\newcounter{mylisti} \newcounter{mylistii}
\newcounter{nest}
\newcommand{\defaultlabel}{}
\newenvironment{mylist}[1]{%
  \addtocounter{nest}{1}
  \ifthenelse{\value{nest}=1}{%
    \renewcommand{\defaultlabel}{(\roman{mylisti})\hfill}}{%
    \renewcommand{\defaultlabel}{(\alph{mylistii})\hfill}}
  \begin{list}{\defaultlabel}{%
      \ifthenelse{\value{nest}=1}{\usecounter{mylisti}}{%
        \usecounter{mylistii}}
      
      \addtolength{\topsep}{1ex}
      \addtolength{\itemsep}{0.5ex}
      \settowidth{\labelwidth}{#1}
      \setlength{\leftmargin}{\labelwidth}
      \addtolength{\leftmargin}{\labelsep}}}{\addtocounter{nest}{-1}
\end{list}}
\newcommand{\bn}{\ensuremath{\mathbb N}}
\newcommand{\br}{\ensuremath{\mathbb R}}
\newcommand{\vx}{\ensuremath{\boldsymbol{x}}}
\newcommand{\vy}{\ensuremath{\boldsymbol{y}}}
\newcommand{\cA}{\ensuremath{\mathcal A}}
\newcommand{\cC}{\ensuremath{\mathcal C}}
\newcommand{\cD}{\ensuremath{\mathcal D}}
\newcommand{\cF}{\ensuremath{\mathcal F}}
\newcommand{\cFt}{\ensuremath{\tilde{\cF}}}
\newcommand{\cG}{\ensuremath{\mathcal G}}
\newcommand{\cGt}{\ensuremath{\tilde{\cG}}}
\newcommand{\cH}{\ensuremath{\mathcal H}}
\newcommand{\cR}{\ensuremath{\mathcal R}}
\newcommand{\cS}{\ensuremath{\mathcal S}}
\newcommand{\vt}{\ensuremath{\tilde{v}}}
\newcommand{\xt}{\ensuremath{\tilde{x}}}
\newcommand{\Ft}{\ensuremath{\tilde{F}}}
\newcommand{\Cb}{\ensuremath{\overline{C}}}
\newcommand{\deltab}{\ensuremath{\bar{\delta}}}
\newcommand{\veb}{\ensuremath{\bar{\ve}}}
\newcommand{\Us}{\ensuremath{U^{(*)}}}
\newcommand{\Zs}{\ensuremath{Z^{(*)}}}
\newcommand{\blder}[1]{\ensuremath{^{(#1)}_{\mathrm{bl}}}} 
\newcommand{\bli}{\ensuremath{\mathrm{I}_{\mathrm{bl}}}} 
\newcommand{\cbder}[1]{\ensuremath{^{(#1)}_{\mathrm{CB}}}} 
\newcommand{\cbi}{\ensuremath{\mathrm{I}_{\mathrm{CB}}}} 
\newcommand{\cof}{\ensuremath{\mathrm{cof}}}
\newcommand{\MAX}{\ensuremath{\mathrm{MAX}}}
\newcommand{\spn}{\ensuremath{\mathrm{span}}}
\newcommand{\supp}{\ensuremath{\mathrm{supp}}}
\newcommand{\sz}{\ensuremath{\mathrm{Sz}}} 
\newcommand{\fin}[1]{{[#1]}^{<\omega}}
\newcommand{\infin}[1]{{[#1]}^{\omega}}
\newcommand{\wder}[1]{\ensuremath{^{(#1)}_{\mathrm{w}}}} 
\newcommand{\wi}{\ensuremath{\mathrm{I}_{\mathrm{w}}}} 
\newcommand{\kcdot}{\!\cdot\!}
\newcommand{\keq}{\!=\!}
\newcommand{\kin}{\!\in\!}
\newcommand{\kge}{\!>\!}
\newcommand{\kgeq}{\!\geq\!}
\newcommand{\khookrightarrow}{\!\hookrightarrow\!}
\newcommand{\kle}{\!<\!}
\newcommand{\kleq}{\!\leq\!}
\newcommand{\kminus}{\!-\!}
\newcommand{\kneq}{\!\neq\!}
\newcommand{\kplus}{\!+\!}
\newcommand{\ksetminus}{\!\setminus\!}
\newcommand{\ksubset}{\!\subset\!}
\newcommand{\ksupset}{\!\supset\!}
\newcommand{\ktimes}{\!\times\!}
\newcommand{\abs}[1]{\lvert #1\rvert}
\newcommand{\norm}[1]{\lVert #1\rVert}
\newcommand{\bignorm}[1]{\big\lVert #1\big\rVert}
\newcommand{\Bignorm}[1]{\Big\lVert #1\Big\rVert}
\newcommand{\tnorm}[1]{\lvert\mspace{-1mu}\lvert\mspace{-1mu}\lvert
  #1\rvert\mspace{-1mu}\rvert\mspace{-1mu}\rvert}
\newcommand{\co}{\mathrm{c}_0}
\newcommand{\coo}{\mathrm{c}_{00}}
\newcommand{\V}{\forall \,}
\newcommand{\E}{\exists \,}
\newcommand{\ve}{\varepsilon}
\newcommand{\di}[3]{\text{\makebox[0pt][l]{${#1}^{#2}$}}{#1}_{#3}}
\newcommand{\ds}{\displaystyle}
\newcommand{\ie}{\textit{i.e.,}\ }
\newtheorem{thm}{Theorem}
\newtheorem{mainthm}{Theorem}
\newtheorem{lem}[thm]{Lemma}
\newtheorem{prop}[thm]{Proposition}
\newtheorem{cor}[thm]{Corollary}
\newtheorem*{problem}{Problem}
\theoremstyle{definition}
\newtheorem*{defn}{Definition}
\theoremstyle{remark}
\newtheorem*{rem}{Remark}
\newtheorem*{exs}{Examples}
\newcommand{\odd}{\ensuremath{T_\infty^{\text{even}}}}
\newcommand{\ms}{\ensuremath{\text{ms}}}
\newcommand{\sder}[1]{\ensuremath{^{(#1)}_S}} 
\newcommand{\si}{\ensuremath{\mathrm{I}_S}} 
\author{E. Odell, Th. Schlumprecht and A.~Zs\'ak}
\thanks{Research of the first two authors was supported by the
  National Science Foundation}
\title[bounded Szlenk index]{Banach spaces of bounded Szlenk index}
\subjclass[2000]{46B20, 54H05}
\keywords{Szlenk index, universal space, embedding into FDDs,
  Effros-Borel structure, analytic classes}
\begin{document}

\begin{abstract}
  For a countable ordinal $\alpha$ we denote by $\cC_\alpha$ the class
  of separable, reflexive Banach spaces whose Szlenk index and the
  Szlenk index of their dual are bounded by $\alpha$. We show that
  each $\cC_\alpha$ admits a separable, reflexive universal space. We
  also show that spaces in the class
  $\cC_{\omega^{\alpha\cdot\omega}}$ embed into spaces of
  the same class with a basis. As a consequence we deduce that
  each $\cC_\alpha$ is analytic in the Effros-Borel structure of
  subspaces of $C[0,1]$.
\end{abstract}

\maketitle

\section{Introduction}

A well known result that dates back to the early days of Banach space
theory~\cite[Th\'eor\`eme 9, page 185]{Banach:32} states that every
separable Banach space embeds into $C[0,1]$, \ie that $C[0,1]$ is
universal for the class of all separable Banach
spaces. Pe{\l}czy\'nski~\cite{Pelczynski:69} refined this result by
showing that there are Banach spaces $X$ with a basis and $X_u$ with
an unconditional basis such that every space with a basis or with an
unconditional basis is isomorphic to a complemented subspace of $X$ or
$X_u$, respectively.

By a famous result of Szlenk~\cite{Szlenk:68}, there is no separable
reflexive Banach space $X$ which contains isomorphically all separable
reflexive Banach spaces. Bourgain~\cite{Bourgain:80} sharpened this
result by showing that a separable Banach space which contains all
separable reflexive Banach spaces must contain $C[0,1]$ and, thus, all
separable Banach spaces. Szlenk proved his result by introducing for a
Banach space $X$ an ordinal-index $\sz(X)$ (see section 5) which is
countable if and only if $X$ has a separable dual and is hereditary
(if $Y$ embeds in $X$, then $\sz(Y)\kleq\sz(X)$). Moreover he showed
that for any countable ordinal $\alpha$ there is a separable reflexive
space $X$ for which $\sz(X)\kge\alpha$. Bourgain achieved his result
by introducing an index which measures how well finite sections of the
Schauder basis of $C[0,1]$ embed in $X$, and then he proved statements
analogous to Szlenk's approach.

Bourgain then raised the question whether there is a separable,
reflexive space that contains isomorphically all uniformly convex
Banach spaces. This problem was solved recently by the first two
authors. It was proven in~\cite{OS:06b} that if $X$ is a separable,
uniformly convex Banach space, then there exist $1\kle q\kleq
p\kle\infty$ and a reflexive space $Z$ with an FDD (finite-dimensional
decomposition) $(E_n)$ so that $X$ embeds into $Z$ and $(E_n)$
satisfies block $(\ell_p,\ell_q)$-estimates. This means that for some
$C$ we have
\[
C^{-1} \Big( \sum \norm{z_i}^p\Big)^{1/p} \leq \Bignorm{\sum z_i} \leq
C\Big( \sum \norm{z_i}^q\Big)^{1/q}
\]
for any block sequence $(z_i)$ of $(E_n)$. In fact this holds
(see~\cite[Theorems~3.4 and~4.1]{OS:06a}) if we merely know that
\[
X\in \cC_\omega = \big\{ Y:\,Y\text{ is separable, reflexive,
}\sz(Y)\kleq\omega,\ \sz(Y^*)\kleq\omega \big\}\ .
\]
Using then a result of S.~Pruss~\cite{Prus:83}, who solved Bourgain's
question within the class of Banach spaces with FDDs, we deduce that
there exists a universal reflexive space $Z$ for the class
$\cC_\omega$, and in fact $Z\kin \cC_{\omega^2}$.

Inspired by the results of~\cite{OS:06b}, A.~Pe{\l}czy\'nski  raised
the question whether a similar result could be proved for the
classes $\cC_\alpha$, where $\alpha\kle\omega_1$ and where
$\cC_\alpha$ is defined analogously to the class $\cC_\omega$. This
would mean that, although the class of separable, reflexive spaces has
no universal element, it is the (necessarily uncountable) increasing
union of classes which are closed under taking duals, and for which
universal separable, reflexive spaces do exist. In this paper we
answer this in the affirmative.

As in the proof in~\cite{OS:06b}, we will reduce the universality
problem to an embedding problem. We will show that any member of
$\cC_\alpha$, $\alpha\kle\omega_1$, embeds into some element of
$\cC_\beta$ which has a basis, where $\alpha\kleq\beta\kle\omega_1$
depends on $\alpha$. This result can be seen as a quantitative version
of Zippin's seminal theorem~\cite{Zippin:88} that every separable
reflexive space embeds into one with a basis. In light of our
embedding result we then only need to show that the class of elements
in $\cC_\alpha$ with a basis admits a universal separable, reflexive
space.

Our approach depends on showing that if $X$ is a space with separable
dual and if $\sz(X)\kleq \omega^{\alpha\cdot\omega}$ for some
$\alpha\kle\omega_1$, then $X$ satisfies ``subsequential upper
$T_{\alpha,c}$ estimates'', where $T_{\alpha,c}$ is the Tsirelson
space defined by the Schreier class $\cS_\alpha$ and a parameter
$c\kin (0,1)$ (the definitions of $\cS_\alpha$ and $T_{\alpha,c}$ will
be recalled in Section~\ref{section:tsirelson}). Subsequential upper
$T_{\alpha,c}$ estimates can be expressed in terms of a game played as
follows. Player~(I) starts by choosing $X_1\kin\cof(X)$, the set of
all finite-codimensional subspaces of $X$, and an integer
$k_1\kin\bn$. Player~(II) then responds by selecting $x_1\kin
S_{X_1}$, the unit sphere of $X_1$. Then~(I) chooses $X_2\kin\cof(X)$
and $k_2\kin\bn$, and~(II) chooses $x_2\kin S_{X_2}$, etc. $X$
satisfies subsequential upper $T_{\alpha,c}$ estimates if for some
$C\kle\infty$ Player~(I) has a winning strategy to force~(II) to
select $(x_i)$ satisfying
\[
\Bignorm{\sum a_ix_i}_X\leq C\Bignorm{\sum a_it_{k_i}}_{T_{\alpha,c}}
\]
for all $(a_i)\ksubset\br$, where $(t_i)$ is the unit vector basis of
$T_{\alpha,c}$. These games  are a variation of the games introduced
in~\cite{OS:02} and were defined and analysed in~\cite{OSZ2}. Using
the results therein we ultimately prove the following structure
theorem.

\begin{mainthm}
  \label{mainthm:structural}
  Let $\alpha\kle\omega_1$. For a separable, reflexive space $X$ the
  following are equivalent.
  \begin{mylist}{(ii)}
  \item
    $X\kin\cC_{\omega^{\alpha\cdot\omega}}$.
  \item
    $X$ embeds into a separable, reflexive space $Z$ with an FDD
    $(E_i)$ which satisfies subsequential
    $(\di{T}{*}{\alpha,c},T_{\alpha,c})$ estimates in $Z$ for some
    $c\kin (0,1)$.
  \end{mylist}  
\end{mainthm}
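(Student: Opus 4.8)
The plan is to establish the equivalence by proving the two implications separately, with (ii)$\Rightarrow$(i) being the softer direction and (i)$\Rightarrow$(ii) carrying the main weight.

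For (ii)$\Rightarrow$(i): assume $X$ embeds into $Z$ with an FDD $(E_i)$ satisfying subsequential $(\di{T}{*}{\alpha,c},T_{\alpha,c})$ estimates. First I would recall from Section~\ref{section:tsirelson} the Szlenk-index bounds for $T_{\alpha,c}$ and its dual; the Tsirelson space $T_{\alpha,c}$ built from the Schreier family $\cS_\alpha$ has $\sz(T_{\alpha,c})$ essentially governed by the index of $\cS_\alpha$, which is $\omega^\alpha$, and iterating the block estimates along an FDD multiplies indices in the ordinal sense, pushing the bound up to $\omega^{\alpha\cdot\omega}$. More precisely, the upper $T_{\alpha,c}$ estimate forces $\sz(Z)\kleq\omega^{\alpha\cdot\omega}$ and the upper $\di{T}{*}{\alpha,c}$ estimate (equivalently a lower $T_{\alpha,c}$-type estimate on $Z$, hence an upper estimate on $Z^*$) forces $\sz(Z^*)\kleq\omega^{\alpha\cdot\omega}$. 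Since the Szlenk index is hereditary and $X\khookrightarrow Z$, and since $Z$ is reflexive (so $X^*$ is a quotient of $Z^*$, whose Szlenk index dominates that of any quotient), we get $X\kin\cC_{\omega^{\alpha\cdot\omega}}$. This direction is essentially bookkeeping with the index calculus already developed.

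For (i)$\Rightarrow$(ii): this is where the real work lies, and I would split it into three stages. \textbf{Stage 1:} show that $X\kin\cC_{\omega^{\alpha\cdot\omega}}$ implies $X$ satisfies subsequential upper $T_{\alpha,c}$ estimates in the game-theoretic sense described in the introduction, for a suitable $c\kin(0,1)$. The idea is to run a Szlenk-index argument: because $\sz(X)\kleq\omega^{\alpha\cdot\omega}$, weak-$*$ closures of slices thin out in a way controlled by the Schreier family $\cS_\alpha$, and this lets Player~(I), by choosing the finite-codimensional subspaces $X_i$ to kill the ``large'' weak-$*$ functionals and choosing the $k_i$ appropriately, force any admissible vector sequence $(x_i)$ to obey the $T_{\alpha,c}$ upper bound. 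Simultaneously, applying the same reasoning to $X^*$ (using $\sz(X^*)\kleq\omega^{\alpha\cdot\omega}$ and reflexivity) gives subsequential lower estimates of the corresponding dual type, i.e.\ upper $\di{T}{*}{\alpha,c}$ estimates on $X^*$. \textbf{Stage 2:} invoke the embedding machinery of~\cite{OSZ2}: a separable reflexive space satisfying subsequential upper $T_{\alpha,c}$ estimates \emph{and} subsequential upper $\di{T}{*}{\alpha,c}$ estimates (on the dual) embeds into a reflexive space $Z$ with an FDD $(E_i)$ that itself satisfies these estimates as block estimates — this is precisely the ``quantitative Zippin'' content, and it is exactly the kind of conclusion the games of~\cite{OS:02, OSZ2} are designed to produce. \textbf{Stage 3:} verify that the resulting $Z$ is genuinely reflexive and that the two one-sided block estimates combine to the two-sided subsequential $(\di{T}{*}{\alpha,c},T_{\alpha,c})$ estimate in the statement.

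The main obstacle is Stage~1: extracting, from the purely ordinal hypothesis $\sz(X)\kleq\omega^{\alpha\cdot\omega}$, a \emph{quantitative} combinatorial control — a winning strategy for Player~(I) that produces a uniform constant $C$ and a fixed parameter $c$ — rather than a merely asymptotic statement. This requires carefully relating the $\alpha\cdot\omega$-th derivation of weak-$*$ slices to the Schreier families $(\cS_\alpha)$ at \emph{each finite level}, controlling how the constant degrades as one iterates, and choosing $c$ small enough (depending on $\alpha$ but not on $X$) to absorb that degradation. The transfinite induction on $\alpha$ — in particular handling limit ordinals, where $\cS_\alpha$ is defined by a diagonal construction — is the delicate point, and is presumably where the hypotheses of~\cite{OSZ2} on the Schreier families are brought to bear.
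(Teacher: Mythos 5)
Your overall architecture does coincide with the paper's: the easy direction rests on the Szlenk-index computations for FDDs with Tsirelson upper estimates (Propositions~\ref{prop:tsirelson-szlenk} and~\ref{prop:upper-tsirelson-upper-szlenk}) plus a duality argument, and the hard direction is split into first extracting coordinate-free subsequential $(\di{T}{*}{\alpha,c},T_{\alpha,c})$ estimates for $X$ and then invoking the embedding theorem of~\cite{OSZ2} (Theorem~\ref{thm:universal}); this is exactly how the paper proceeds in Theorem~\ref{thm:tight-ta}. (In your (ii)$\Rightarrow$(i) you use monotonicity of the Szlenk index under quotients, which the paper never states --- it instead dualizes the tree estimates via Proposition~\ref{prop:tree-est-duality} and~\cite[Corollary~14]{OSZ2} --- but that is a standard fact and harmless.) The genuine gap is your Stage~1, which is where the entire content of the theorem lies and for which you offer only the heuristic that weak-$*$ slices ``thin out in a way controlled by $\cS_\alpha$''. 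The actual passage from $\sz(X)\kleq\omega^{\alpha\cdot\omega}$ to upper Tsirelson estimates is Theorem~\ref{thm:main} together with Corollaries~\ref{cor:szlenk-upper-est} and~\ref{cor:szlenk-lower-upper-est}: one embeds $X$ into a space with a shrinking bimonotone FDD (Zippin), expresses $\sz(X)$ through the weak index of the $\ell_1$-type trees $\cF_\rho$ (Theorem~\ref{thm:szlenk=weak}, \cite{AJO}), transfers to block trees, compresses them to hereditary families in $\fin{\bn}$ and compares Cantor--Bendixson indices (Propositions~\ref{prop:fat-index} and~\ref{prop:compression}), applies the Pudl\'ak--R\"odl theorem (Theorem~\ref{thm:thin-ramsey}) to avoid $\MAX(\cF_\gamma)$ along a subsequence, and finally runs an induction on the size of the support to obtain domination by the basis of $T_{\cF_\gamma,\frac12}$ with a \emph{universal} constant, for some ordinal $\gamma\kle\sz(X)$. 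Nothing in this resembles a transfinite induction on $\alpha$ in which constants degrade and are absorbed into the parameter $c$.

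Moreover, your description of how $c$ enters is backwards and contains an overclaim. In the paper the constant never degrades; what the argument produces is an ordinal $\gamma\kle\omega^{\alpha\cdot\omega}$, hence $\gamma\kleq\omega^{\alpha\cdot n}$ for some finite $n$ \emph{depending on $X$}, and one then uses the elementary equivalence of the norms of $T_{\alpha\cdot n}$ (parameter $\frac12$) and $T_{\alpha,c}$ with $c\keq 2^{-1/n}$. Thus $c$ is taken \emph{close to $1$}, not small: decreasing $c$ makes the Tsirelson norm smaller and hence makes the upper estimate you are trying to prove a stronger statement, whereas the loss coming from needing $\cS_{\alpha\cdot n}$-admissibility instead of $\cS_\alpha$-admissibility is compensated precisely by raising $c$ so that $c^n\keq\frac12$. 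Finally, your parenthetical claim that $c$ can be chosen depending only on $\alpha$ and not on $X$ is not available: if every $X\kin\cC_{\omega^{\alpha\cdot\omega}}$ satisfied subsequential $(\di{T}{*}{\alpha,c},T_{\alpha,c})$ tree estimates for one fixed $c\keq c(\alpha)$ (with the constant allowed to depend on $X$), then Theorem~\ref{thm:universal} with $U\keq T_{\alpha,c}$ and $V\keq U^*$, combined with the (iii)$\Rightarrow$(i) argument, would yield a universal element of $\cC_{\omega^{\alpha\cdot\omega}}$ lying in that same class --- which is precisely the paper's first open Problem. So Stage~1 is not merely left unproved; the mechanism you propose for carrying it out would not work as stated.
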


In part~(ii) ``subsequential $(\di{T}{*}{\alpha,c},T_{\alpha,c})$
estimates'' mean the following: there exists $C\kle\infty$ such that
if $(z_i)$ is a block sequence of $(E_n)$ with $\min\supp(z_i)\keq
k_i$, then
\[
C^{-1} \Bignorm{\sum \norm{z_i} t_{k_i}^*}_{\di{T}{*}{\alpha,c}} \leq
\Bignorm{\sum z_i} \leq C\Bignorm{\sum \norm{z_i}
  t_{k_i}}_{T_{\alpha,c}}\ .
\]
Of course the implication ``(ii)$\Rightarrow$(i)'' shows that the
space $Z$ lies in the same class $\cC_{\omega^{\alpha\cdot\omega}}$ as
does $X$.

Roughly we have that the Tsirelson spaces $T_{\alpha,c}$ of order
$\alpha$ form a sort of upper envelope and their duals
$\di{T}{*}{\alpha,c}$ form a lower envelope for the entire class
$\cC_{\omega^{\alpha\cdot\omega}}$. Moreover, since the spaces
$T_{\alpha,c}$ also belong to the class
$\cC_{\omega^{\alpha\cdot\omega}}$, this result is best possible.

From Theorem~\ref{mainthm:structural} and the main result
of~\cite{OSZ2} (see Theorem~\ref{thm:universal} below) we will then
deduce the following embedding and universality result.

\begin{mainthm}
  \label{mainthm:embedding}
  For each $\alpha\kle\omega_1$ every Banach space in the class
  $\cC_{\omega^{\alpha\cdot\omega}}$ embeds into a space $Z$ with a
  basis that lies in the same class $\cC_{\omega^{\alpha\cdot\omega}}$
  as does $X$.
\end{mainthm}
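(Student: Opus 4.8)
The plan is to derive Theorem~\ref{mainthm:embedding} as a formal consequence of Theorem~\ref{mainthm:structural} together with the universality result of~\cite{OSZ2} (quoted here as Theorem~\ref{thm:universal}). First I would take $X\kin\cC_{\omega^{\alpha\cdot\omega}}$ and apply the implication ``(i)$\Rightarrow$(ii)'' of Theorem~\ref{mainthm:structural}: this produces a separable, reflexive space $W$ with an FDD $(E_i)$ satisfying subsequential $(\di{T}{*}{\alpha,c},T_{\alpha,c})$ estimates for some $c\kin(0,1)$, such that $X$ embeds into $W$. The point of recording the estimates in \emph{both} directions is exactly this: the lower $\di{T}{*}{\alpha,c}$ estimate and the upper $T_{\alpha,c}$ estimate are what will be preserved when we pass from the FDD to a basis, and they are also what forces the ambient space back into $\cC_{\omega^{\alpha\cdot\omega}}$ via ``(ii)$\Rightarrow$(i)''.

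Next I would invoke the FDD-to-basis machinery. A space with an FDD satisfying subsequential two-sided $(V,U)$ estimates (with $U$, $V$ having suitable properties — here $U=T_{\alpha,c}$, $V=\di{T}{*}{\alpha,c}$) embeds into a space $Z$ with a \emph{basis} that again satisfies subsequential $(\di{T}{*}{\alpha,c},T_{\alpha,c})$ estimates; this is precisely the content of the relevant theorem from~\cite{OSZ2}, and the construction is the standard one of interpolating a basis into the FDD-blocks while controlling the two new estimates. Concretely, one would cite Theorem~\ref{thm:universal} (the universality/embedding statement of~\cite{OSZ2}) to obtain a single space $Z$ with a basis satisfying subsequential $(\di{T}{*}{\alpha,c},T_{\alpha,c})$ estimates into which every space with an FDD satisfying such estimates embeds; in particular $W$, and hence $X$, embeds into $Z$.

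Finally I would close the loop on the class membership: since $Z$ has a basis satisfying subsequential $(\di{T}{*}{\alpha,c},T_{\alpha,c})$ estimates, condition~(ii) of Theorem~\ref{mainthm:structural} holds for $Z$ (it embeds into itself), so by ``(ii)$\Rightarrow$(i)'' we conclude $Z\kin\cC_{\omega^{\alpha\cdot\omega}}$. Thus $X$ embeds into a space $Z$ with a basis lying in the same class $\cC_{\omega^{\alpha\cdot\omega}}$, which is exactly the assertion. (Strictly, Theorem~\ref{mainthm:structural} gives reflexivity of $Z$ as part of~(i), so no separate argument is needed there.)

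The genuinely hard part is \emph{not} in this deduction — it is entirely in Theorem~\ref{mainthm:structural}, specifically the implication ``(i)$\Rightarrow$(ii)'', which requires showing that $\sz(X)\kleq\omega^{\alpha\cdot\omega}$ yields subsequential upper $T_{\alpha,c}$ estimates (the Szlenk-index-to-game argument, via the analysis of the games of~\cite{OSZ2}), and symmetrically for $X^*$ to obtain the lower $\di{T}{*}{\alpha,c}$ estimate after passing to an appropriate FDD. Granting Theorems~\ref{mainthm:structural} and~\ref{thm:universal}, the proof of Theorem~\ref{mainthm:embedding} is the short chain above; the only mild subtlety to be careful about is that a \emph{single} value of the parameter $c$ (and a single space $Z$) works for all $X$ in the class, which is guaranteed by the universality assertion in Theorem~\ref{thm:universal} rather than by Theorem~\ref{mainthm:structural} alone.
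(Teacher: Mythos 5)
There is a genuine gap at the central step: the passage from an FDD to a basis. Theorem~\ref{thm:universal} does \emph{not} produce a space with a basis; its conclusion is that the universal space $Z$ has a bimonotone \emph{FDD} satisfying subsequential $\Cb$-$(V,U)$ estimates. Your assertion that the ``relevant theorem from \cite{OSZ2}'' yields a space with a \emph{basis} again satisfying subsequential $(\di{T}{*}{\alpha,c},T_{\alpha,c})$ estimates, with the construction being a ``standard'' interpolation of a basis into the FDD-blocks, is exactly the point that is neither contained in the cited theorem nor established anywhere in the paper. The paper bridges this gap differently: it first proves the FDD version (Theorem~\ref{thm:universal-ca2}, producing spaces $Z_{\alpha,n}\kin\cC_{\omega^{\alpha\cdot\omega}}$ with FDDs), and then applies Johnson's theorem (Theorem~\ref{thm:johnson}): since a space with an FDD has the bounded approximation property, $Z_{\alpha,n}\oplus C_2$ has a basis, where $C_2$ is an $\ell_2$-sum of finite-dimensional spaces. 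Crucially, the class membership of $Z_{\alpha,n}\oplus C_2$ is then \emph{not} verified via subsequential estimates on the new basis (no such estimates are claimed for it), but by computing the Szlenk index of the direct sum using Propositions~\ref{prop:szlenk-sum} and~\ref{prop:szlenk-ell2-sum}, which give $\sz(Z_{\alpha,n}\oplus C_2)\keq\omega^{\alpha\cdot\omega}$ (and similarly for the dual, together with reflexivity of the sum). Without either this route or an actual proof that one can build a basis retaining the two-sided estimates, your ``close the loop via (ii)$\Rightarrow$(i) applied to $Z$'' step has nothing to apply to.

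A second, related error is your final remark that a \emph{single} space $Z$ (and a single $c$) works for all $X$ in the class, ``guaranteed by the universality assertion in Theorem~\ref{thm:universal}.'' Theorem~\ref{thm:universal} is universality for a \emph{fixed} pair $(V,U)$, i.e.\ a fixed $c$; but in Theorem~\ref{mainthm:structural} (equivalently Theorem~\ref{thm:tight-ta}, via Corollary~\ref{cor:szlenk-lower-upper-est}) the parameter $c$ depends on $X$ (it arises as $c\keq 2^{-1/n}$ with $n$ depending on $X$), so different members of $\cC_{\omega^{\alpha\cdot\omega}}$ may require different Tsirelson spaces $T_{\alpha,c}$. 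This is precisely why the paper's universal space is the $\ell_2$-sum of the sequence $(Z_{\alpha,n})_n$ and lands only in $\cC_{\omega^{\alpha\cdot\omega+1}}$ (Theorem~\ref{mainthm:universal}); whether a universal element exists \emph{inside} $\cC_{\omega^{\alpha\cdot\omega}}$ is posed as an open problem, and the remark after Theorem~\ref{thm:universal-ca} (elastic spaces, Johnson--Odell) even rules out a uniform-constant version. Fortunately Theorem~\ref{mainthm:embedding} does not need a single $Z$: the space with a basis is allowed to depend on $X$, as it does in the paper's proof ($X$ embeds into $Z_{\alpha,n}\oplus C_2$ with $n$ depending on $X$).
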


\begin{mainthm}
  \label{mainthm:universal}
  For each $\alpha\kle\omega_1$ there is an element of
  $\cC_{\omega^{\alpha\cdot\omega+1}}$ with a basis which is universal
  for the class $\cC_{\omega^{\alpha\cdot\omega}}$.
\end{mainthm}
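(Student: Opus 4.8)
The plan is to combine the embedding result of Theorem~B with a universality statement for spaces with FDDs satisfying subsequential estimates, which is exactly the content of the result of~\cite{OSZ2} quoted as Theorem~\ref{thm:universal} below. First I would recall that by Theorem~\ref{mainthm:structural} a separable reflexive $X$ lies in $\cC_{\omega^{\alpha\cdot\omega}}$ if and only if $X$ embeds into some reflexive $Z$ with an FDD $(E_i)$ satisfying subsequential $(\di{T}{*}{\alpha,c},T_{\alpha,c})$ estimates (for some $c\kin(0,1)$ depending on $X$). The first genuine issue is the dependence of the parameter $c$ on $X$: a single universal space must work for \emph{all} members of the class simultaneously. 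I would handle this by the standard observation that $T_{\alpha,c'}$ finitely block-represents into $T_{\alpha,c}$ when $c'\kgeq c$ up to constants, equivalently that subsequential $T_{\alpha,c}$ estimates imply subsequential $T_{\alpha,c'}$ estimates for $c'\kgeq c$, so it suffices to fix one value of $c$ (say $c=1/2$), at the cost of only an absolute constant; alternatively one invokes the version of Theorem~\ref{thm:universal} in~\cite{OSZ2} that already has this uniformization built in.

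Next I would feed the class of FDDs satisfying subsequential $(\di{T}{*}{\alpha,c},T_{\alpha,c})$ estimates into the universality machine of~\cite{OSZ2}. That machine produces a single reflexive space $Z_\alpha$ with a basis such that every reflexive space with an FDD satisfying these subsequential estimates embeds into $Z_\alpha$, and such that $Z_\alpha$ itself still satisfies subsequential upper $T_{\alpha,c}$ estimates (the lower $\di{T}{*}{\alpha,c}$ estimate costs one level in the Szlenk index of the dual, which is why the index goes up by exactly one power of $\omega$). Combining this with Theorem~\ref{mainthm:embedding}, every $X\kin\cC_{\omega^{\alpha\cdot\omega}}$ first embeds into such a $Z$ and then $Z$ embeds into $Z_\alpha$, so $Z_\alpha$ is universal for $\cC_{\omega^{\alpha\cdot\omega}}$.

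It remains to pin down the Szlenk index bound, i.e.\ to check $Z_\alpha\kin\cC_{\omega^{\alpha\cdot\omega+1}}$. Here I would use the Szlenk-index computations for Tsirelson spaces of order $\alpha$: one has $\sz(T_{\alpha,c})\keq\omega^{\alpha\cdot\omega}$ while $\sz(\di{T}{*}{\alpha,c})$ is one power of $\omega$ larger, and subsequential upper $T_{\alpha,c}$ estimates on an FDD transfer the Szlenk bound $\sz(Z_\alpha)\kleq\omega^{\alpha\cdot\omega}$, whereas the lower $\di{T}{*}{\alpha,c}$ estimate only gives $\sz(Z_\alpha^*)\kleq\omega^{\alpha\cdot\omega+1}$. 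Taking the larger of the two yields $Z_\alpha\kin\cC_{\omega^{\alpha\cdot\omega+1}}$. The main obstacle I anticipate is not the embedding step (which is Theorem~B) but getting the Szlenk index of the universal space exactly right: one must run the Szlenk-index estimates for blockings of FDDs with subsequential $T_{\alpha,c}$ / $\di{T}{*}{\alpha,c}$ estimates carefully enough to see that the dual index is at most $\omega^{\alpha\cdot\omega+1}$ and not larger, and that the bound is in fact attained by the construction so the result is sharp. For $\alpha\keq 0$ this recovers the known fact that there is a universal space in $\cC_{\omega^2}$ for $\cC_\omega\keq\cC_{\omega^{0\cdot\omega}}$, consistent with the discussion in the introduction, which is a useful sanity check.
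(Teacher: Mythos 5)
Your overall architecture (tree-estimate characterization, then the universality machine of \cite{OSZ2}, then Szlenk-index bookkeeping) is the paper's, but the step on which everything hinges --- the uniformization over the parameter $c$ --- is wrong, and the way you propose to fix it cannot work. The monotonicity you invoke goes the wrong way: since $\norm{\cdot}_{T_{\alpha,c}}$ increases with $c$, subsequential upper $T_{\alpha,c}$ estimates become \emph{weaker} as $c\to 1$, so your observation that $T_{\alpha,c}$-estimates imply $T_{\alpha,c'}$-estimates for $c'\kgeq c$ only lets you move $c$ \emph{up}, never down to a fixed value such as $c\keq\frac12$. A space $X\kin\cC_{\omega^{\alpha\cdot\omega}}$ is only guaranteed (Theorem~\ref{thm:tight-ta}, via Corollary~\ref{cor:szlenk-lower-upper-est}) to satisfy $(\di{T}{*}{\alpha,c},T_{\alpha,c})$ tree estimates for \emph{some} $c$, equivalently $(\di{T}{*}{\alpha\cdot n},T_{\alpha\cdot n})$ estimates for some $n$ (using $\norm{\cdot}_{T_{\alpha,2^{-1/n}}}\sim\norm{\cdot}_{T_{\alpha\cdot n}}$), and the required $c$ (resp.\ $n$) genuinely tends to $1$ (resp.\ $\infty$) as one exhausts the class; there is no version of Theorem~\ref{thm:universal} with this uniformization built in. Indeed, if your reduction to a single $c$ were valid, one application of Theorem~\ref{thm:universal} with $U\keq T_\alpha$, $V\keq\di{T}{*}{\alpha}$ would produce a universal space lying in $\cC_{\omega^{\alpha\cdot\omega}}$ itself, which is exactly the problem stated as open in the Introduction. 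The paper instead applies Theorem~\ref{thm:universal} once for each $n$, obtaining spaces $Z_{\alpha,n}\kin\cC_{\omega^{\alpha\cdot\omega}}$ with FDDs satisfying subsequential $(\di{T}{*}{\alpha\cdot n},T_{\alpha\cdot n})$ estimates, each universal (with a uniform constant $K$) for the subclass $\cA_{\alpha\cdot n}(C)$, and sets $Z_\alpha\keq\big(\bigoplus_n Z_{\alpha,n}\big)_{\ell_2}$ (Theorem~\ref{thm:universal-ca2}).

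This also corrects your account of where the exponent $+1$ comes from. It is not that the lower $\di{T}{*}{\alpha,c}$ estimate costs a power of $\omega$ on the dual side: $\di{T}{*}{\alpha}$ satisfies subsequential $T_\alpha$-upper estimates, so $T_\alpha$ and its dual both lie in $\cC_{\omega^{\alpha\cdot\omega}}$, and by the duality argument in the proof of Theorem~\ref{thm:universal-ca} the dual FDD $(E_i^*)$ of each $Z_{\alpha,n}$ again satisfies subsequential $(\di{T}{*}{\alpha\cdot n},T_{\alpha\cdot n})$ estimates, whence Proposition~\ref{prop:upper-tsirelson-upper-szlenk} gives $\max\{\sz(Z_{\alpha,n}),\sz(\di{Z}{*}{\alpha,n})\}\kleq\omega^{\alpha\cdot\omega}$. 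The $+1$ enters only because $Z_\alpha$ is an \emph{infinite} $\ell_2$-sum over $n$, via Proposition~\ref{prop:szlenk-ell2-sum}. Finally, Theorem~\ref{thm:universal} produces a space with an FDD, not a basis; your proposal silently assumes a basis. The paper needs the additional step of Johnson's theorem (Theorem~\ref{thm:johnson}): $Z_\alpha\oplus C_2$ has a basis, and Propositions~\ref{prop:szlenk-sum} and~\ref{prop:szlenk-ell2-sum} show this direct sum stays in $\cC_{\omega^{\alpha\cdot\omega+1}}$. With these three repairs (indexing by $n$ instead of a fixed $c$, the $\ell_2$-sum as the source of the $+1$, and the FDD-to-basis step) your outline becomes the paper's proof.
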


Our structural result, Theorem~\ref{mainthm:structural}, will be
proved in Section~\ref{section:main} together with some more general
structural results (Theorem~\ref{thm:tight-ta},
Corollary~\ref{cor:szlenk-lower-upper-est}). We will then deduce
\emph{FDD versions} (see Theorem~\ref{thm:universal-ca2}) of our
embedding result, Theorem~\ref{mainthm:embedding}, and our universality
result, Theorem~\ref{mainthm:universal}. A result of
W.~B.~Johnson~\cite{Johnson:71} allows us to replace FDD's by bases in
our conclusions (see Theorem~\ref{thm:johnson} and the proof of
Theorems~\ref{mainthm:embedding} and~\ref{mainthm:universal}
thereafter).
In Sections~\ref{section:embedding}
to~\ref{section:szlenk} we present the relevant notation and
background material: the embedding theorem from~\cite{OSZ2}, Tsirelson
spaces, general ordinal indices and the Szlenk index. Each of these
sections begins with a brief summary of its contents. 

There is a completely different approach to universality problems that
uses tools of descriptive set theory. There have been some remarkable
achievements in Banach space theory using such techniques including
solutions of universality problems~\cite{AD,DF}. However, in order to
tackle Pe\l czy\'nski's question with this approach, one would need
the classes $\cC_\alpha$ to be analytic and this was not known. Our
results, however, now do show the following.

\begin{mainthm}
  \label{mainthm:analytic}
  For each countable ordinal $\alpha$ the class $\cC_\alpha$ is
  analytic in the Effros-Borel structure of closed subspaces of
  $C[0,1]$.
\end{mainthm}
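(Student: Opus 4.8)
We plan to derive Theorem~\ref{mainthm:analytic} from the universality result Theorem~\ref{mainthm:universal} together with standard facts from the descriptive set theory of Banach spaces. Write $\mathrm{SB}$ for the standard Borel space of closed subspaces of $C[0,1]$ with its Effros--Borel structure, and recall that it comes equipped with Borel maps $X\mapsto d_n(X)$ picking a dense sequence in each $X$. Fix a countable ordinal $\alpha$. The governing difficulty is that reflexivity is \emph{not} an analytic property; so we must find an analytic set which both contains $\cC_\alpha$ and consists only of reflexive spaces, and this is exactly what the universal space provides. If $X\kin\cC_\alpha$ then $X$ is separable and reflexive with $\sz(X),\sz(X^*)\kleq\alpha\kleq\omega^{\alpha\cdot\omega}$, so $X\kin\cC_{\omega^{\alpha\cdot\omega}}$; hence, by Theorem~\ref{mainthm:universal}, $X$ embeds isomorphically into the fixed, separable, reflexive, basis-having space $U\kin\cC_{\omega^{\alpha\cdot\omega+1}}$ which is universal for $\cC_{\omega^{\alpha\cdot\omega}}$. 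Since $U$ is reflexive, $X\khookrightarrow U$ in particular forces $X$ to be separable and reflexive.

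The key claim is the equivalence
\[
X\kin\cC_\alpha \iff X\khookrightarrow U \ \text{ and }\ \sz(X)\kleq\alpha \ \text{ and }\ \E Z\kin\mathrm{SB}\ \big(\Phi(X,Z)\ \text{and}\ \sz(Z)\kleq\alpha\big),
\]
where $\Phi(X,Z)$ is the relation: there is a bounded bilinear form $B\colon X\ktimes Z\to\br$ and a constant $C$ with $C^{-1}\norm{x}\kleq\sup_{\norm{z}\kleq 1}\abs{B(x,z)}\kleq C\norm{x}$ for all $x\kin X$, and $C^{-1}\norm{z}\kleq\sup_{\norm{x}\kleq 1}\abs{B(x,z)}\kleq C\norm{z}$ for all $z\kin Z$. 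Such a perfect pairing makes $z\mapsto B(\cdot,z)$ an isomorphism of $Z$ onto a norming, hence weak$^*$-dense, closed subspace of $X^*$, which equals all of $X^*$ \emph{as soon as $X$ is reflexive}; conversely the canonical duality pairing witnesses $\Phi(X,X^*)$. Thus for reflexive $X$, $\Phi(X,Z)$ holds exactly when $Z\cong X^*$. Granting this, the forward direction of the claim takes $Z\kin\mathrm{SB}$ isomorphic to the separable space $X^*$ (which lies in $\cC_\alpha$ by closure under duality, so $\sz(Z)\keq\sz(X^*)\kleq\alpha$); the reverse direction reads off from $X\khookrightarrow U$ that $X$ is separable and reflexive, from $\sz(X)\kleq\alpha$ and from $\sz(X^*)\keq\sz(Z)\kleq\alpha$ that $X\kin\cC_\alpha$.

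It remains to check the right-hand side is analytic. That $X\khookrightarrow U$ (for the fixed separable $U$) is $\Sigma^1_1$ is classical: it is witnessed by a bounded-below operator, equivalently by rational data on the selectors $(d_n(X))$ and a sequence in $U$. The relation $\Phi$ is $\Sigma^1_1$ in $(X,Z)$ for the same reason — an existential quantifier over the Polish space of bounded bilinear forms followed by countably many rational inequalities evaluated on the selectors. Finally, by a standard fact (the Szlenk index is a $\Pi^1_1$-rank on the coanalytic set of separable spaces with separable dual), each sublevel set $\{X:\sz(X)\kleq\alpha\}$ is Borel in $\mathrm{SB}$; so $\{(X,Z):\sz(Z)\kleq\alpha\}$ is Borel, $\{(X,Z):\Phi(X,Z)\text{ and }\sz(Z)\kleq\alpha\}$ is $\Sigma^1_1$, and its projection onto the first coordinate is $\Sigma^1_1$. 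The right-hand side is then the intersection of two analytic sets and a Borel set, hence analytic, completing the proof.

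The one genuinely delicate step is the handling of the \emph{dual} Szlenk index $\sz(X^*)$: the assignment $X\mapsto X^*$ is not Borel on $\mathrm{SB}$, so one cannot pull back the Borel set $\{\sz\kleq\alpha\}$ along it. The perfect-pairing device sidesteps this by replacing $X^*$ with an auxiliary $Z\kin\mathrm{SB}$ carrying a perfect pairing with $X$ — a purely analytic requirement — and it is precisely the earlier conjunct $X\khookrightarrow U$, which forces $X$ reflexive, that makes this substitution exact. All the remaining work is routine bookkeeping with the complexity calculus for the Effros--Borel structure.
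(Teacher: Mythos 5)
Your proof is correct, but it follows a genuinely different route from the paper's. The paper first shows that $\cC_{\omega^{\alpha\cdot\omega}}$ is analytic by proving the \emph{equivalence} $X\kin\cC_{\omega^{\alpha\cdot\omega}}\iff \E n\ X\khookrightarrow Z_{\alpha,n}$ (the converse direction using the subsequential $(\di{T}{*}{\alpha\cdot n},T_{\alpha\cdot n})$ estimates of the FDD of $Z_{\alpha,n}$ and duality), and then writes $\cC_\alpha=\cS_\alpha\cap\cC_{\omega^{\alpha\cdot\omega}}$, where the analyticity of $\cS_\alpha=\{X:\max\{\sz(X),\sz(X^*)\}\kleq\alpha\}$ is Dodos's ``definability under duality'' theorem, quoted as a black box. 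You instead use the universal space only one-directionally, purely to force reflexivity of $X$; you impose $\sz(X)\kleq\alpha$ via Bossard's result that the Szlenk sublevel sets are Borel in $\mathrm{SB}$ (a fact the paper itself cites, so it is fair to use); and you handle the dual condition by an existential quantifier over $Z\kin\mathrm{SB}$ together with your perfect-pairing relation $\Phi$, which for reflexive $X$ pins down $Z$ as an isomorph of $X^*$ (the norming range of $z\mapsto B(\cdot,z)$ is $w^*$-dense and, by reflexivity, $w^*$-closed). In effect you reprove by hand exactly the reflexive case of Dodos's theorem, which is all that is needed here since the conjunct $X\khookrightarrow U$ already confines you to reflexive spaces; the paper's route is shorter given Dodos's result but depends on it, while yours trades that dependence for Bossard's rank theorem plus the pairing device, and makes transparent why the non-reflexive case (where the pairing would only identify a norming subspace of $X^*$) is genuinely harder. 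Two small points of rigor: the ``Polish space of bounded bilinear forms'' should be replaced by the standard coding of $B$ through the matrix $\big(B(d_n(X),d_m(Z))\big)_{n,m}\kin\br^{\bn\times\bn}$ with Borel compatibility and boundedness conditions, after which your countable rational inequalities over the selectors do give that $\Phi$ is $\Sigma^1_1$; and the forward implication needs the (true, but worth a line) observation that $\alpha\kleq\omega^{\alpha\cdot\omega}$, so that $\cC_\alpha\ksubseteq\cC_{\omega^{\alpha\cdot\omega}}$ and Theorem~\ref{mainthm:universal} applies.
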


If $\alpha$ is of the form $\omega^{\eta\cdot\omega}$ for some
$\eta\kle\omega_1$, then Theorem~\ref{mainthm:analytic} follows from
our main results and from standard facts in descriptive set theory (as
pointed out to us by C.~Rosendal). This, combined with a recent result
of P.~Dodos~\cite{Dodos} concerning analyticity of duals of analytic
classes, then gives the general case. We present this result in the
final section of our paper. We are grateful to P.~Dodos, V.~Ferenczi
and C.~Rosendal for showing us the descriptive set theoretic
implications of our results.

Let us now mention some open problems. The first one asks if
Theorem~\ref{mainthm:universal} can be sharpened.

\begin{problem}
  Is there a universal element of $\cC_{\omega^{\alpha\cdot\omega}}$
  for each $\alpha\kle\omega_1$?
\end{problem}
We do know that there is no space $Z$ in
$\cC_{\omega^{\alpha\cdot\omega}}$ such that for some $K\kge 0$ every
space in $\cC_{\omega^{\alpha\cdot\omega}}$ $K$-embeds into $Z$ (see
the remark following Theorem~\ref{thm:universal-ca}). We also know
that the answer to the above question is negative if the Tsirelson
spaces $T_{\alpha,c}$ are of bounded distortion with constant $D$
independent of $c\kin\big[\frac12,1\big)$. Of course, it is a famous,
long standing open problem whether even the Tsirelson space
$T_{1,\frac12}$ is of bounded distortion.

It is known that one only needs to consider classes $\cC_\alpha$ where
$\alpha$ is of the form $\omega^\eta$ for some $\eta\kle\omega_1$
(Theorem~\ref{thm:szlenk=weak} in Section~\ref{section:szlenk}). In
Section~\ref{section:main} we obtain embedding and universality
results for these general classes
(Theorem~\ref{thm:universal-ca}). However, these are not quite as
sharp as Theorems~\ref{mainthm:embedding} and~\ref{mainthm:universal}
above. This leads to the following questions.

\begin{problem}
  Is it true that, given $\alpha\kle\omega_1$, every space in
  $\cC_{\omega^\alpha}$ embeds into a space with a basis of the same
  class?

  Is there a universal element of $\cC_{\omega^\alpha}$ for each
  $\alpha\kle\omega_1$?
\end{problem}

\section{Embeddings into spaces with FDDs}
\label{section:embedding}

In this section we state an embedding theorem from~\cite{OSZ2}
(Theorem~\ref{thm:universal} below). This requires a fair amount of
definitions. Much of this will be used throughout the paper.

Let $Z$ be a Banach space with an FDD $E\keq(E_n)$.  For $n\kin\bn$ we
denote by $P^E_n$ the
$n$-th \emph{coordinate projection}, \ie $P^E_n\colon Z\to E_n$ is the
map defined by $\sum_i z_i\mapsto z_n$, where $z_i\kin E_i$ for all
$i\kin\bn$. For a finite set $A\ksubset\bn$ we put
$P^E_A\keq\sum_{n\in A} P^E_n$. The \emph{projection constant $K(E,Z)$
  of $(E_n)$ (in $Z$) }is defined by
\[
K=K(E,Z)=\sup_{m\leq n}\norm{P^E_{[m,n]}}\ ,
\]
where $[m,n]$ denotes the interval $\{m,m\kplus 1,\dots, n\}$ in
$\bn$. Recall that $K$ is always finite and, as in the case of bases,
we say that \emph{$(E_n)$ is bimonotone (in $Z$) }if $K\keq 1$. By
passing to the equivalent norm
\[
\tnorm{\cdot}\colon Z\to\br\ ,\qquad z\mapsto\sup_{m\leq n}
\norm{P^E_{[m,n]}(z)}\ ,
\]
we can always renorm $Z$ so that $K\keq 1$.

A sequence $(F_n)$ of finite-dimensional spaces is  called \emph{a
  blocking of $(E_n)$ }if for some sequence $m_1\kle m_2\kle\dots$ in
$\bn$ we have $F_n\keq\bigoplus_{j=m_{n-1}+1}^{m_n}E_j$ for all
$n\kin\bn$ ($m_0\keq 0$). Note that if $E\keq(E_n)$ is an FDD of a
Banach space $Z$, and if $F\keq(F_n)$ is a blocking of $(E_n)$, then
$(F_n)$ is also an FDD for $Z$ with $K(F,Z)\kleq K(E,Z)$.

For a sequence $(E_i)$ of finite-dimensional spaces
we define the vector space
\[
\coo(\oplus_{i=1}^\infty E_i)=\big\{(z_i):\,z_i\kin E_i \text{ for all
  $i\kin\bn$, and $\{i\kin\bn:\,z_i\kneq 0\}$ is finite}\big\}\ ,
\]
which is dense in each Banach space for which $(E_i)$ is an FDD. For a
set $A\ksubset\bn$ we denote  by $\bigoplus_{i\in A} E_i$ the linear
subspace of $\coo(\oplus E_i)$ generated by the elements of
$\bigcup_{i\in A}E_i$. As usual we denote the vector space of
sequences in $\br$ which are eventually zero by $\coo$. We sometimes
will consider for the same sequence $(E_i)$ of finite-dimensional
spaces different norms on $\coo(\oplus E_i)$. In order to avoid
confusion we will therefore often index the norm by the Banach space
whose norm we are using, \ie $\norm{\cdot}_Z$ denotes the norm of the
Banach space $Z$.

If $Z$ has an FDD $(E_i)$, the vector space
$\coo(\oplus_{i=1}^\infty E^*_i)$, where $E^*_i$ is the dual space of
$E_i$ for each $i\kin\bn$, can be identified in a natural way with a
$w^*$-dense subspace of $Z^*$. Note however
that the embedding $E^*_i\hookrightarrow Z^*$ is, in general, not
isometric unless $K\keq 1$. We will always consider $E^*_i$ with the
norm it inherits from $Z^*$ instead of the norm it has as the dual
space of $E_i$. We denote the norm closure of
$\coo(\oplus_{i=1}^\infty E^*_i)$ in $Z^*$ by $\Zs$. Note that $\Zs$
is $w^*$-dense in $Z^*$, the unit ball $B_{\Zs}$ norms $Z$, and
$(E_i^*)$ is an FDD of $\Zs$ having a projection constant not
exceeding $K(E,Z)$. If $K(E,Z)\keq1$, then $B_{\Zs}$ is $1$-norming
for $Z$ and $Z^{(*)(*)}\keq Z$.

For $z\kin \coo(\oplus E_i)$ we define \emph{the support $\supp_E(z)$
  of $z$ with respect to $(E_i)$ }by
\[
\supp_E(z)=\{i\kin\bn:\,P^E_i(z)\kneq 0\}\ .
\]

A sequence $(z_i)$ (finite or infinite) of non-zero
vectors in $\coo(\oplus E_i)$ is called \emph{a block sequence of
  $(E_i)$ }if
\[
\max\supp_E(z_n)<  \min\supp_E(z_{n+1})\qquad\text{whenever $n\kin\bn$
  (or $n\kle\,$length$(z_i)$)}\ .
\]
A block sequence $(z_i)$ of $(E_i)$ is called \emph{normalized (in
  $Z$) }if $\norm{z_n}_Z\keq 1$ for all $n$.

Let $\deltab\keq(\delta_i)\ksubset (0,1)$ with $\delta_i\downarrow
0$. A (finite or infinite) sequence $(z_i)$ in $S_Z$ is called
\emph{a $\deltab$-block sequence of $(E_i)$ }if there exists a
sequence $0\kleq k_0\kle k_1\kle k_2\kle\dots$ in $\bn$ such that
\[
\norm{z_n-P^E_{(k_{n-1},k_n]}(z_n)}<\delta_n\qquad\text{for all
  $n\kin\bn$ (or $n\kleq\,$length$(z_i)$)}\ .
\]

\begin{defn}
  Given two sequences $(e_i)$ and $(f_i)$ in some Banach spaces, and
  given a constant $C\kge 0$, we say that \emph{$(f_i)$ $C$-dominates
    $(e_i)$, }or that \emph{$(e_i)$ is $C$-dominated by $(f_i)$}, if
  \[
  \Bignorm{\sum a_ie_i}\leq C  \Bignorm{\sum a_if_i}\qquad\text{for
    all $(a_i)\kin\coo$}\ .
  \]  
  We say that \emph{$(e_i)$ and $(f_i)$ are $C$-equivalent }if there
  exist positive constants $A$ and $B$ with $A\kcdot B\kleq C$ such
  that $(f_i)$ $A$-dominates $(e_i)$ and is $B$-dominated by $(e_i)$.

  We say that \emph{$(f_i)$ dominates $(e_i)$, }or that \emph{$(e_i)$
    is dominated by $(f_i)$, }if there exists a constant $C\kge 0$ such
  that $(f_i)$ $C$-dominates $(e_i)$. We say that \emph{$(e_i)$ and
    $(f_i)$ are equivalent }if they are $C$-equivalent for some $C\kge
  0$.
\end{defn}

We shall now introduce certain lower and upper norm estimates for
FDD's.

\begin{defn}
  Let $Z$ be a Banach space with an FDD $(E_n)$, let $V$ be a Banach
  space with a normalized, $1$-unconditional basis $(v_i)$ and let
  $1\kleq C\kle\infty$.
  
  We say that $(E_n)$ \emph{satisfies subsequential $C$-$V$-lower
  estimates (in $Z$) }if every normalized block sequence $(z_i)$ of
  $(E_n)$ in $Z$ $C$-dominates $(v_{m_i})$, where
  $m_i\keq\min\supp_E(z_i)$ for all $i\kin\bn$, and $(E_n)$
  \emph{satisfies subsequential $C$-$V$-upper estimates (in $Z$) }if
  every normalized block sequence $(z_i)$ of $(E_n)$ in $Z$ is
  $C$-dominated by $(v_{m_i})$, where $m_i\keq\min\supp_E(z_i)$ for
  all $i\kin\bn$.

  If $U$ is another space with a normalized and 1-unconditional basis
  $(u_i)$, we say that $(E_n)$ \emph{satisfies subsequential
  $C$-$(V,U)$ estimates (in $Z$) }if it satisfies subsequential
  $C$-$V$-lower and $C$-$U$-upper estimates in~$Z$.

  We say that $(E_n)$ satisfies \emph{subsequential $V$-lower,
  $U$-upper }or \emph{$(V,U)$ estimates (in $Z$) }if for some $C\kgeq
  1$ it satisfies subsequential $C$-$V$-lower, $C$-$U$-upper or
  $C$-$(V,U)$ estimates in $Z$, respectively.
\end{defn}

We shall need a coordinate-free version of subsequential lower and
upper estimates. This can be done in terms of a game as described in
the Introduction. Another way uses infinite, countably branching
trees, and this is what we shall follow here. We define  for
$\ell\kin\bn$
\begin{align*}
  T_\ell&=\big\{(n_1,n_2,\dots,n_\ell):\,n_1\kle n_2\kle\dots\kle
  n_\ell \text{ are in }\bn\big\}
  \intertext{and}
  T_\infty&=\bigcup_{\ell=1}^\infty T_\ell\ ,\qquad
  \odd =\bigcup_{\ell=1}^\infty T_{2\ell}\ .
\end{align*}

An \emph{even tree }in a Banach space $X$ is a family
$(x_\alpha)_{\alpha\in\odd}$ in $X$. Sequences of
the form $\big(x_{(\alpha,n)}\big)_{n>n_{2\ell-1}}$, where
$\ell\kin\bn$ and $\alpha\keq (n_1, n_2,\dots, n_{2\ell-1})\kin
T_\infty$, are called \emph{nodes }of the tree. For a sequence
$n_1\kle n_2\kle\dots$ of positive integers the sequence
$\big(x_{(n_1,n_2,\dots,n_{2\ell})}\big)_{\ell=1}^\infty$ is called a
\emph{branch }of the tree.

An even tree $(x_\alpha)_{\alpha\in\odd}$ in a Banach space $X$ is
called \emph{normalized }if $\norm{x_{\alpha}}\keq 1$ for all
$\alpha\kin\odd$, and is called \emph{weakly null }if every node is a
weakly null sequence. If $X$ has an FDD $(E_n)$, then
$(x_\alpha)_{\alpha\in\odd}$ is called a \emph{block even tree of
  $(E_n)$ }if every node is a block sequence of $(E_n)$.

\begin{defn}
  \label{def:tree-est}
  Let $V$ be a Banach space with a normalized and $1$-unconditional
  basis $(v_i)$, and let $C\kin[1,\infty)$. Let $X$ be an
  infinite-dimensional Banach space. We say that \emph{$X$ satisfies
    subsequential $C$-$V$-lower tree estimates }if every normalized,
  weakly null even tree $(x_\alpha)_{\alpha\in\odd}$ in $X$ has a
  branch $\big(x_{(n_1,n_2,\dots,n_{2i})}\big)$ which $C$-dominates
  $(v_{n_{2i-1}})$.
  
  We say that $X$ \emph{satisfies subsequential $C$-$V$-upper tree
  estimates }if every normalized, weakly null even tree
  $(x_\alpha)_{\alpha\in\odd}$ in $X$ has a branch
  $\big(x_{(n_1,n_2,\dots,n_{2i})}\big)$ which is $C$-dominated by
  $(v_{n_{2i-1}})$.

  If $U$ is a second space with a $1$-unconditional and normalized
  basis $(u_i)$, we say that $X$ \emph{satisfies subsequential
  $C$-$(V,U)$ tree estimates }if it satisfies subsequential
  $C$-$V$-lower and $C$-$U$-upper tree estimates.

  We say that $X$ \emph{satisfies subsequential $V$-lower, $U$-upper
  }or \emph{$(V,U)$ tree estimates }if for some $1\kleq C\kle\infty$
  $X$ satisfies subsequential $C$-$V$-lower, $C$-$U$-upper or
  $C$-$(V,U)$ tree estimates, respectively.
\end{defn}

We next define some properties of bases which appear in the statement
of Theorem~\ref{thm:universal}.

\begin{defn}
  Let $V$ be a Banach space with a normalized, $1$-unconditional basis
  $(v_i)$ and let $1\kleq C\kle\infty$.

  We say that $(v_i)$ is \emph{$C$-block-stable }if any two normalized
  block bases $(x_i)$ and $(y_i)$ with
  \[
  \max \big( \supp (x_i)\cup\supp(y_i)\big) < \min \big( \supp
  (x_{i+1})\cup\supp(y_{i+1})\big)\qquad\text{for all }i\kin\bn
  \]
  are $C$-equivalent. We say that $(v_i)$ is \emph{block-stable }if it
  is $C$-block-stable for some constant~$C$.

  We say that $(v_i)$ is \emph{$C$-right-dominant }(respectively,
  \emph{$C$-left-dominant}) if for all sequences $m_1\kle m_2\kle
  \dots$ and $n_1\kle n_2\kle\dots$ of positive integers with
  $m_i\kleq n_i$ for all $i\kin\bn$ we have that $(v_{m_i})$ is
  $C$-dominated by (respectively, $C$-dominates) $(v_{n_i})$. We say
  that $(v_i)$ is \emph{right-dominant }or \emph{left-dominant }if for
  some $C\kgeq 1$ it is $C$-right-dominant or $C$-left-dominant,
  respectively.
\end{defn}

We are now ready to state the main embedding theorem from~\cite{OSZ2}
which we shall use in the proofs of the main results of this paper.

Let $V$ and $U$ be reflexive spaces with normalized,
$1$-unconditional, block-stable bases $(v_i)$ and $(u_i)$,
respectively, such that $(v_i)$ is left-dominant, $(u_i)$ is
right-dominant and $(v_i)$ is dominated by $(u_i)$. For each
$C\kin[1,\infty)$ let $\cA_{V,U}(C)$ denote the class of all
separable, infinite-dimensional, reflexive Banach spaces that
satisfy subsequential $C$-$(V,U)$-tree estimates. We also let
\[
\cA_{V,U}=\bigcup_{C\in[1,\infty)} \cA_{V,U}(C)\ ,
\]
which is the class of all separable, infinite-dimensional, reflexive
Banach spaces that satisfy subsequential $(V,U)$-tree estimates.

\begin{thm}[\cite{OSZ2}]
  \label{thm:universal}
  The class $\cA_{V,U}$ contains an element which is universal for the
  class.

  More precisely, for all $B,D,L,R\kin[1,\infty)$ there exists a
  constant $\Cb\keq\Cb(B,D)\kin[1,\infty)$ and for all
  $C\kin[1,\infty)$ there is a constant $K(C)\keq
  K_{B,D,L,R}(C)\kin[1,\infty)$ such that if $(v_i)$ is
  $B$-block-stable and $L$-left-dominant, if $(u_i)$ is
  $B$-block-stable and $R$-right-dominant, and if $(v_i)$ is
  $D$-dominated by $(u_i)$, then there exists $Z\kin\cA_{V,U}$ such
  that every $X\kin\cA_{V,U}(C)$ $K(C)$-embeds into $Z$, and moreover
  $Z$ has a bimonotone FDD satisfying subsequential $\Cb$-$(V,U)$
  estimates in $Z$.
\end{thm}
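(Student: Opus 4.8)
The plan is to decompose the statement into three parts: a \emph{coordinatization} step, reducing arbitrary members of $\cA_{V,U}(C)$ to spaces with an FDD satisfying genuine (not merely tree) subsequential $(V,U)$ estimates; an \emph{amalgamation} step, producing a single space $Z$ with a bimonotone FDD satisfying subsequential $\Cb$-$(V,U)$ estimates into which every such FDD space embeds; and a \emph{closure} step, verifying that $Z$ itself is reflexive and satisfies subsequential $(V,U)$ tree estimates, so that $Z\in\cA_{V,U}$. Throughout one keeps the loss constants as functions only of $B,D$ (for $\Cb$) and additionally of $L,R,C$ (for the embedding constant $K(C)$); this is possible because the only places where constants degrade are the perturbation, blocking, and index-shift arguments, and these are governed precisely by block-stability ($B$), the domination $(v_i)\lesssim_D(u_i)$, and the one-sided dominance constants ($L$, $R$).

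For the coordinatization step I would first invoke Zippin's theorem~\cite{Zippin:88} to embed a given $X\in\cA_{V,U}(C)$ into a reflexive space $W$ with a bimonotone FDD $(G_n)$. The goal is to produce a blocking $(F_n)$ of $(G_n)$, together with a renorming, so that $X$ sits, up to an arbitrarily small perturbation, inside the closed span of a block sequence of $(F_n)$, and every normalized block sequence of $(F_n)$ satisfies subsequential $C'$-$(V,U)$ estimates with $C'=C'(C,B,L,R)$. Here one runs the standard skipped-blocking and diagonalization machinery: weak nullity of every node of a normalized weakly null even tree in $X$ lets one, along any branch, push most of the mass of each vector into a single block interval of $(G_n)$, while a tree-pruning argument extracts from the $C$-$(V,U)$-tree-estimate hypothesis one blocking on which all block sequences obey the corresponding lower $V$- and upper $U$-estimates. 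The subtle point is the \emph{subsequential} bookkeeping: the index attached to a block $z_i$ must be $m_i=\min\supp_F(z_i)$, whereas the tree hypothesis delivers an estimate indexed by tree labels $n_{2i-1}$; reconciling these, and in particular passing from one admissible index sequence to a pointwise-smaller or larger one, is exactly what left-dominance of $(v_i)$, right-dominance of $(u_i)$ and block-stability buy us, at the cost of $L,R,B$. This is the step I expect to be the main obstacle.

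For the amalgamation step I would fix a sequence $(H_k)_{k\in\bn}$ of finite-dimensional normed spaces with rational structure that is dense in the Banach–Mazur sense, each isometry type repeated infinitely often, and organize one copy of each along the tree $T_\infty$. Concretely, $Z$ is the completion of $\coo(\oplus_k E_k)$, where the blocks $E_k$ run through the $H_k$'s, under the largest norm making $(E_k)$ bimonotone, dominating the lower estimate $\bignorm{\sum\norm{z_i}v_{m_i}}_V\leq\Cb\bignorm{\sum z_i}_Z$ on block sequences, and being dominated by the upper estimate $\bignorm{\sum z_i}_Z\leq\Cb\bignorm{\sum\norm{z_i}u_{m_i}}_U$; existence and consistency of such an interpolation norm is where reflexivity of $V$ and $U$, $1$-unconditionality, and the hypothesis $(v_i)\lesssim_D(u_i)$ enter, the two one-sided requirements being compatible precisely because $v\lesssim u$. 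Given any $Y$ with a bimonotone FDD satisfying subsequential $\Cb$-$(V,U)$ estimates, a small-perturbation argument replaces the blocks of $Y$'s FDD by members of the list $(H_k)$ occurring along a suitable branch of the tree inside $Z$; the two-sided estimates, holding simultaneously in $Y$ and in $Z$ with comparable constants, then force the resulting linear map to be an isomorphic embedding with constant $K(C)$ depending only on the stated parameters. Combining with the coordinatization step embeds every $X\in\cA_{V,U}(C)$ into $Z$.

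Finally, for the closure step I would check $Z\in\cA_{V,U}$. Reflexivity: the lower $V$-estimate together with reflexivity of $V$ rules out $\ell_1$ in block subspaces of $(E_k)$, and the upper $U$-estimate with reflexivity of $U$ rules out $\co$; since $(E_k)$ is an FDD, the standard James-type criterion (no $\ell_1$, no $\co$ spread through the FDD) gives that $(E_k)$ is shrinking and boundedly complete, hence $Z$ is reflexive. Tree estimates: given any normalized weakly null even tree in $Z$, a gliding-hump perturbation turns it into a block even tree of $(E_k)$; any branch of the latter satisfies subsequential $(V,U)$ estimates directly from the FDD estimates, and block-stability transfers this back to the original branch, so $Z$ satisfies subsequential $\Cb'$-$(V,U)$ tree estimates for some $\Cb'$ comparable to $\Cb$. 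Thus $Z$ is a member of $\cA_{V,U}$ universal for the class with the asserted control on constants. The bulk of the technical work—and the only place the hypotheses on $(v_i)$ and $(u_i)$ are genuinely used rather than merely quoted—is the subsequential-index bookkeeping inside the coordinatization step.
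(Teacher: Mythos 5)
First, a point of reference: this paper does not prove Theorem~\ref{thm:universal} at all --- it is imported verbatim from~\cite{OSZ2} --- so the comparison is with the proof given there. Your three-part architecture (coordinatization, amalgamation, closure) does mirror the strategy of~\cite{OSZ2}, and your closure step (shrinking plus boundedly complete from the upper-$U$ and lower-$V$ estimates with $U,V$ reflexive, then perturbation of weakly null even trees to block trees) is fine. But the two substantive steps have genuine gaps.

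The coordinatization step, as you state it, does not work: you propose to find a blocking $(F_n)$ of the FDD of the Zippin superspace $W\supseteq X$ (``together with a renorming'') so that \emph{every} normalized block sequence of $(F_n)$ satisfies subsequential $(V,U)$ estimates. Tree estimates are a hypothesis about sequences lying in $X$; block sequences of any blocking of $W$'s FDD are arbitrary elements of $W$ and need not be anywhere near $X$, so no amount of blocking, pruning or diagonalization can force lower $V$-estimates (or upper $U$-estimates) on them. The actual mechanism in~\cite{OSZ2} (and~\cite{OS:06a}) is to build a \emph{new} space: one defines a $U$-sum-type norm on $\coo(\oplus F_n)$ whose FDD satisfies the upper estimates by construction, and then uses Johnson's skipped-blocking decomposition --- every $x\in S_X$ is, after a suitable blocking, close to a sum of $\deltab$-skipped blocks, and $\deltab$-skipped block sequences \emph{of vectors from $X$} do inherit the tree estimates --- to show that the identity on $X$ remains an isomorphism into the new space. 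Moreover the lower $V$-estimates are not obtained directly at all: one first adds upper $U$-estimates, then passes to the dual (this is exactly the role of Proposition~\ref{prop:tree-est-duality} and the reflexivity hypothesis), adds upper $V^{(*)}$-estimates there, and dualizes back. This dual pass is the technical heart of the theorem, and your sketch only names it as ``the main obstacle'' without supplying it. In the amalgamation step there is a second gap: the existence of ``the largest norm'' on $\coo(\oplus_k E_k)$ that is simultaneously dominated by the upper-$U$ expression and dominates the lower-$V$ expression, with constant $\Cb$ depending only on $B$ and $D$, is asserted rather than constructed; compatibility of the two constraints is precisely what the explicit interpolation-type construction in~\cite{OSZ2} (using block-stability and $(v_i)$ being $D$-dominated by $(u_i)$) delivers, and it is also the source of the claimed independence of $\Cb$ from $C$. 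The re-indexing argument that embeds an arbitrary FDD space with $(V,U)$ estimates into $Z$ with constant controlled by $L,R,B$ likewise needs to be carried out, not just attributed to dominance. As it stands, then, the proposal records the correct skeleton but omits or misstates the constructions that constitute the proof.
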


At some point we shall also need the following duality result.

\begin{prop}[\cite{OSZ2}]
  \label{prop:tree-est-duality}
  Assume that $U$ is a space with a normalized, $1$-unconditional
  basis $(u_i)$ which is $R$-right-dominant for some $R\kgeq 1$, and
  that $X$ is a reflexive space which satisfies subsequential
  $C$-$U$-upper tree estimates for some $C\kgeq 1$.

  Then, for any $\ve\kge 0$, $X^*$ satisfies subsequential
  $(2CR\kplus\ve)$-$\Us$-lower tree estimates.
\end{prop}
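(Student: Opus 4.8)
\emph{Proof plan.} Let $(y^*_\gamma)_{\gamma\in\odd}$ be a normalized, weakly null even tree in $X^*$ and fix $\ve\kge 0$; here $\Us$ carries the normalized, $1$-unconditional basis $(u^*_i)$ dual to $(u_i)$, and we must produce one branch $(y^*_{(n_1,\dots,n_{2i})})_i$ that $(2CR\kplus\ve)$-dominates $(u^*_{n_{2i-1}})_i$. The strategy is to manufacture from $(y^*_\gamma)$ a normalized, weakly null even tree in $X$, apply the $C$-$U$-upper tree estimate hypothesis to it, and read off the dual branch; the constant $2CR$ will arise as (factor $\tfrac12$ from making the $X$-tree weakly null)$^{-1}\kcdot C\kcdot R$, the $R$ coming from the re-indexing involved in passing to a full subtree.

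Fix $(\delta_i)\ksubset(0,1)$ decreasing to $0$ with $\sum_i i\,\delta_i$ as small as we please. For $\gamma\kin\odd$ of length $2i$ choose $x'_\gamma\kin S_X$ with $\langle y^*_\gamma,x'_\gamma\rangle\kgeq 1\kminus\delta_i$. The $x'_\gamma$ need not form a weakly null tree, so within each node, indexed by some $\alpha$ of odd length, pass by reflexivity to a subsequence of the last coordinate along which $x'_{(\alpha,n)}\to w_\alpha$ weakly, $w_\alpha\kin B_X$, and replace $x'_{(\alpha,n)}$ by $\tilde x_{(\alpha,n)}\keq(x'_{(\alpha,n)}\kminus w_\alpha)/\norm{x'_{(\alpha,n)}\kminus w_\alpha}$. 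Since each node of $(y^*_\gamma)$ is $w^*$-null, $\langle y^*_{(\alpha,n)},w_\alpha\rangle\to 0$; hence after a further shrinking $\norm{x'_{(\alpha,n)}\kminus w_\alpha}\kin[1\kminus 2\delta_i,2]$, $\langle y^*_{(\alpha,n)},\tilde x_{(\alpha,n)}\rangle\kgeq\tfrac12\kminus\delta_i$, and $(\tilde x_{(\alpha,n)})_n$ is weakly null. Diagonalising over all nodes we reach a full subtree on which $(\tilde x_\gamma)$ is a normalized, weakly null even tree with $\langle y^*_\gamma,\tilde x_\gamma\rangle\kgeq\tfrac12\kminus\delta_i$ whenever $|\gamma|\keq 2i$. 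One further pass to a full subtree, again using weak nullity of the nodes of both trees, produces the off-diagonal bounds
\[
\bigabs{\langle y^*_{(n_1,\dots,n_{2j})},\tilde x_{(n_1,\dots,n_{2i})}\rangle}<\delta_{\max(i,j)}\qquad(i\kneq j)
\]
along every branch: when $j\kgeq i$ shrink the $2j$-th coordinate so the later functional nearly annihilates the finitely many earlier $\tilde x$'s, and when $i\kgeq j$ shrink the $2i$-th coordinate so the later $\tilde x$ nearly annihilates the finitely many earlier functionals.

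Re-index the subtree by an order isomorphism of $T_\infty$, which can only raise coordinates, to get a genuine normalized, weakly null even tree in $X$, and apply the hypothesis that $X$ satisfies subsequential $C$-$U$-upper tree estimates: there is a branch, corresponding to a branch $(p_i)$ of the original tree with $p_i\kgeq n_i$ (the re-indexed labels), such that $(\tilde x_{(p_1,\dots,p_{2i})})_i$ is $C$-dominated by $(u_{n_{2i-1}})_i$. Since $n_{2i-1}\kleq p_{2i-1}$ and $(u_i)$ is $R$-right-dominant, $(u_{n_{2i-1}})_i$ is in turn $R$-dominated by $(u_{p_{2i-1}})_i$, so $(\tilde x_{(p_1,\dots,p_{2i})})_i$ is $CR$-dominated by $(u_{p_{2i-1}})_i$; this is where the factor $R$ enters.

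Finally, for the branch $(y^*_{(p_1,\dots,p_{2i})})_i$ and finitely many scalars $(a_i)$, normalize so that $\bignorm{\sum_i a_iu^*_{p_{2i-1}}}_{\Us}\keq 1$ (whence $|a_i|\kleq 1$), and pick $v\kin B_U$ norming this functional to within $\ve'$; by $1$-unconditionality $v$ may be taken to be $\sum_i b_{p_{2i-1}}u_{p_{2i-1}}$ with $|b_{p_{2i-1}}|\kleq 1$ and $\mathrm{sign}(b_{p_{2i-1}})\keq\mathrm{sign}(a_i)$, so $\sum_i|a_ib_{p_{2i-1}}|\kin[1\kminus\ve',1]$. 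Testing $\sum_j a_jy^*_{(p_1,\dots,p_{2j})}$ against $\xi\keq\sum_i|b_{p_{2i-1}}|\,\mathrm{sign}(a_i)\,\tilde x_{(p_1,\dots,p_{2i})}$, the diagonal terms contribute at least $\sum_i|a_ib_{p_{2i-1}}|(\tfrac12\kminus\delta_i)\kgeq\tfrac12\kminus\ve'\kminus\sum_i\delta_i$, the off-diagonal terms contribute at most $\sum_{i\neq j}|a_j|\,|b_{p_{2i-1}}|\,\delta_{\max(i,j)}\kleq 2\sum_i i\,\delta_i$ in modulus, and $\norm{\xi}_X\kleq CR\bignorm{\sum_i|b_{p_{2i-1}}|u_{p_{2i-1}}}_U\keq CR\norm{v}_U\kleq CR$ by the $CR$-domination and $1$-unconditionality; hence
\[
\Bignorm{\sum_j a_jy^*_{(p_1,\dots,p_{2j})}}_{X^*}\ \kgeq\ (CR)^{-1}\Bigl(\tfrac12\kminus\ve'\kminus 3\textstyle\sum_i i\,\delta_i\Bigr)\ ,
\]
which for $\ve'$ and $(\delta_i)$ chosen small enough at the outset gives domination of $(u^*_{p_{2i-1}})_i$ by this branch with constant $\kleq 2CR\kplus\ve$. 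The only real difficulty is organisational: arranging a single full subtree on which weak nullity of $(\tilde x_\gamma)$ and all the off-diagonal estimates hold simultaneously, and tracking the one upward re-indexing so that right-dominance is invoked exactly once; the factor $\tfrac12$ is forced the moment one symmetrises $(x'_\gamma)$ around its weak limits to make the tree weakly null, and the $C$ and $R$ come directly from the hypothesis and this re-indexing.
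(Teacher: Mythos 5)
The paper offers no proof of this proposition (it is quoted from \cite{OSZ2}), and your argument is the standard one behind the stated constant: nearly norming vectors $x'_\gamma\kin S_X$, subtraction of weak limits along nodes via reflexivity (the source of the factor $2$), pruning to a full subtree with small off-diagonal pairings along branches, the $C$-$U$-upper tree estimate applied to the re-indexed weakly null tree combined with $R$-right-dominance to return to the original indices, and the closing dualization with a norming vector supported on the relevant coordinates. I find no gap; this is correct and is essentially the same route as the cited proof.
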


\section{Tsirelson Spaces}
\label{section:tsirelson}

When we apply Theorem~1 we shall take $U\keq T_\alpha$, the Tsirelson
space of order~$\alpha$ with parameter $\frac12$, and $V\keq
\di{T}{*}{\alpha}$, the dual of $T_\alpha$. In this section we recall
the definition and some of the properties of $T_\alpha$. At the end we
will state a combinatorial principle which will be used later on.

We begin with some preliminary definitions. We shall write
$\fin{\bn}$ for the set of all finite subsets of $\bn$ and
$\infin{\bn}$ for the set of all infinite subsets of $\bn$. These
two families will be given the product topology as subsets of
$\{0,1\}^\bn$. A family $\cF\ksubset\fin{\bn}$ is called
\emph{hereditary }if $A\kin\cF$ whenever $A\ksubset B$ and $B\kin\cF$,
and $\cF$ is called \emph{compact }if it is compact in the product
topology. Note that a hereditary family is compact if and only if it
contains no strictly ascending chains. A family
$\cF\ksubset\fin{\bn}$ is called \emph{thin }if for all $A,B\kin\cF$
we have $A\ksubset B$ implies that $A\keq B$, \ie $\cF$ contains no
two comparable (with respect to inclusion) elements.

Given $n,\ a_1\kle \dots\kle
a_n,\ b_1\kle \dots\kle b_n$ in $\bn$ we say that $\{b_1,\dots,b_n\}$
is \emph{a spread }of $\{a_1,\dots,a_n\}$ if $a_i\kleq b_i$ for $i\keq
1,\dots,n$. A family $\cF\ksubset\fin{\bn}$ is called
\emph{spreading }if every spread of every element of $\cF$ is also in
$\cF$. This is an appropriate place to make the convention that the
elements of a subset of $\bn$ will always be written in increasing
order. So, for example, when we write
$\{m_1,m_2,\dots,m_k\}\kin\fin{\bn}$, it is implicitly assumed
that $m_1\kle m_2\kle\dots\kle m_k$.

For $\cF\ksubset\fin{\bn}$ we write $\MAX (\cF)$ for the set of
maximal (with respect to inclusion) elements of $\cF$. Note that $\MAX
(\cF)$ is always a thin family.

For subsets $A$ and $B$ of $\bn$ we write $A\kle B$ if $a\kle b$ for
all $a\kin A$ and $b\kin B$. For $n\kin\bn$ and $A\ksubset \bn$ we
write $n\kle A$ if $\{n\}\kle A$. A (finite or infinite) sequence
$A_1,A_2,\dots$ of subsets of $\bn$ is called \emph{successive }if
$A_1\kle A_2\kle \dots$. Given a family $\cF\ksubset\fin{\bn}$, a
sequence $A_1,\dots,A_k$ of non-empty, finite subsets of $\bn$ is
called \emph{$\cF$-admissible }if it is successive and $\{\min
A_1,\dots,\min A_k\}\kin\cF$.

We next recall the definitions of \emph{the Schreier families
  $\cS_\alpha$ }and \emph{the fine Schreier families $\cF_\alpha$},
where $\alpha$ is a countable ordinal. We first fix for every limit
ordinal $\lambda$ a sequence $(\alpha_n)$ of ordinals with $1\kleq
\alpha_n\nearrow\lambda$. We now define the fine Schreier families
$(\cF_\alpha)_{\alpha <\omega_1}$ by recursion:
\begin{align*}
  \cF_0 &= \{ \emptyset \}\\
  \cF_{\alpha+1} &= \big\{ \{n\}\cup A:\,n\kin\bn,\ A\kin\cF_\alpha,\
  n\kle A \big\} \cup \{\emptyset\}\\
  \cF_{\lambda} &= \big\{ A\kin\fin{\bn}:\,\E n\kleq\min A,\
  A\kin\cF_{\alpha_n} \big\}\ ,
\end{align*}
where in the last line $\lambda$ is a limit ordinal and
$\alpha_n\nearrow\lambda$ is the sequence of ordinals fixed in
advance. An easy induction shows that $\cF_\alpha$ is a compact,
hereditary and spreading family for all
$\alpha\kle\omega_1$. Moreover, $(\cF_\alpha)_{\alpha<\omega_1}$ is an
``almost'' increasing chain:
\begin{equation}
  \label{eq:fine-schreier-increasing}
  \V \alpha\kleq\beta\kle\omega_1\quad \E n\kin\bn\quad \V
  F\kin\cF_\alpha\qquad\text{if $n\kleq \min F$, then
  $F\kin\cF_\beta$}\ .
\end{equation}
This can be proved by an easy induction on $\beta$. We note also that
for $A\kin\cF_\alpha\ksetminus\MAX(\cF_\alpha)$ we have
$A\cup\{n\}\kin\cF_\alpha$ for all $n\kge\max A$.

The Schreier families can now be defined by setting
$\cS_\alpha\keq \cF_{\omega^\alpha}$ for all
$\alpha\kle\omega_1$. This is not exactly how the Schreier families
are usually defined, but it gives the same families provided we are
more careful when choosing the sequences $(\alpha_n)$ with
$1\kleq \alpha_n\nearrow\lambda$ for limit ordinals
$\lambda$. At any rate, what matters is that each $\cS_\alpha$ be a
compact, hereditary and spreading family with Cantor-Bendixson index
$\omega^\alpha\kplus 1$ (see~\eqref{eq:cb-index-schreier} in
Section~\ref{section:ordinal}). Note that $\cS_1$ is the usual
Schreier family $\cS$ given by
\[
\cS = \{ A\kin\fin{\bn}:\,\abs{A}\kleq \min A \}
\]
(provided that for $\lambda\keq\omega$ we chose the sequence
$\alpha_n\keq n$).

As usual we denote by $(e_i)$ the canonical (algebraic) basis of the
vector space $\coo$ of all eventually zero scalar sequences. For
$x\keq\sum x_ie_i\kin\coo$ and for $A\ksubset \bn$ we write $Ax$ for
the obvious projection of $x$ onto $\spn \{ e_i:\,i\kin A\}$:
\[
Ax = \sum_{i\in A}x_ie_i\ .
\]
We are now ready to recall the definitions of certain Tsirelson type
spaces. For a compact, hereditary family $\cF\ksubset\fin{\bn}$ and
for $c\kin (0,1)$ there is a unique least norm on $\coo$, denoted by
$\norm{\cdot}_{\cF,c}$ such that
\[
\norm{x}_{\cF,c} = \norm{x}_\infty\vee c\kcdot\sup \Big\{ \sum
_{i=1}^n \norm{A_ix}_{\cF,c}:\,n\kin\bn,\ A_1,\dots,A_n\text{ is
  $\cF$-admissible} \Big\}
\]
for all $x\kin\coo$.
We shall write $T_{\cF,c}$ for the completion of $\coo$ in this
norm. Note that the (algebraic) basis $(e_i)$ of $\coo$ becomes a
$1$-unconditional (Schauder) basis of $T_{\cF,c}$.

For a non-zero, countable ordinal $\alpha$ and for $c\kin(0,1)$ the
space $T_{\cS_\alpha,c}$ is the \emph{Tsirelson space of order
  $\alpha$ with parameter $c$ }and we shall denote it by
$T_{\alpha,c}$. We further simplify notation in the case
$c\keq\frac12$ by letting $T_\alpha\keq T_{\alpha,\frac12}$. When
$\alpha\keq 1$ this is just (the dual of) the original Tsirelson
space~\cite{Tsirelson,FJ:74}.

We gather some properties of Tsirelson spaces in the next
proposition. In particular, we note that the unit vector bases of
$T_\alpha$ and $\di{T}{*}{\alpha}$ satisfy the conditions required in
Theorem~\ref{thm:universal}.

\begin{prop}[\cite{CJT}, \cite{LeTa:03}]
  \label{prop:tsirelson-properties}
  Let $\alpha$ be a non-zero, countable ordinal. The Tsirelson space
  $T_\alpha$ is a reflexive Banach space and $(e_i)$ is a
  $1$-unconditional, $1$-right-dominant and $B$-block-stable basis for
  $T_\alpha$, where $B$ is a constant independent of $\alpha$. 
  
  The biorthogonal functionals $(e^*_i)$ form a $1$-unconditional,
  $1$-left dominant and $B$-block-stable basis for
  $T_\alpha^*$. Moreover, $(e^*_i)$ is $D$-dominated by $(e_i)$, where
  $D$ is a universal constant.
\end{prop}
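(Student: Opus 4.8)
The proposition collects four facts about $T_\alpha\keq T_{\cS_\alpha,1/2}$: reflexivity; that $(e_i)$ is a $1$-unconditional, $1$-right-dominant, $B$-block-stable basis of $T_\alpha$; that $(e_i^*)$ is a $1$-unconditional, $1$-left-dominant, $B$-block-stable basis of $T_\alpha^*$; and that $(e_i^*)$ is $D$-dominated by $(e_i)$ --- with $B,D$ independent of $\alpha$. The plan is to treat these in that order. Everything except block-stability is soft; the one genuinely delicate point is keeping the constants uniform in $\alpha$.

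\emph{Unconditionality and reflexivity.} First I would realise the norm of $T_\alpha$ as the pointwise increasing limit of the norms $\norm{\cdot}_n$, where $\norm{x}_0\keq\norm{x}_\infty$ and $\norm{x}_{n+1}$ equals the right-hand side of the implicit norm equation with $\norm{\cdot}_n$ substituted inside (this limit exists because $\norm{z}_n\kleq\norm{z}_1$ for all $n$, the sets in an admissible family being disjoint). Each $\norm{\cdot}_n$ is monotone in the $\abs{x_i}$ and invariant under sign changes, hence $1$-unconditional, so the limit is too; this gives $1$-unconditionality of $(e_i)$ in $T_\alpha$ and, automatically, of $(e_i^*)$ in $T_\alpha^*$. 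For reflexivity, by James's theorem for spaces with an unconditional basis it suffices to show that $T_\alpha$ contains no copy of $c_0$ and no copy of $\ell_1$. The first follows from the lower estimate $\norm{\sum_{j=1}^J y_j}\kgeq\frac12\sum_{j=1}^J\norm{y_j}$, valid for any successive blocks $y_1,\dots,y_J$ whose minimal supports form a set in $\cS_\alpha$ --- and $J$ may be taken arbitrarily large since, for $\alpha\kgeq1$, $\cS_\alpha$ contains arbitrarily long finite sets. The second follows from a repeated-averaging argument producing, inside any normalised block sequence of $(e_i)$, normalised averages of arbitrarily small norm --- possible because $\cS_\alpha$ has countable Cantor--Bendixson index $\omega^\alpha\kplus1$. (Alternatively one may simply quote that $T_{\cF,c}$ is reflexive whenever $\cF$ is compact hereditary and $c\kin(0,1)$.)

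\emph{Right- and left-dominance.} For $1$-right-dominance of $(e_i)$, fix $m_1\kleq m_2\kleq\cdots$ and $n_1\kleq n_2\kleq\cdots$ with $m_i\kleq n_i$, together with scalars $(a_i)$; given an $\cS_\alpha$-admissible sequence of sets realising $\norm{\sum a_ie_{m_i}}_n$, transport it along $m_i\mapsto n_i$. Since $\cS_\alpha$ is \emph{spreading}, the transported sequence is again $\cS_\alpha$-admissible, so induction on $n$ yields $\norm{\sum a_ie_{m_i}}_n\kleq\norm{\sum a_ie_{n_i}}_n$ and hence the same inequality in $T_\alpha$. Left-dominance of $(e_i^*)$ is then the dual statement: for $1$-unconditional bases, domination of $(e_{m_i})$ by $(e_{n_i})$ with constant $1$ in $T_\alpha$ forces domination of $(e_{n_i}^*)$ by $(e_{m_i}^*)$ with constant $1$ in $T_\alpha^*$.

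\emph{Block-stability and the domination of $(e_i^*)$ by $(e_i)$.} For the domination I would exploit $c\keq\frac12$ directly: from $\norm{x}_{T_\alpha}\kgeq\norm{x}_\infty$ and $\norm{x}_{T_\alpha}\kgeq\frac12\sum_j\norm{A_jx}_{T_\alpha}$ over $\cS_\alpha$-admissible $(A_j)$, any functional norming $\sum a_ie_i^*$ can be estimated against $\norm{\sum a_ie_i}_{T_\alpha}$, the resulting constant $D$ depending only on $\frac12$ --- which is precisely why $D$ is independent of $\alpha$. Block-stability of $(e_i)$ and of $(e_i^*)$ is the substantive part, and I expect it to be the main obstacle: its heart is the Tsirelson-space combinatorial fact (as in~\cite{CJT} for $\alpha\keq1$) that the norm of a normalised block sequence of $(e_i)$ in $T_\alpha$ is determined, up to a universal factor, by the positions of its blocks, insensitively to bounded displacements --- the mechanism being that, given an $\cS_\alpha$-admissible decomposition witnessing the norm of a combination of one normalised block basis, one builds a matching decomposition for an interleaved one, using the spreading and almost-monotonicity properties of the chain $(\cS_\alpha)_\alpha$ (cf.~\eqref{eq:fine-schreier-increasing}). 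The careful bookkeeping --- and the reason this belongs in~\cite{CJT,LeTa:03} rather than being a one-liner --- is that every constant so produced must be traced back only to those combinatorial properties and to the fixed parameter $\frac12$, so that $B$ and $D$ may indeed be chosen independent of $\alpha$.
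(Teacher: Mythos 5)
Most of your outline runs parallel to the paper's treatment: block-stability is deferred to \cite{CJT} and \cite{LeTa:03}, which is exactly what the paper does (the proposition is attributed to those papers), and your sketches of $1$-unconditionality, reflexivity, $1$-right-dominance via the spreading property of $\cS_\alpha$, and $1$-left-dominance of $(e_i^*)$ by duality are correct and correspond to what the paper dismisses as ``immediate from the definition''.

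The genuine gap is the last statement, that $(e_i^*)$ is $D$-dominated by $(e_i)$ with $D$ independent of $\alpha$. Your paragraph on this point (``any functional norming $\sum a_ie_i^*$ can be estimated against $\norm{\sum a_ie_i}_{T_\alpha}$, the resulting constant depending only on $\frac12$'') is a restatement of the conclusion, not an argument. The claim is exactly the inequality $\abs{\sum a_ib_i}\kleq D\,\norm{\sum a_ie_i}_{T_\alpha}\norm{\sum b_ie_i}_{T_\alpha}$, which is equivalent to a lower $\ell_2$-estimate for the unit vector basis of $T_\alpha$, and this has real content: it does not follow formally from $\norm{\cdot}_{T_\alpha}\kgeq\norm{\cdot}_\infty$ together with the admissible lower estimate with constant $\frac12$. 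One needs a further combinatorial input, for instance that every finite set of coordinates contains an $\cS_\alpha$-admissible family of singletons of proportional size (via $\cS_1\ksubset\cS_\alpha$ and the ``second half'' trick), fed into a level-set decomposition of the coefficient sequences; alternatively, the paper's route is to quote \cite[Proposition~V.10]{CS}, that the unit vector basis of $T_1$ dominates that of $\ell_q$ for every $q\kge1$, dualize through $\ell_2$, and then use $\cS_1\ksubset\cS_\alpha$ (so the basis of $T_\alpha$ $1$-dominates that of $T_1$) to make the constant independent of $\alpha$. Note in particular that the uniformity in $\alpha$ comes from this containment of Schreier families, not merely from the fixed parameter $c\keq\frac12$ as your sketch suggests; as written, that step of your proposal is missing its proof.
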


It is shown in~\cite{CJT} that Tsirelson's space $T_1$ is block-stable
(see also~\cite[Proposition~II.4]{CS}). Their argument easily carries
over to higher order Tsirelson spaces giving the same constant. A
proof is given in~\cite{LeTa:03} for an even larger class of Tsirelson type
spaces.

It is proved in~\cite[Proposition~V.10]{CS} that the unit vector basis
$(e_i)$ of Tsirelson's space $T_1$ dominates the unit vector basis of
$\ell_q$ for all $q\kge 1$. The last statement of
Proposition~\ref{prop:tsirelson-properties} now follows
immediately. (Note that $\cS_1\ksubset \cS_\alpha$, and hence the unit
vector basis of $T_1$ is $1$-dominated by the unit vector basis of
$T_\alpha$ for any $1\kleq\alpha\kle\omega_1$.)

The rest of the properties claimed in
Proposition~\ref{prop:tsirelson-properties} are immediate from the
definition of the higher order Tsirelson spaces.

We end this section by stating a combinatorial theorem of Pudl\' ak
and R\" odl which also follows from infinite Ramsey theory. This has
nothing to do with Tsirelson spaces, but as it concerns families of
finite subsets of $\bn$ this section is an appropriate place for it.

\begin{thm}[\cite{PuRo:82}]
  \label{thm:thin-ramsey}
  Let $\cF\ksubset\fin{\bn}$ be a thin family. Whenever each element
  of $\cF$  is coloured red or blue, there is an infinite subset $M$
  of $\bn$ such that $\cF\cap\fin{M}$ is monochromatic, where
  $\fin{M}$ denotes the set of all finite subsets of $M$.
\end{thm}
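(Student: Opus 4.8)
The plan is to lift the two-colouring of the thin family $\cF$ to a Borel colouring of the space $\infin{\bn}$ of all infinite subsets of $\bn$ --- taken, as in the rest of this section, with the topology it inherits from $\{0,1\}^\bn$ --- and then to appeal to the classical infinite Ramsey theorem (Nash--Williams for open sets, or its Galvin--Prikry refinement for Borel sets). Write $s\sqsubseteq A$ to mean that $s\kin\fin{\bn}$ is an \emph{initial segment} of $A\kin\infin{\bn}$, \ie $s\ksubseteq A$ and $A\cap[1,\max s]\keq s$. The single structural fact I would extract from thinness at the very start is this: every $A\kin\infin{\bn}$ has \emph{at most one} initial segment belonging to $\cF$. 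Indeed, two distinct initial segments of a given $A$ are comparable under $\sqsubseteq$, hence comparable under inclusion, so they cannot both lie in the thin family $\cF$.

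With this in hand I would define a partition $\infin{\bn}\keq\mathcal{R}\cup\mathcal{B}\cup\mathcal{N}$, where $\mathcal{R}$ (respectively $\mathcal{B}$) is the set of those $A$ whose unique $\cF$-initial segment is red (respectively blue), and $\mathcal{N}$ is the set of those $A$ having no initial segment in $\cF$ at all; the previous paragraph guarantees this is well defined. Since $\mathcal{R}$ is the union of the sets $\{A\kin\infin{\bn}:s\sqsubseteq A\}$ over all red $s\kin\cF$, and since each $\{A:s\sqsubseteq A\}\keq\{A:A\cap[1,\max s]\keq s\}$ is clopen in $\infin{\bn}$, the sets $\mathcal{R}$ and $\mathcal{B}$ are open and $\mathcal{N}$ is closed; in particular all three are Borel. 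Now apply the Galvin--Prikry theorem to this finite Borel partition (equivalently, apply the Nash--Williams open Ramsey theorem twice in succession): one obtains an infinite $M\ksubset\bn$ with $\infin{M}$ contained entirely in one of $\mathcal{R}$, $\mathcal{B}$, $\mathcal{N}$. Finally I would read the conclusion back off $\cF$. If $\infin{M}\ksubseteq\mathcal{N}$ then $\cF\cap\fin{M}\keq\emptyset$: any $s\kin\cF\cap\fin{M}$ can be completed to some $A\kin\infin{M}$ with $s\sqsubseteq A$ (as $M$ is infinite), whence $A\notin\mathcal{N}$, a contradiction. If $\infin{M}\ksubseteq\mathcal{R}$ then $\cF\cap\fin{M}$ contains no blue set: a blue $s\kin\cF\cap\fin{M}$, completed as before to $A\kin\infin{M}$ with $s\sqsubseteq A$, would be \emph{the} unique $\cF$-initial segment of $A$ and hence place $A$ in $\mathcal{B}$. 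The case $\infin{M}\ksubseteq\mathcal{B}$ is symmetric. So in every case $\cF\cap\fin{M}$ is monochromatic, as required.

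The one point that calls for genuine care is this first step: it is precisely thinness that makes the lifted colouring \emph{well defined}, and one also has to verify --- routinely --- that the resulting colour classes are Borel (two open, one closed). Once those observations are in place the Ramsey theorem supplies all the real content, and the passage back to $\cF$ uses only the same uniqueness observation together with the triviality that a finite subset of an infinite set $M$ extends to an infinite subset of $M$. If one prefers to avoid quoting Galvin--Prikry, the required instance can instead be obtained by running the usual Nash--Williams combinatorial-forcing argument directly on $\cF$; but invoking the Borel Ramsey theorem is the cleanest route here.
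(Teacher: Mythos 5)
Your proof is correct. Note that the paper does not actually prove this statement: it cites Pudl\'ak--R\"odl and merely remarks that the result ``also follows from infinite Ramsey theory'', and your argument is precisely a clean implementation of that remark. Thinness is used exactly where it must be --- to make the lifted colouring of $\infin{\bn}$ well defined, since an infinite set has at most one initial segment in $\cF$ --- and the three colour classes are then two open sets and one closed set, so Nash--Williams applied twice (or Galvin--Prikry once) produces the homogeneous $M$; the passage back to $\cF\cap\fin{M}$ uses only the same uniqueness observation and the fact that a finite subset of $M$ extends to an infinite subset of $M$ of which it is an initial segment. The only degenerate point worth a word is $\emptyset\kin\cF$, in which case thinness forces $\cF\keq\{\emptyset\}$ and the conclusion is trivial; this does not affect the argument.
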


\section{Ordinal indices}
\label{section:ordinal}

The main aim of this section is two introduce two ordinal indices in
Banach spaces: the weak index and the block index. The former will be
related to the Szlenk index later on. In order to avoid tiresome
repetitions we begin with defining a class of ordinal indices of trees
on arbitrary sets. We then introduce the said indices as special
cases and prove a number of their properties to be used in the
sequel.

Let $X$ be an arbitrary set. We set $X^{<\omega}\keq
\bigcup_{n=0}^\infty X^n$, the set of all finite sequences in $X$,
which includes the sequence of length zero denoted by $\emptyset$. For
$x\kin X$ we shall write $x$ instead of $(x)$, \ie we identify $X$
with sequences of length~$1$ in $X$. A \emph{tree on $X$ }is a 
non-empty 
subset $\cF$ of $X^{<\omega}$ closed under taking initial segments: if
$(x_1,\dots,x_n)\kin \cF$ and $0\kleq m\kleq n$, then
$(x_1,\dots,x_m)\kin\cF$. A tree $\cF$ on $X$ is \emph{hereditary }if
every subsequence of every member of $\cF$ is also in $\cF$.

Given $\vx\keq (x_1,\dots,x_m)$ and $\vy\keq(y_1,\dots,y_n)$ in
$X^{<\omega}$, we write $(\vx,\vy)$ for the concatenation of $\vx$
and $\vy$:
\[
(\vx,\vy)=(x_1,\dots,x_m,y_1,\dots,y_n)\ .
\]
Given $\cF\ksubset X^{<\omega}$ and $\vx\kin X^{<\omega}$, we let
\[
\cF(\vx)= \{\vy\kin X^{<\omega}:\,(\vx,\vy)\kin\cF\}\ .
\]
Note that if $\cF$ is a tree on $X$, then so is $\cF(\vx)$
(unless it is empty) 
. Moreover,
if $\cF$ is hereditary, then so is $\cF(\vx)$ and $\cF(\vx)\ksubset
\cF$.

Let $X^\omega$ denote the set of all (infinite) sequences in $X$. Fix
$S\ksubset X^\omega$. For a tree $\cF$ on $X$ \emph{the $S$-derivative
  $\cF_S'$ of $\cF$ }consists of all finite sequences $\vx\kin
X^{<\omega}$ for which there is a sequence $(y_i)_{i=1}^\infty\kin S$
with $(\vx,y_i)\kin\cF$ for all $i\kin\bn$. Note that
$\cF'_S\ksubset\cF$ and that $\cF'_S$ is also a tree
(unless it is empty) 
. We then define
higher order derivatives $\cF\sder{\alpha}$ for ordinals
$\alpha\kle\omega_1$ by recursion as follows.

\begin{align*}
  \cF\sder{0} &= \cF\\
  \cF\sder{\alpha+1} &= \big( \cF\sder{\alpha}\big)'_S\qquad\text{for
    all }\alpha\kle\omega_1\\
  \cF\sder{\lambda} &= \bigcap _{\alpha <\lambda}
  \cF\sder{\alpha}\qquad \text{for a limit ordinal
  }\lambda\kle\omega_1\ .
\end{align*}

It is clear that $\cF\sder{\alpha}\ksupset \cF\sder{\beta}$ whenever
$\alpha\kleq\beta$ and that $\cF\sder{\alpha}$ is a tree
(or the empty set) 
 for all
$\alpha$. An easy induction also shows that
\[
\big(\cF(\vx)\big)\sder{\alpha} = \big(\cF\sder{\alpha}\big)
(\vx)\qquad \text{for all }\vx\kin X^{<\omega},\ \alpha\kle\omega_1\ .
\]

We now define \emph{the $S$-index $\si(\cF)$ of $\cF$ }by
\[
\si(\cF) = \min \{\alpha\kle\omega_1:\,\cF\sder{\alpha}\keq \emptyset
\}
\]
if there exists $\alpha\kle\omega_1$ with $\cF\sder{\alpha}\keq
\emptyset$, and $\si(\cF)\keq\omega_1$ otherwise.

\begin{rem}
  If $\lambda$ is a limit ordinal and
  $\cF\sder{\alpha}\kneq\emptyset$ for all $\alpha\kle\lambda$, then
  in particular $\emptyset\kin\cF\sder{\alpha}$ for all
  $\alpha\kle\lambda$, and hence
  $\cF\sder{\lambda}\kneq\emptyset$. This show that $\si(\cF)$ is
  always a successor ordinal.
\end{rem}

\begin{exs}
  1.~A hereditary family $\cF\ksubset \fin{\bn}$ can be thought of
  as a tree on $\bn$: a set $F\keq\{m_1,\dots,m_k\}\kin\fin{\bn}$ is
  identified with $(m_1,\dots,m_k)\kin\bn^{<\omega}$ (recall that
  $m_1\kle\dots\kle m_k$ by our convention of always listing the
  elements of a subset of $\bn$ in increasing order).

  Let $S$ be the set of all strictly increasing sequences in $\bn$. In
  this case the $S$-index of a compact, hereditary family
  $\cF\ksubset\fin{\bn}$ is nothing else but the Cantor-Bendixson
  index of $\cF$ as a compact topological space, which we will denote
  by $\cbi(\cF)$. We will also use the term Cantor-Bendixson
  derivative instead of $S$-derivative and use the notation
  $\cF'_{\mathrm{CB}}$ and $\cF\cbder{\alpha}$.

  \noindent
  2.~If $X$ is an arbitrary set and $S\keq X^\omega$, then the
  $S$-index of a tree $\cF$ on $X$ is what is usually called \emph{the
  order of $\cF$ }(or \emph{the height of $\cF$}) denoted by
  o$(\cF)$. Note that in this case the $S$-derivative of $\cF$
  consists of all finite sequences $\vx\kin X^{<\omega}$ for which
  there exists $y\kin X$ such that $(\vx,y)\kin\cF$.

  The function o$(\cdot)$ is the largest index: for any
  $S\ksubset X^\omega$ we have o$(\cF)\kgeq \si(\cF)$.
\end{exs}

We say that $S\ksubset X^\omega$ \emph{contains diagonals }if every
subsequence of every member of $S$ also belongs to $S$ and for
every sequence $(\vx_n)$ in $S$ with $\vx_n\keq
(x_{n,i})_{i=1}^\infty$ there exist $i_1\kle i_2\kle\dots$ in $\bn$
such that $(x_{n,i_n})_{n=1}^\infty$ belongs to~$S$.

If $S$ contains diagonals, then the $S$-index of a tree on $X$ may be
measured via the Cantor-Bendixson index of the fine Schreier families
$\big(\cF_{\alpha}\big)_{\alpha<\omega_1}$ introduced earlier. An easy
induction argument shows that
\begin{equation}
  \label{eq:cb-index-schreier}
  \cbi(\cF_\alpha) = \alpha+1\qquad \text{for all }\alpha\kle\omega_1\
  .
\end{equation}
Given a tree $\cF\ksubset\fin{\bn}$ on $\bn$, a family
$(x_F)_{F\in\cF\setminus\{\emptyset\}}$ in $X$ will always be viewed
as the tree
\[
\Big\{ \big( x_{\{m_1\}}, x_{\{m_1,m_2\}}, \dots,x_{\{m_1,
  m_2,\dots,m_k\}} \big):\,k\kgeq 0,\ \{m_1,\dots,m_k\}\kin\cF \Big\}\
.
\]
on $X$. 

\begin{prop}
  \label{prop:S-index-fine-schreier}
  Let $X$ be an arbitrary set and let $S\ksubset X^\omega$. If $S$
  contains diagonals, then for a tree $\cF$ on $X$ and for a countable
  ordinal $\alpha$ the following are equivalent.
  \begin{mylist}{(ii)}
  \item
    $\alpha<\si(\cF)$.
  \item
    There is a family $\big(x_F\big)
    _{F\in\cF_\alpha\setminus\{\emptyset\}} \ksubset \cF$ such that
    for all $F\kin \cF_\alpha\ksetminus \MAX(\cF_\alpha)$ the sequence
    $\big(x_{F\cup\{n\}}\big) _{n>\max F}$ is in~$S$.
  \end{mylist}
\end{prop}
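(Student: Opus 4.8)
The plan is to prove both implications by transfinite induction on $\alpha$, exploiting the recursive definition of the fine Schreier families $\cF_\alpha$ and matching it step-by-step with the structure of the $S$-derivative. The statement is really the assertion that the ``canonical'' obstruction to $\cF\sder{\alpha}$ being empty is precisely a copy of $\cF_\alpha$ living inside $\cF$ whose non-maximal nodes branch into $S$. Throughout I would freely use the recursion $\cF_0=\{\emptyset\}$, $\cF_{\alpha+1}=\{\{n\}\cup A: n\le A,\ A\in\cF_\alpha\}\cup\{\emptyset\}$, $\cF_\lambda=\{A: \exists n\le \min A,\ A\in\cF_{\alpha_n}\}$, together with the identity $(\cF(\vx))\sder{\alpha}=(\cF\sder{\alpha})(\vx)$ already recorded in the excerpt, and the fact that $S$ contains diagonals.

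\medskip

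\noindent\textbf{Proof sketch.}
We argue by induction on $\alpha$. For $\alpha=0$ both statements are trivially equivalent: $0<\si(\cF)$ means $\cF\sder 0=\cF\neq\emptyset$, and $(x_F)_{F\in\cF_0\setminus\{\emptyset\}}$ is the empty family, so condition (ii) just asserts $\cF\neq\emptyset$ (there are no non-maximal $F$ to check).

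Successor step $\alpha+1$: Suppose $\alpha+1<\si(\cF)$, so $\cF\sder{\alpha+1}=(\cF\sder{\alpha})'_S\neq\emptyset$; in particular $\emptyset\in\cF\sder{\alpha+1}$ (by the Remark, since this is a nonempty tree), so there is a sequence $(y_i)\in S$ with $y_i\in\cF\sder{\alpha}$ for all $i$. For each such $i$ set $x_{\{i\}}=y_i$ and apply the inductive hypothesis to the tree $(\cF(y_i))$: since $(\cF(y_i))\sder{\alpha}=(\cF\sder{\alpha})(y_i)\ni\emptyset$, we have $\alpha<\si(\cF(y_i))$, so we obtain a family $(x^{(i)}_F)_{F\in\cF_\alpha\setminus\{\emptyset\}}\subset\cF(y_i)$ with non-maximal nodes branching into $S$. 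Now define, for $G=\{i\}\cup F\in\cF_{\alpha+1}$ with $i\le F$ and $F\in\cF_\alpha$ (and $F$ possibly empty), the vector $x_G=x^{(i)}_F$ (with $x^{(i)}_\emptyset:=y_i$). One checks $x_G\in\cF$ using $(y_i,x^{(i)}_F)\in\cF$, and that the new non-maximal nodes of this $\cF_{\alpha+1}$-indexed family are exactly the old ones plus the node $(x_{\{i\}})_i=(y_i)_i\in S$; this uses the observation (stated in the excerpt) that $F\in\cF_\alpha\setminus\MAX(\cF_\alpha)$ implies $F\cup\{n\}\in\cF_\alpha$ for $n\ge\max F$, so the maximal/non-maximal structure of $\cF_{\alpha+1}$ is inherited correctly. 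Conversely, given a family as in (ii) for $\cF_{\alpha+1}$, restrict to the subtree below each $x_{\{i\}}$ to produce families witnessing $\alpha<\si(\cF(x_{\{i\}}))$, hence $x_{\{i\}}\in\cF\sder{\alpha}$; since $(x_{\{i\}})_i\in S$ this gives $\emptyset\in\cF\sder{\alpha+1}$, i.e. $\alpha+1<\si(\cF)$.

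Limit step $\lambda$: Recall $\cF_\lambda=\bigcup_n\{A\in\cF_{\alpha_n}: \min A\ge n\}$ where $\alpha_n\nearrow\lambda$, and $\cF\sder{\lambda}=\bigcap_{\alpha<\lambda}\cF\sder{\alpha}$. If $\lambda<\si(\cF)$, then $\alpha_n<\si(\cF)$ for every $n$, so for each $n$ the inductive hypothesis yields a family $(x^{(n)}_F)_{F\in\cF_{\alpha_n}\setminus\{\emptyset\}}\subset\cF$ with non-maximal nodes in $S$. We glue these together: for $G\in\cF_\lambda$ with $G\in\cF_{\alpha_n}$ and $\min G\ge n$ (taking $n=\min G$, say), set $x_G=x^{(n)}_G$. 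Again $x_G\in\cF$, and the non-maximal nodes of the resulting $\cF_\lambda$-family are among the non-maximal nodes of the various $\cF_{\alpha_n}$-families, hence in $S$. The converse is analogous: a family witnessing (ii) for $\cF_\lambda$ restricts, for each fixed $n$, to one witnessing $\alpha_n<\si(\cF)$ (using that $\cF_{\alpha_n}$ ``sits inside'' $\cF_\lambda$ above level $n$, together with the almost-monotonicity \eqref{eq:fine-schreier-increasing} of the fine Schreier families to align the two indexings), so $\lambda=\sup_n\alpha_n\le\si(\cF)$, and since $\si(\cF)$ is a successor (the Remark) we in fact get $\lambda<\si(\cF)$.

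\medskip

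\noindent\textbf{Main obstacle.}
The routine-looking but genuinely delicate point is the bookkeeping in the successor and limit steps that matches the combinatorial ``shape'' of $\cF_\alpha$ — in particular which nodes are maximal — with the iterated $S$-derivative, while keeping the branchings inside $S$. This is exactly where ``$S$ contains diagonals'' is needed: when one must simultaneously build a single branch of the $\cF_\lambda$-tree (or reassemble finitely-branching data coming from different $\alpha_n$), the witnessing sequences produced at different stages must be diagonalized into one member of $S$. Getting the indexing conventions for $\cF_{\alpha+1}$ and $\cF_\lambda$ to line up with the definition of the tree associated to a family $(x_F)_{F\in\cF\setminus\{\emptyset\}}$ (as spelled out just before the Proposition) is the part that requires care rather than ingenuity; everything else is a direct unwinding of the recursions.
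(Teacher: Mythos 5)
Your successor step is essentially the paper's construction and is fine, and your plan to prove (ii)$\Rightarrow$(i) by the same induction on $\alpha$ can be made to work, although it silently requires re-indexing the restricted families by spreads (the subtree below $x_{\{i\}}$ is indexed only by $\{F\kin\cF_\alpha:\,i\kleq F\}$, not by all of $\cF_\alpha$); the paper avoids this entirely by a direct induction on the derivation level $\beta$, showing that $F\keq\{m_1,\dots,m_k\}\kin(\cF_\alpha)\cbder{\beta}$ forces $\big(x_{\{m_1\}},\dots,x_F\big)\kin\cF\sder{\beta}$, whence $\si(\cF)\kgeq\cbi(\cF_\alpha)\keq\alpha\kplus1$.

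The genuine gap is in the limit step of (i)$\Rightarrow$(ii). Your gluing ``for $G\kin\cF_\lambda$ with $G\kin\cF_{\alpha_n}$ and $\min G\kgeq n$ (taking $n\keq\min G$, say), set $x_G\keq x^{(n)}_G$'' fails in two ways. First, membership in $\cF_\lambda$ only provides \emph{some} $n\kleq\min G$ with $G\kin\cF_{\alpha_n}$; since the families $\cF_{\alpha_n}$ are only ``almost'' increasing, $G$ need not lie in $\cF_{\alpha_{\min G}}$, so the assignment is not even well defined (choosing the least such $n$ fixes this but not the next point). Second, and fatally, the claim that the non-maximal nodes of the glued family are among the nodes of the individual families $x^{(n)}$ is false: at the root, the required sequence $(x_{\{m\}})_{m\geq1}\keq\big(x^{(n(m))}_{\{m\}}\big)_{m\geq1}$ mixes vectors taken from \emph{different} families (and the same happens at any node where the choice of $n$ varies along the one-point extensions), and nothing in the hypotheses places such a mixed sequence in $S$. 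This is precisely the point where ``$S$ contains diagonals'' must actually be invoked, and it forces a different construction: as in the paper, one first diagonalizes the root sequences to obtain $i_1\kle i_2\kle\dots$ with $\big(y_{n,\{i_n\}}\big)_{n}\kin S$, then defines the entire subtree rooted at the $n$-th child by the shift $\{n,m_2,\dots,m_k\}\mapsto y_{n,\{i_n,\,i_n+m_2,\dots,\,i_n+m_k\}}$, choosing the $i_n$ in addition (via~\eqref{eq:fine-schreier-increasing}) so that the shifted index sets remain in $\cF_{\alpha_n}$, which is what makes this assignment well defined; deeper branchings are then spreads of single nodes of a single family $y^{(n)}$ and lie in $S$ by subsequence-closure. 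Your ``main obstacle'' paragraph correctly names diagonalization as the crux, but the construction you wrote down does not implement it, so the limit case as stated does not go through.
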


\begin{proof}
  ``(ii)$\Rightarrow$(i)'' An easy induction on $\beta\kle\omega_1$
  shows that for all $F\keq\{m_1,\dots,m_k\}
  \kin(\cF_\alpha)\cbder{\beta}$ we have
  \[
  \big( x_{\{m_1\}}, x_{\{m_1,m_2\}}, \dots,x_{\{m_1,
    m_2,\dots,m_k\}} \big) \in \cF\sder{\beta}\ .
  \]
  It follows that $\si(\cF) \kgeq \cbi(\cF_{\alpha})\kge\alpha$.

  \noindent
  ``(i)$\Rightarrow$(ii)'' We prove this by induction on
  $\alpha$. When $\alpha\keq 0$, statement~(ii) says that
  $\emptyset\kin\cF$, which does follow from $0\kle\si(\cF)$.

  Next assume that $\alpha\kplus 1\kle\si(\cF)$. Then
  $\cF\sder{\alpha+1}\kneq\emptyset$, so in particular we have
  $\emptyset\kin\cF\sder{\alpha+1}$. It follows that there is a
  sequence $(x_i)_{i=1}^\infty\kin S$ such that
  $x_i\kin\cF\sder{\alpha}$ for all $i\kin\bn$. Hence
  $\big(\cF\sder{\alpha}\big)(x_i)\keq
  \big(\cF(x_i)\big)\sder{\alpha}$ is non-empty, and
  $\si\big(\cF(x_i)\big)\kge \alpha$. By the induction hypothesis, for
  each $i\kin\bn$ there is a family $\big(y_{i,F}\big)
  _{F\in\cF_\alpha\setminus\{\emptyset\}} \ksubset \cF(x_i)$ such that
  for all $F\kin \cF_\alpha\ksetminus \MAX(\cF_\alpha)$ the sequence
  $\big(y_{i,F\cup\{n\}}\big) _{n>\max F}$ is in~$S$.

  Now for each $F\keq\{m_1,\dots,m_k\}\kin\cF_{\alpha+1}$ define 
  \[
  x_F = \begin{cases}
    x_i & \text{if }k\keq 1\text{ and }m_1\keq i\ ,\\
    y_{i,\{m_2,m_3,\dots, m_k\}} & \text{if }k\kge 1\text{ and
    }m_1\keq i\ .\\
  \end{cases}
  \]
  It is routine to verify that statement~(ii) with $\alpha\kplus 1$
  replacing $\alpha$ holds.

  Finally, let $\lambda$ be a limit ordinal, and assume that
  $\lambda\kle\si(\cF)$. Let $(\alpha_n)$ be the sequence of ordinals
  with $1\kleq \alpha_n\nearrow \lambda$ chosen in the definition of
  the fine Schreier family $\cF_\lambda$. By the induction hypothesis,
  for each $n\kin\bn$ there is a family $\big(y_{n,F}\big)
  _{F\in\cF_{\alpha_n}\setminus\{\emptyset\}} \ksubset\cF$ such that
  for all $F\kin \cF_{\alpha_n}\ksetminus \MAX(\cF_{\alpha_n})$ the
  sequence $\big(y_{n,F\cup\{i\}}\big) _{i>\max F}$ is in~$S$.
  In particular we have $\big(y_{n,\{i\}}\big)_{i=1}^\infty\kin S$ for
  all $n\kin\bn$. Since $S$ contains diagonals there exist $i_1\kle
  i_2\kle\dots$ in $\bn$ such that
  $\big(y_{n,\{i_n\}}\big)_{n=1}^\infty\kin S$. We
  can also ensure that if $m\kleq n,\ F\kin\cF_{\alpha_m}$ and
  $i_n\kleq\min F$, then $F\kin\cF_{\alpha_n}$
  (see~\eqref{eq:fine-schreier-increasing}).

  Now for each $F\keq\{m_1,\dots,m_k\}\kin\cF_{\lambda}$ define
  \[
  x_F = \begin{cases}
    y_{n,\{i_n\}} & \text{if }k\keq 1\text{ and }m_1\keq n\ ,\\
    y_{n,\{i_n, i_n+m_2,i_n+m_3,\dots, i_n+m_k\}} & \text{if }k\kge
    1\text{ and }m_1\keq n\ .\\
  \end{cases}
  \]
  It is again routine to verify that statement~(ii) with $\lambda$ in
  the place of $\alpha$ follows.
\end{proof}

The set $S\keq X^\omega$ for an arbitrary set $X$, and the set $S$
used to define the Cantor-Bendixson index for a compact, hereditary
family in $\fin{\bn}$ trivially contain diagonals. This will also be
(mostly) the case in the following two examples of $S$-indices in
Banach spaces.

\begin{exs}
  1.~\emph{The weak index. }Let $X$ be a separable Banach space. Let
  $S$ be the set of all weakly null sequences in $S_X$, the unit
  sphere of $X$. We call the $S$-index of a tree $\cF$ on $S_X$
  \emph{the weak index of $\cF$ }and we shall denote it by
  $\wi(\cF)$. We shall use the term \emph{weak derivative }instead of
  $S$-derivative and use the notation $\cF'_{\mathrm{w}}$ and
  $\cF\wder{\alpha}$.

  When the dual space $X^*$ is separable, the weak topology on
  the unit ball $B_X$ of $X$ is metrizable. Hence in this case the set
  $S$ contains diagonals and
  Proposition~\ref{prop:S-index-fine-schreier} applies.

  \noindent
  2.~\emph{The block index. }Let $X$ be a Banach space with an FDD
  $E\keq(E_i)$. A \emph{block tree of $(E_i)$ in $Z$ }is a tree $\cF$
  on $S_X$ such that every element of $\cF$ is a (finite) block
  sequence of $(E_i)$. Let $S$ be the set of all normalized, infinite
  block sequences of $(E_i)$ in $Z$. We call the $S$-index of a block
  tree $\cF$ of $(E_i)$ \emph{the block index of $\cF$ }and we shall
  denote it by $\bli(\cF)$. We shall use the term \emph{block
  derivative }instead of $S$-derivative and use the notation
  $\cF'_{\mathrm{bl}}$ and $\cF\blder{\alpha}$. Note that the set $S$
  contains diagonals, and hence
  Proposition~\ref{prop:S-index-fine-schreier} applies.

  Note that $(E_i)$ is a shrinking FDD of $X$ if and only if every
  element of $S$ is weakly null. In this case we have
  \begin{equation}
    \label{eq:bli-wi}
      \bli(\cF) \leq \wi(\cF)
  \end{equation}
  for any block tree $\cF$ of $(E_i)$ in $Z$. The converse is false in
  general, but it is true up to perturbations and without the
  assumption that $(E_i)$ is shrinking (see the remark preceding
  Proposition~\ref{prop:compression} below).
\end{exs}

\begin{rem}
  If $(E_i)$ and $(F_i)$ are two different FDDs of the Banach space
  $X$, then the corresponding block indices they give rise to may well
  be different in general. However, it is clear that if $(F_i)$ is a
  blocking of $(E_i)$, then they do yield the same block index. Since
  this is exactly the kind of situation in which we shall use the
  block index in this paper, we did not incorporate the underlying FDD
  in the notation for block derivatives and for the block index.
\end{rem}

In the next section we will relate the Szlenk index to the weak index
of certain trees. In the rest of this section we prove two
results. The first one is a perturbation result: it concerns the weak
index of the `fattening' of a tree. The second result relates the
block index of block trees in Banach spaces to the Cantor-Bendixson
index of compact, hereditary families in $\fin{\bn}$. It is a kind
of discretization result.

Let $X$ be a separable Banach space. For a tree $\cF\ksubset
S_X^{<\omega}$ and for $\veb\keq(\ve_i)\ksubset(0,1)$ we write
\[
\cF^X_{\veb} = \big\{ (x_i)_{i=1}^n\kin S_X^{<\omega}:\,n\kin\bn,\ \E
(y_i)_{i=1}^n\kin \cF,\ \norm{x_i\kminus y_i}\kleq \ve_i\text{ for
}i\keq 1,\dots,n \big\}\ .
\]

\begin{prop}
  \label{prop:fat-index}
  Let $X\ksubset Y$ be Banach spaces with separable duals, and let
  $\cF\ksubset S_X^{<\omega}$ be a tree on $S_X$. Then for all
  $\veb\keq(\ve_i)\ksubset (0,1)$ we have $\wi(\cF^Y_{\veb})\kleq
  \wi(\cF^X_{5\veb})$.
\end{prop}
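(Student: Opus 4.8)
The plan is to compare the two "fattened" trees level by level, using the fact that $\wi$ is the $S$-index with $S$ the set of weakly null sequences in the relevant sphere, and that this index is characterized by iterated $S$-derivatives. The key observation I would exploit is that a weakly null sequence in $S_Y$ that is uniformly close to elements of $S_X$ can be replaced, after passing to a subsequence, by a genuinely weakly null sequence in $S_X$ that is still close (after a small adjustment of the tolerances). More precisely, I would prove the single-step inclusion that $(\cF^Y_{\veb})'_{\mathrm w}\subseteq (\cF^X_{\veb'})'_{\mathrm w}$ for a suitably enlarged $\veb'$, and then iterate it transfinitely. The factor $5$ on the right-hand side is the slack I expect to need to absorb: it comes from (a) the distance $\le\ve_i$ built into $\cF^Y_{\veb}$, (b) the distance $\le\ve_i$ needed to move the original branch of $\cF$ into $X$, and (c) a geometric series of further perturbations incurred when projecting/normalizing back onto $S_X$, together with the passage through a convex combination when using weak-to-norm closure arguments (Mazur). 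A constant of $5$ comfortably covers $1+1+$ a small tail.

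**The main step: descending one derivative.** Fix $\vx=(x_1,\dots,x_n)\in (\cF^Y_{\veb})'_{\mathrm w}$. By definition there is a weakly null sequence $(w_j)_{j}$ in $S_Y$ with $(\vx,w_j)\in\cF^Y_{\veb}$ for every $j$, and hence for each $j$ a sequence $(y^{(j)}_1,\dots,y^{(j)}_n,y^{(j)}_{n+1})\in\cF$ in $S_X$ with $\|x_i-y^{(j)}_i\|\le\ve_i$ and $\|w_j-y^{(j)}_{n+1}\|\le\ve_{n+1}$. Since $Y^*$ is separable, $B_Y$ is weakly metrizable, so after passing to a subsequence I may assume $y^{(j)}_{n+1}\to y_\infty$ weakly in $Y$; because $X$ is weakly closed in $Y$ (it is norm-closed and convex), $y_\infty\in X$, and $\|y_\infty\|\le 1$. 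The sequence $u_j:=y^{(j)}_{n+1}-y_\infty$ is then weakly null in $X$ with $\|u_j\|\le 2$; I would pass to a further subsequence and apply a small Mazur-type convex-combination/normalization to produce a weakly null sequence $(v_j)$ in $S_X$ with $\|w_j-v_j\|$ controlled by $\ve_{n+1}$ plus a term that I can make as small as I like (this is where a bit of the slack goes). One also needs $(\vx,v_j)$ to lie in $\cF^X_{\veb'}$, which it does because $(y^{(j)}_1,\dots,y^{(j)}_n,y^{(j)}_{n+1})\in\cF$ witnesses the first $n$ coordinates and $v_j$ is $\ve'_{n+1}$-close to $y^{(j)}_{n+1}$ after the adjustment. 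Hence $\vx\in(\cF^X_{\veb'})'_{\mathrm w}$.

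**Iteration and conclusion.** Having the one-step inclusion with $\veb\rightsquigarrow\veb'$ where $\veb'\le c\veb$ for a fixed modest constant $c$, I would run it along the transfinite derivative hierarchy: set $\veb^{(0)}=\veb$ and $\veb^{(k+1)}=c\,\veb^{(k)}$, but actually a cleaner route is to fix once and for all $\veb'=5\veb$ (or any $C\veb$ with $C$ large enough that the perturbation bookkeeping closes), observe $\cF^X_{\veb'}$ is \emph{monotone} in $\veb'$, and prove by induction on $\gamma\le\omega_1$ that $(\cF^Y_{\veb})\wder{\gamma}\subseteq(\cF^X_{5\veb})\wder{\gamma}$ — the successor step is exactly the one-step lemma applied to the trees $(\cF^Y_{\veb})\wder{\gamma}$ and $(\cF^X_{5\veb})\wder{\gamma}$ (noting $(\cG)\wder{\gamma}$ is again a tree on $S_X$, so the lemma applies verbatim with $\cF$ replaced by $\cG$), and the limit step is just intersection. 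In particular, if $(\cF^X_{5\veb})\wder{\gamma}=\emptyset$ then $(\cF^Y_{\veb})\wder{\gamma}=\emptyset$, giving $\wi(\cF^Y_{\veb})\le\wi(\cF^X_{5\veb})$.

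**Where the difficulty lies.** The transfinite induction is routine once the one-step lemma is in hand; the real content — and the step I expect to fight with — is the one-step lemma, specifically extracting from the weakly null sequence $(w_j)$ in $S_Y$ (whose approximants live in $S_X$) an \emph{honestly} weakly null sequence in $S_X$ staying uniformly close to $(w_j)$. The subtlety is that the natural approximants $y^{(j)}_{n+1}$ need not be weakly null: they can drift to a nonzero weak limit $y_\infty\in X$, and subtracting it destroys normalization. Repairing normalization while keeping weak nullity and controlling the norm distance — and doing it uniformly in $j$ — is exactly what forces the constant to grow from $2$ to $5$; I would handle it by first choosing $j$ large enough that $\|u_j\|$ is bounded away from $0$ (possible unless $w_j$ itself converges in norm, a trivial case), then setting $v_j=u_j/\|u_j\|$ and estimating $\|w_j-v_j\|\le\|w_j-y^{(j)}_{n+1}\|+\|y_\infty\|+\bigl|\,1-\|u_j\|\,\bigr|$, the last term tending to $0$ along the chosen subsequence. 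Bounding $\|y_\infty\|$ itself requires care (it need not vanish), which is why one cannot hope for a universal constant smaller than something like $1+\|y_\infty\|+\ve_{n+1}$; choosing the fattening parameters small at the start and absorbing everything into the factor $5$ is the clean way out.
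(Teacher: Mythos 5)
Your plan founders at the step you yourself identify as the real content: producing an honestly weakly null sequence in $S_X$ near the weakly null sequence $(w_j)\ksubset S_Y$. You propose to pass to a subsequence so that the nearby points $y^{(j)}_{n+1}\kin S_X$ converge weakly to some $y_\infty\kin X$ and then to subtract and renormalize. But the proposition only assumes that $X\ksubset Y$ have separable duals, not that they are reflexive, so the bounded sequence $(y^{(j)}_{n+1})$ need not have any weakly convergent subsequence: its weak$^*$ cluster points live in $X^{**}$ and in general lie outside $X$ (take $X\keq Y\keq\co$, $w_j\keq e_j$ and $y^{(j)}$ a normalization of $e_j\kplus\ve_{n+1}(e_1\kplus\dots\kplus e_{j-1})$; the only cluster point is $\ve_{n+1}\cdot(1,1,\dots)\knotin\co$). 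Separable dual gives weak metrizability of $B_Y$, not weak compactness, so "after passing to a subsequence I may assume $y^{(j)}_{n+1}\to y_\infty$ weakly" is unjustified, and there is no element of $X$ to subtract. This is exactly the obstruction the paper's Lemma~\ref{lem:w-null-perturbation} is built to overcome: one first invokes Zippin's theorem to embed $Y$ into a space with a \emph{shrinking} FDD $(E_i)$, then uses norming functionals of the finite-dimensional quotients $X/X_m$, where $X_m\keq X\cap\overline{\bigoplus_{j>m}E_j}$, to replace each $y^{(j)}_{n+1}$ (along a subsequence) by a nearby normalized vector in a far tail subspace $X_m$; shrinkingness of the FDD then yields weak nullity with no compactness at all. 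Incidentally, even if the weak limit $y_\infty$ did exist, your discussion of it is off: you say $\norm{y_\infty}$ "need not vanish" and propose to absorb it by "choosing the fattening parameters small at the start" --- but $\veb$ is given, not chosen, and a limit of size comparable to $1$ could never be hidden in the factor $5$. The correct (and needed) point is that $y^{(j)}_{n+1}\kminus w_j$ would converge weakly to $y_\infty$ with $\norm{y^{(j)}_{n+1}\kminus w_j}\kleq\ve_{n+1}$, so weak lower semicontinuity of the norm forces $\norm{y_\infty}\kleq\ve_{n+1}$; that is what would make your bookkeeping close, and you do not have it.

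There is also a structural flaw in the iteration. The inclusion you propose to prove by transfinite induction, $(\cF^Y_{\veb})\wder{\gamma}\ksubseteq(\cF^X_{5\veb})\wder{\gamma}$, is false already for $\gamma\keq 0$: $\cF^Y_{\veb}$ is a tree on $S_Y$ and its weak derivatives are computed using weakly null sequences in $S_Y$, while $\cF^X_{5\veb}$ lives on $S_X$; the parenthetical claim that $(\cF^Y_{\veb})\wder{\gamma}$ "is again a tree on $S_X$" is wrong, and for the same reason the assertion that $(\vx,v_j)\kin\cF^X_{\veb'}$ does not typecheck when $\vx\kin S_Y^{<\omega}$. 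A correct induction must compare a branch of $\cF^Y_{\veb}$ with a \emph{perturbed copy} of it in $S_X$ (with tolerances that do not accumulate along the derivation). The paper sidesteps this bookkeeping entirely: by Proposition~\ref{prop:S-index-fine-schreier}, $\alpha\kle\wi(\cF^Y_{\veb})$ is witnessed by a family $(y_F)_{F\in\cF_\alpha\setminus\{\emptyset\}}\ksubset\cF^Y_{\veb}$ with weakly null nodes, and repeated application of the perturbation lemma together with a pruning function $F\mapsto F'$ produces a family $(x_F)\ksubset S_X$ with weakly null nodes and $\norm{x_F\kminus y_{F'}}\kleq 4\ve_{\abs{F}}$, hence $(x_F)\ksubset\cF^X_{5\veb}$ and $\alpha\kle\wi(\cF^X_{5\veb})$. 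So both the analytic core and the index-comparison mechanism of your proposal need to be replaced; the former is the genuinely missing idea.
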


\begin{proof}
  By a theorem of Zippin~\cite{Zippin:88}, $Y$ embeds into a Banach space with
  a shrinking FDD. So without loss of generality we may assume that
  $Y$ itself has a shrinking FDD $E\keq (E_i)$. Let $K\keq K(E,Y)$,
  the projection constant of $E$ in $Y$, and set
  \[
  X_m = X\cap \overline{\textstyle\bigoplus_{j=m+1}^\infty E_j}\qquad
  (m\kin\bn)\ .
  \]
  For $\ve\kin (0,1)$ and for $A\ksubset S_Y$ we define $A^Y_\ve$ by
  \[
  A^Y_\ve = \{ y\kin S_Y:\,\E x\kin A\text{ with } \norm{x\kminus
    y}\kleq\ve\}\ .
  \]
  We shall need the following lemma.
  \begin{lem}
    \label{lem:w-null-perturbation}
    Let $\ve\kin (0,1)$ and let $(y_i)$ be a weakly null sequence in
    $\big(S_X\big)^Y_\ve$. Then there is a weakly null sequence
    $(x_i)$ in $S_X$ and a subsequence $(y'_i)$ of $(y_i)$ such that
    $\norm{x_i\kminus y'_i}\kleq 4\ve$ for all $i\kin\bn$.
  \end{lem}
  \begin{proof}
    Fix $\eta\kge 0$ such that
    \[
    \ve'=(1+\eta)(2\eta+\ve)<1\qquad\text{and}\qquad 2\ve'+\ve<4\ve\ .
    \]
    Let $m\kin\bn$. Since $\big(X/X_m\big)^* \cong X_m^\perp$ is a
    finite-dimensional subspace of $X^*$, there is a finite subset
    $A_m$ of $B_{X^*}$ such that
    \[
    d(x,X_m)\leq (1\kplus\eta) \cdot \max_{f\in A_m} f(x)\qquad
    \text{for all }x\kin X\ .
    \]
    Let $B_m$ be a finite subset of $B_{Y^*}$ containing a Hahn-Banach
    extension to $Y$ of each element of $A_m$. Then choose
    $n(m,\eta)\kin\bn$ such that
    \[
    \norm{g - P^{E^*}_{[1,n(m,\eta)]}(g)} < \eta\qquad \text{for all
    }g\kin B_m\ .
    \]
    Now let $(y'_i)$ be a subsequence of $(y_i)$ such that\
    \[
    \norm{P^E_{[1,n(m,\eta)]}(y'_m)} <\eta\qquad\text{for all
    }m\kin\bn\ .
    \]
    For each  $m\kin\bn$ choose $z_m\kin S_X$ with $\norm{z_m\kminus
      y'_m}\kleq\ve$. We have
    \begin{align*}
      d(z_m,X_m) & \leq (1\kplus\eta)\cdot \max_{g\in B_m} g(z_m)\\
      & \leq (1\kplus\eta)\cdot \Big( \max_{g\in B_m} g(y'_m) + \ve
      \Big)\\
      & < (1\kplus\eta)\cdot \Big( \max_{g\in B_m}
      P^{E^*}_{[1,n(m,\eta)]}(g)(y'_m) + \eta +\ve \Big)\\
      & \leq (1\kplus\eta)\cdot \Big(
      \norm{P^E_{[1,n(m,\eta)]}(y'_m)}+\eta +\ve \Big)
      \leq (1\kplus\eta)\cdot ( 2\eta +\ve )=\ve'\ .
    \end{align*}
    Choose $\xt_m\kin X_m$ such that $\norm{\xt_m\kminus z_m}\kle
    \ve'$, and set $x_m\keq \frac{\xt_m}{\norm{\xt_m}}$. An easy
    computation shows that
    \[
    \norm{x_m - y'_m} < 2\ve'+\ve < 4\ve\qquad\text{for all }m\kin\bn\
    .
    \]
    Since $(E_i)$ is shrinking, it follows that the sequence $(x_i)$
    is weakly null.
  \end{proof}
  We now continue with the proof of Proposition~\ref{prop:fat-index}.
  Let $\veb\keq(\ve_i)\ksubset (0,1)$. It is
  enough to show that if $\alpha\kle\wi(\cF^Y_{\veb})$, then
  $\alpha\kle \wi(\cF^X_{5\veb})$. Now if $\alpha\kle\wi(\cF^Y_{\veb})$
  holds, then by Proposition~\ref{prop:S-index-fine-schreier} there
  is a family $\big(y_F\big) _{F\in\cF_\alpha\setminus\{\emptyset\}}
  \ksubset\cF^Y_{\veb}$ such that for all $F\kin \cF_\alpha\ksetminus
  \MAX(\cF_\alpha)$ the sequence $\big(y_{F\cup\{n\}}\big) _{n>\max
  F}$ is weakly null.

  Given a spreading family $\cF\ksubset\fin{\bn}$ we will call a
  function $F\mapsto F'\colon\cF\to\cF$ \emph{a pruning function }if
  for every $F\keq\{m_1,\dots,m_\ell\}\kin\cF$ we have
  $F'\keq\{m_1',\dots,m_\ell'\}$ is a spread of $F$ and $\{m_1,\dots,
  m_k\}'\keq \{m_1',\dots,m_k'\}$ for each $k\keq 1,\dots,\ell$. 
  Now by repeated applications of Lemma~\ref{lem:w-null-perturbation}
  we can find a family $\big(x_F\big)
  _{F\in\cF_\alpha\setminus\{\emptyset\}} \ksubset S_X$ and a pruning
  function $F\mapsto F'\colon\cF_\alpha\to\cF_\alpha$ such that
  \[
  \big(x_{F\cup\{n\}}\big)_{n>\max F}\quad\text{is weakly null for all
  }F\kin \cF_\alpha\ksetminus\MAX(\cF_\alpha) \,
  \]
  and
  \[
  \norm{x_F - y_{F'}} \leq 4\ve_i\quad \text{for all }i\kin\bn\text{
  and for all }F\kin\cF_{\alpha}\text{ with }\abs{F}\keq i\ .
  \] 
  The last line implies that $\big(x_F\big)
  _{F\in\cF_\alpha\setminus\{\emptyset\}} \ksubset\cF^X_{5\veb}$, and
  hence by Proposition~\ref{prop:S-index-fine-schreier} we have
  $\alpha\kle\wi(\cF^X_{5\veb})$, as required.
\end{proof}

Let $Z$ be a Banach space with an FDD $E\keq (E_i)$, and let $\cF$ be
a block tree of $(E_i)$ in $Z$. Let us write $\Sigma(E,Z)$ for the set
of all finite, normalized block sequences on $(E_i)$ in $Z$. For
$\veb\keq(\ve_i)\ksubset (0,1)$ we let
\[
\cF^{E,Z}_{\veb} = \cF^Z_{\veb} \cap \Sigma(E,Z)\ ,
\]
\ie $\cF^{E,Z}_{\veb}$ is the restriction to block sequences of the
$\veb$-`fattening' of $\cF$ in $Z$:
\[
\cF^{E,Z}_{\veb} = \big\{ (x_i)_{i=1}^n\kin \Sigma(E,Z):\,n\kin\bn,\ \E
(y_i)_{i=1}^n\kin\cF,\ \norm{x_i\kminus y_i}\kleq\ve_i\text{ for
}i\keq 1,\dots,n \big\}\ .
\]
We also define \emph{the compression $\cFt$ of $\cF$ }by
\[
\cFt=\big\{ F\kin\fin{\bn}:\,\E (z_i)_{i=1}^{\abs{F}}\kin\cF,\
F\keq\{\min\supp_E(z_i):\,i\keq 1,\dots,\abs{F} \} \big\}\ .
\]

\begin{rem}
  Having introduced the above notation, we can now write down a sort
  of converse for~\eqref{eq:bli-wi}. If $Z$ is a Banach space with an
  FDD $E\keq (E_i)$ and $\cF$ is a block tree of $(E_i)$ in $Z$, then
  \[
  \wi(\cF) \leq \bli(\cF^{E,Z}_{\veb}) 
  \]
  for all $\veb\keq(\ve_i)\ksubset(0,1)$. Indeed, if
  $\alpha\kle\wi(\cF)$, then by
  Proposition~\ref{prop:S-index-fine-schreier} there is a family
  $\big(x_F\big) _{F\in\cF_\alpha\setminus\{\emptyset\}} \ksubset \cF$
  such that for all $F\kin \cF_\alpha\ksetminus \MAX(\cF_\alpha)$ the
  sequence $\big(x_{F\cup\{n\}}\big) _{n>\max F}$ is weakly null. By
  standard perturbation arguments we get a pruning function $F\mapsto
  F'\colon\cF_\alpha\to\cF_\alpha$ and a family $\big(y_F\big)
  _{F\in\cF_\alpha\setminus\{\emptyset\}}$ in $S_Z$ such that for all
  $F\kin \cF_\alpha\ksetminus \MAX(\cF_\alpha)$ the sequence
  $\big(y_{F\cup\{n\}}\big) _{n>\max F}$ is a block sequence, and for
  all $F\kin\cF_\alpha$ we have $\norm{x_{F'}\kminus y_F}
  \kle\ve_{\abs{F}}$. It follows by
  Proposition~\ref{prop:S-index-fine-schreier} that
  $\alpha\kle\bli(\cF^{E,Z}_{\veb})$.
\end{rem}

\begin{prop}
  \label{prop:compression}
  Let $Z$ be a Banach space with an FDD $E\keq(E_i)$. Let $\cF$ be a
  hereditary block tree of $(E_i)$ in $Z$. Then for all $\veb\keq
  (\ve_i)\ksubset(0,1)$ and for all limit ordinals $\alpha$, if
  $\bli\big( \cF^{E,Z}_{\veb} \big)\kle\alpha$, then $\cbi\big( \cFt
  \big)\kle\alpha$.
\end{prop}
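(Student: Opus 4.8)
The statement is a discretization result: it says that passing from the (coordinate-free) block index of a fattened tree to the Cantor–Bendixson index of the compression loses nothing at limit ordinals. The natural approach is to prove the contrapositive in the form of an \emph{index-preserving embedding of trees}: if $\alpha$ is a limit ordinal and $\cbi(\cFt) > \alpha$, produce a witness showing $\bli(\cF^{E,Z}_{\veb}) > \alpha$. Since $\alpha$ is a limit, $\cbi(\cFt) > \alpha$ gives $\cbi(\cFt) \geq \alpha+1$, hence (by \eqref{eq:cb-index-schreier}, $\cbi(\cF_\alpha) = \alpha+1$) a map of the fine Schreier family $\cF_\alpha$ into $\cFt$; the goal is to lift this to a map of $\cF_\alpha$ into $\cF^{E,Z}_{\veb}$ meeting the hypotheses of Proposition~\ref{prop:S-index-fine-schreier}, so that $\alpha < \bli(\cF^{E,Z}_{\veb})$, contradicting $\bli(\cF^{E,Z}_{\veb}) \leq \alpha$.

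**Key steps.** First I would invoke Proposition~\ref{prop:S-index-fine-schreier} applied with $S$ the set of strictly increasing sequences in $\bn$ (the Cantor–Bendixson case): since $\cbi(\cFt) \geq \alpha+1 > \alpha$, there is a family $(n_F)_{F\in\cF_\alpha\setminus\{\emptyset\}}\subset \cFt$ such that for every $F\in\cF_\alpha\setminus\MAX(\cF_\alpha)$ the sequence $(n_{F\cup\{k\}})_{k>\max F}$ is strictly increasing. Unravelling the definition of $\cFt$, each node of this tree is realized by a finite block sequence in $\cF$ whose supports begin at the prescribed integers $n_F$. The second step is to organize these realizations coherently: proceeding by induction up the tree $\cF_\alpha$ (which is well-founded, being compact hereditary), and using that $\cF$ is \emph{hereditary} together with a diagonalization (the set of normalized infinite block sequences contains diagonals, so we can thin the branching at each node so that the chosen block vectors along incomparable extensions remain genuine block sequences and their min-supports still form an increasing sequence). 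This produces a family $(z_F)_{F\in\cF_\alpha\setminus\{\emptyset\}}\subset S_Z$, with each node a block sequence of $(E_i)$, lying in $\cF$ up to the hereditary property, and with $\min\supp_E(z_F)$ matching the integers coming from the compression. The third step is the perturbation bookkeeping: replace, if necessary, $(z_F)$ by elements of $\cF^{E,Z}_{\veb}$ — but in fact the $z_F$ are already exact elements of $\cF$ restricted to block sequences, so they lie in $\cF^{E,Z}_{\veb}$ trivially (take $y_i = x_i$), and the fattening is only needed to absorb the mismatch between the abstractly given integers and actual min-supports, which a pruning function handles as in the proof of Proposition~\ref{prop:fat-index}. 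Finally, apply Proposition~\ref{prop:S-index-fine-schreier} in the block-index direction (noting that the set $S$ of normalized infinite block sequences contains diagonals) to conclude $\alpha < \bli(\cF^{E,Z}_{\veb})$, i.e.\ $\bli(\cF^{E,Z}_{\veb}) \geq \alpha+1 > \alpha$, the desired contradiction.

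**Main obstacle.** The delicate point is the simultaneous, tree-indexed choice of block realizations in Step~2: for a fixed $F$, the definition of $\cFt$ only guarantees \emph{some} block sequence $(z_i)_{i=1}^{|F|}\in\cF$ with the right min-supports, but as $F$ ranges over all of $\cF_\alpha$ we need these to cohere so that (a) along any chain they extend one another and (b) along the branching directions $(z_{F\cup\{k\}})_{k>\max F}$ the resulting vectors form an honest block sequence in $Z$ (their supports must march off to infinity in $k$). Point (b) is exactly where "$\alpha$ is a limit ordinal" is not what does the work — rather it is the fact that $S$ (infinite block sequences) contains diagonals that lets us pass to a branching subsequence at each node and then diagonalize over the countably many nodes; some care is needed to interleave this diagonalization with the hereditary property of $\cF$ so that truncating a long block vector to the relevant coordinates stays inside $\cF$. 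I expect this coherence/diagonalization argument, together with correctly tracking that $\min\supp_E(z_F)$ is preserved under all the prunings, to be the bulk of the proof; the role of the limit hypothesis is merely to convert $\cbi(\cFt) > \alpha$ into the clean statement $\cbi(\cF_\alpha) = \alpha+1 \leq \cbi(\cFt)$ so that the fine Schreier family $\cF_\alpha$ is available as the indexing skeleton on both sides.
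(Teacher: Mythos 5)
There is a genuine gap, and it sits exactly where you locate your ``main obstacle'': the coherence step (your Step~2) cannot be carried out as described, and the way you dismiss the limit-ordinal hypothesis shows the outline proves too much. Concretely, suppose you have already fixed $z_{F_1},\dots,z_F$ along a branch (with $\min\supp_E(z_{F_i})\keq n_{F_i}$) and now need, for a tail of $k\kge\max F$, vectors $z_{F\cup\{k\}}$ with $(z_{F_1},\dots,z_F,z_{F\cup\{k\}})\kin\cF^{E,Z}_{\veb}$. The compression data give, for each $k$ \emph{separately}, some realization $(y^k_1,\dots,y^k_{|F|},y^k_{|F|+1})\kin\cF$ with the prescribed minima. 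The entries $y^k_i$ with $i\kle |F|$ have supports in the fixed windows $[n_{F_i},n_{F_{i+1}})$ and are normalized, so they range in a compact set and can be stabilized up to $\ve_i$ (this is what the fattening buys); but $y^k_{|F|}$ has support in $[n_F,n_{F\cup\{k\}})$, a window that grows with $k$, so these vectors lie in no compact set and no pigeonhole or net argument produces a single $z_F$ that is $\ve_{|F|}$-close to $y^k_{|F|}$ for infinitely many $k$. Your appeal to ``$S$ contains diagonals'' does not address this: diagonalization merges countably many infinite block sequences into one, it does not pin down finitely many earlier coordinates compatibly with infinitely many extensions. The only general remedy is the hereditary property --- delete the offending entry $y^k_{|F|}$ --- but then the surviving sequence is one shorter, so each block derivative consumes \emph{two} Cantor--Bendixson derivatives. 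This is precisely the $2n$ versus $n$ bookkeeping in the paper's Lemmas~\ref{lem:supp-block-tree} and~\ref{lem:discrete-compression}, and it is why the hypothesis that $\alpha$ is a limit ordinal is essential, not, as you assert, merely a device to turn $\cbi(\cFt)\kge\alpha$ into $\cbi(\cFt)\kgeq\alpha\kplus 1$ (that implication holds for every ordinal). As written, your outline would establish the loss-free inequality $\cbi(\cFt)\kleq\bli\big(\cF^{E,Z}_{\veb}\big)$ for arbitrary ordinals, which the obstruction above blocks and which the paper is careful not to claim.

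A repaired version of your contrapositive would have to tolerate the loss: since the block index is always a successor ordinal (see the Remark after the definition of the $S$-index), for a limit ordinal $\alpha$ it suffices to show $\bli\big(\cF^{E,Z}_{\veb}\big)\kge\beta$ for every $\beta\kle\alpha$, so you may afford a transfer that spends roughly two Cantor--Bendixson derivatives per block derivative --- the extra derivative being used to fix an intermediate minimum that bounds the troublesome support window, exactly the role of the point $m$ in the proof of Lemma~\ref{lem:discrete-compression}, where the sets $A_{j,n}$ can be stabilized only because they are confined to $[m_1,m]$. The paper instead argues in the forward direction: first it replaces vectors by their supports (Lemma~\ref{lem:supp-block-tree}, where the $\veb$-fattening absorbs the compactness perturbations), then it proves the purely combinatorial compression lemma with the factor-two bookkeeping, and only at limit ordinals do the two counts agree. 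Your fine-Schreier skeleton via Proposition~\ref{prop:S-index-fine-schreier} can be made to work, but only after being restructured to accept this loss and to use the limit hypothesis (together with the successor property of the index) in the final step.
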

The proof consists of two parts. We first replace block sequences of
$(E_i)$ with sequences of finite subsets of $\bn$
(Lemma~\ref{lem:supp-block-tree}), and then prove a discrete
compression result (Lemma~\ref{lem:discrete-compression}). Before we
begin we need to extend the notion of block index and related notions
to a discrete setting. We write $\Sigma$ for the set of all finite
successive sequences in $\fin{\bn}\ksetminus\{\emptyset\}$ and $S$
for the set of all infinite successive sequences in
$\fin{\bn}\ksetminus\{\emptyset\}$. A tree $\cG\ksubset \Sigma$ on
$\fin{\bn}$ will be called a block tree in $\fin{\bn}$, and its
$S$-index will be called the block index of $\cG$ denoted by
$\bli(\cG)$. We shall also use the term block derivative and the
notation $\cG'_{\mathrm{bl}},\ \cG\blder{\alpha}$ just like in the
Banach space case.

\begin{lem}
  \label{lem:supp-block-tree}
  Let $Z$ be a Banach space with an FDD $E\keq(E_i)$. Let $\cF$ be a
  block tree of $(E_i)$ in $Z$. Let
  \[
  \supp \cF = \big\{ (A_i)_{i=1}^n\kin\Sigma:\,n\kin\bn,\ \E
  (z_i)_{i=1}^n\kin\cF,\ \supp_E (z_i)\keq A_i\text{ for
  }i\keq 1,\dots,n\big\}\ .
  \]
  Then for all $\veb\keq(\ve_i)\ksubset(0,1)$ we have
  \[
  \bli(\supp \cF)\kleq
  \bli\big(\cF^{E,Z}_{\veb}\big)\ .
  \]
\end{lem}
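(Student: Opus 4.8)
The plan is to prove, by transfinite induction on the ordinal $\beta$, the following statement in which the tolerance is left free: \emph{for every positive sequence $\vec\eta=(\eta_i)\subset(0,1)$, every $n\geq0$, and every $(A_i)_{i=1}^n\in(\supp\cF)\blder{\beta}$, there is a normalized block sequence $(y_i)_{i=1}^n$ of $(E_i)$ in $Z$ with $\supp_E(y_i)\subseteq A_i$ for each $i$ and $(y_1,\dots,y_n)\in(\cF^{E,Z}_{\vec\eta})\blder{\beta}$.} Applied with $\vec\eta=\veb$ this shows $(\cF^{E,Z}_{\veb})\blder{\beta}\neq\emptyset$ whenever $(\supp\cF)\blder{\beta}\neq\emptyset$, for every countable $\beta$, which is exactly $\bli(\supp\cF)\leq\bli(\cF^{E,Z}_{\veb})$. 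Keeping $\vec\eta$ universally quantified is essential: each inductive step consumes half of the tolerance on a perturbation, and we need the other half to survive.

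The engine of the argument is a sub-lemma, also proved by induction on $\beta$: \emph{if $\cH$ is a block tree of $(E_i)$ in $Z$ and $\veb^1=(\ve^1_i)$, $\veb^2=(\ve^2_i)$ lie in $(0,1)$ with $\ve^1_i+\ve^2_i<1$ for all $i$, then}
\[
\big((\cH^{E,Z}_{\veb^1})\blder{\beta}\big)^{E,Z}_{\veb^2}\ \subseteq\ \big(\cH^{E,Z}_{\veb^1+\veb^2}\big)\blder{\beta},
\]
i.e.\ a block-sequence perturbation of size $\veb^2$ of an element that is $\beta$-deep in the $\veb^1$-fattening of $\cH$ is $\beta$-deep in the $(\veb^1+\veb^2)$-fattening. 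The base case is the triangle inequality; at limit ordinals both sides are intersections over smaller $\beta$, so nothing is needed; in the successor step, given $(q_1,\dots,q_\ell)$ obtained by a $\veb^2$-perturbation from an element of $(\cH^{E,Z}_{\veb^1})\blder{\beta+1}$, one takes an infinite successive normalized block sequence witnessing membership of the unperturbed tuple in that derivative, keeps only those of its terms supported entirely above $\max\supp_E(q_\ell)$ (all but finitely many), and applies the inductive hypothesis of the sub-lemma to the $(\ell{+}1)$-long tuples, the last coordinate unperturbed.

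With the sub-lemma in hand the main induction is routine. For $\beta=0$, the definition of $\supp\cF$ gives for $(A_i)\in\supp\cF$ a block sequence $(z_i)\in\cF\subseteq\cF^{E,Z}_{\vec\eta}$ with $\supp_E(z_i)=A_i$. For $\beta+1$: from $(A_i)_{i=1}^n\in(\supp\cF)\blder{\beta+1}$ we get an infinite successive sequence $(B_j)$ in $\fin{\bn}\setminus\{\emptyset\}$ with $(A_1,\dots,A_n,B_j)\in(\supp\cF)\blder{\beta}$ for all $j$, and the inductive hypothesis with tolerance $\vec\eta/2$ applied to each of these $(n{+}1)$-tuples yields normalized block sequences $(y^j_1,\dots,y^j_n,v_j)\in(\cF^{E,Z}_{\vec\eta/2})\blder{\beta}$ with $\supp_E(y^j_i)\subseteq A_i$ and $\supp_E(v_j)\subseteq B_j$. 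Since $\supp_E(y^j_i)\subseteq A_i$, the vector $y^j_i$ lies in the unit sphere of the finite-dimensional space $\bigoplus_{k\in A_i}E_k$, so along a suitable infinite $J\subseteq\bn$ we have $y^j_i\to y_i$ for every $i\leq n$, with $\norm{y_i}=1$, $\supp_E(y_i)\subseteq A_i$, and $(y_1,\dots,y_n)$ a normalized block sequence. For $j\in J$ large, $\norm{y^j_i-y_i}\leq\eta_i/2$ for all $i\leq n$; then $(y_1,\dots,y_n,v_j)$ is a normalized block sequence (its supports lie in the successive sets $A_1,\dots,A_n,B_j$) and is a $\vec\eta/2$-perturbation of $(y^j_1,\dots,y^j_n,v_j)\in(\cF^{E,Z}_{\vec\eta/2})\blder{\beta}$, hence lies in $(\cF^{E,Z}_{\vec\eta})\blder{\beta}$ by the sub-lemma. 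The $v_j$ for large $j\in J$ form an infinite successive normalized block sequence, so they witness $(y_1,\dots,y_n)\in(\cF^{E,Z}_{\vec\eta})\blder{\beta+1}$. The limit case is identical with a sequence $\beta_m\nearrow\lambda$ replacing the $(B_j)$: apply the inductive hypothesis with tolerance $\vec\eta/2$ at each $\beta_m$ to the fixed tuple $(A_i)$, extract a subsequential limit $y_i$ of the resulting vectors by the same finite-dimensionality, and use the sub-lemma once for each $\beta<\lambda$ to place $(y_1,\dots,y_n)$ in $(\cF^{E,Z}_{\vec\eta})\blder{\beta}$; hence $(y_1,\dots,y_n)\in(\cF^{E,Z}_{\vec\eta})\blder{\lambda}$.

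The one real obstacle — and the reason for both the floating tolerance and the sub-lemma — is that the derivatives $(\cF^{E,Z}_{\veb})\blder{\beta}$ are not closed under limits of block sequences, so the compactness furnished by the finite-dimensionality of the $\bigoplus_{k\in A_i}E_k$ produces only a candidate limit vector; returning it to a derivative costs a perturbation, and the sub-lemma is precisely what converts that cost into a harmless additive enlargement of the fattening parameter, while the halving of $\vec\eta$ at each step prevents the cost from accumulating over the transfinite induction.
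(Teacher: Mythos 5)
Your proof is correct and follows essentially the same route as the paper's: a transfinite induction with the tolerance universally quantified (you halve it, the paper doubles it), powered by the same two ingredients --- the fact that a fattening of a derivative sits inside the derivative of an enlarged fattening (your sub-lemma; in the paper the inclusion $(\cF\blder{\alpha})_{\veb}\subseteq(\cF_{\veb})\blder{\alpha}$ together with the triangle inequality) and compactness of the unit spheres of the finite-dimensional spaces $\bigoplus_{k\in A_i}E_k$. The only difference is packaging: the paper proves the inclusion $(\supp\cF)\blder{\alpha}\subseteq\supp\big((\cF^{E,Z}_{\veb})\blder{\alpha}\big)$, keeping the supports exactly equal to the $A_i$, whereas you prove a pointwise lifting with $\supp_E(y_i)\subseteq A_i$, which indeed suffices for the index inequality.
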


\begin{proof}
  To simplify notation we are going to write $\cF_{\veb}$ instead of
  $\cF^{E,Z}_{\veb}$. We show by induction that for all
  $\alpha\kle\omega_1$ we have
  \begin{equation} 
    \label{eq:supp-block-tree;induction}
    \big( \supp \cF \big)\blder{\alpha} \subset \supp \Big( \big(
    \cF_{\veb} \big) \blder{\alpha} \Big)
    \qquad\V \cF\ ,\ \V \veb\ .
  \end{equation}
  Lemma~\ref{lem:supp-block-tree} will follow immediately. We begin
  with the case $\alpha\keq 1$. Let
  $(A_1,\dots,A_n)\kin \big(\supp\cF\big)'_{\mathrm{bl}}$. Then there
  is an infinite successive sequence $(B_i)$ in $\fin{\bn}\ksetminus
  \{\emptyset\}$ such that $(A_1,\dots, A_n,B_i)\kin\supp\cF$ for all
  $i\kin\bn$. Choose $(z_{1,i},\dots,z_{n,i},z_i)\kin \cF$ such that
  $\supp_E (z_{k,i})\keq  A_k$ for $k\keq 1,\dots,n$ and $\supp_E
  (z_i)\keq B_i$. By compactness, for some $i_0\kin\bn$ we have
  $(z_{1,i_0},\dots,z_{n,i_0},z_i)\kin \cF_{\veb}$ for infinitely many
  $i\kin\bn$. It follows that $(z_{1,i_0},\dots,z_{n,i_0})\kin
  \big(\cF_{\veb}\big)'_{\mathrm{bl}}$ and $(A_1,\dots,A_n)\kin \supp
  \Big( \big( \cF_{\veb} \big)'_{\mathrm{bl}} \Big)$, as required.

  In the inductive step we shall use the fact that $\big(
  \cF\blder{\alpha} \big)_{\veb} \ksubset \big( \cF_{\veb} \big)
  \blder{\alpha}$ for all $\cF,\ \veb,\ \alpha$, which can be verified
  by an easy induction. Assume now
  that~\eqref{eq:supp-block-tree;induction} holds. We then have
  \begin{align*}
    \big( \supp \cF \big)\blder{\alpha+1} &= \Big( \big( \supp \cF
    \big)\blder{\alpha} \Big) '_{\mathrm{bl}} \subset \bigg( \supp
    \Big( \big( \cF_{\veb} \big) \blder{\alpha} \Big)
    \bigg) '_{\mathrm{bl}}\\
    \intertext{(by the induction hypothesis)}
    &\subset \supp \Big( \big(\cH_{\veb}\big)
    '_{\mathrm{bl}} \Big)\\
    \intertext{(by the case $\alpha\keq 1$, where $\cH\keq \big(
      \cF_{\veb} \big) \blder{\alpha}$)}
    &\subset \supp \Big( \big( \cF_{2\veb}
    \big)\blder{\alpha+1} \Big)\ .
  \end{align*}
  This proves~\eqref{eq:supp-block-tree;induction} with $\alpha$
  replaced by $\alpha\kplus 1$.

  Finally, let $\lambda$ be a limit ordinal and assume
  that~\eqref{eq:supp-block-tree;induction} holds for all
  $\alpha\kle\lambda$. We have
  \[
  \big( \supp\cF \big)\blder{\lambda} = \bigcap _{\alpha <\lambda}
  \big(\supp \cF\big)\blder{\alpha} \subset \bigcap _{\alpha <\lambda}
  \supp \Big( \big(\cF_{\veb} \big) \blder{\alpha}\Big)
  \subset \supp \Big( \big( \cF_{2\veb}
  \big)\blder{\lambda} \Big)\ .
  \]
  The first inclusion follows from the induction hypothesis. To see
  the second inclusion fix a sequence $(\alpha_i)$ of ordinals with
  $\alpha_i\nearrow\lambda$, and assume that
  \[
  (A_1,\dots,A_n)\in \supp \Big( \big( \cF_{\veb}
  \big)\blder{\alpha_i} \Big) \qquad\text{for all }i\kin\bn\ .
  \]
  For each $i\kin\bn$ choose
  \[
  (z_{1,i},\dots,z_{n,i}) \in \big( \cF_{\veb}\big)
  \blder{\alpha_i}
  \]
  such that $\supp_E (z_{k,i})\keq A_k$ for $k\keq 1,\dots,n$. By
  compactness, we find $i_0\kin\bn$ such that for infinitely many
  $i\kin\bn$ we have
  \[
  \norm{z_{k,i}-z_{k,i_0}} < \ve_k\qquad\text{for }k\keq 1,\dots,n\
  .
  \]
  It follows that $(z_{1,i_0},\dots,z_{n,i_0}) \in \big(
  \cF_{2\veb}\big) \blder{\alpha_i}$ for infinitely many $i\kin\bn$,
  and hence $(z_{1,i_0},\dots,z_{n,i_0}) \in \big(
  \cF_{2\veb}\big) \blder{\lambda}$. In turn this implies that
  $(A_1,\dots,A_n)\kin \supp \big( \big( \cF_{2\veb}
  \big)\blder{\lambda} \big)$, as required.
\end{proof}

\begin{lem}
  \label{lem:discrete-compression}
  Let $\cG\ksubset\Sigma$ be a hereditary block tree in $\fin{\bn}$,
  and let
  \[
  \min\cG = \big\{ F\kin\fin{\bn}:\,\E
  (A_1,\dots,A_{\abs{F}})\kin\cG,\ F\keq\{ \min A_i:\,i\keq
  1,\dots,\abs{F}\} \big\}\ .
  \]
  Then for any limit ordinal $\alpha$, if $\bli(\cG)\kle\alpha$, then
  $\cbi(\min\cG)\kle\alpha$.
\end{lem}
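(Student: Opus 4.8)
The plan is to prove, by transfinite induction, a family of inclusions relating the iterated Cantor--Bendixson derivatives of $\min\cG$ to the iterated block derivatives of $\cG$, and then to read off the Lemma by ordinal arithmetic. The engine is a \emph{one-step compression with a shift}: for every hereditary block tree $\cK\ksubset\fin{\bn}$ and every $F\keq\{f_1\kle\dots\kle f_n\}\kin(\min\cK)\cbder{1}$ one has $F\ksetminus\{f_n\}\kin\min(\cK\blder{1})$. Indeed, for the infinitely many $g\kge f_n$ with $F\cup\{g\}\kin\min\cK$ pick realizing block sequences $(A^g_1,\dots,A^g_n,A^g_{n+1})\kin\cK$ with $\min A^g_i\keq f_i$ for $i\kleq n$ and $\min A^g_{n+1}\keq g$; successivity forces each $A^g_i$ with $i\kle n$ into the fixed interval $[f_i,f_{i+1}]$, so a pigeonhole fixes $A^g_1,\dots,A^g_{n-1}$ to blocks $B_1,\dots,B_{n-1}$ along an infinite set of $g$'s, and deleting the block $A^g_n$ (heredity) leaves $(B_1,\dots,B_{n-1},A^g_{n+1})\kin\cK$ with $\min A^g_{n+1}\keq g\to\infty$, which we thin to a successive sequence to obtain $(B_1,\dots,B_{n-1})\kin\cK\blder{1}$. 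Combined with the trivial fact that, for hereditary $\cH$, the family $\{G\kin\fin{\bn}:G\ksetminus\{\max G\}\kin\cH\}$ has first Cantor--Bendixson derivative contained in $\cH$, two Cantor--Bendixson derivatives of $\min\cK$ land inside $\min(\cK\blder{1})$; iterating, $(\min\cG)\cbder{2k}\ksubset\min(\cG\blder{k})$ for every finite $k$, which already settles the case $\alpha\keq\omega$.

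For the transfinite statement set $\mathrm{ext}(\cH)\keq\{\emptyset\}\cup\{G\kin\fin{\bn}:G\ksetminus\{\max G\}\kin\cH\}$, and for each ordinal $\nu$, writing $\nu\keq\omega\kcdot\delta\kplus r$ with $r\kle\omega$, introduce the bookkeeping data $q(\nu)$ and $p(\nu)\kin\{0,1\}$ recording the block level reached and a ``phase'': $q$ increases by one for every two Cantor--Bendixson levels and satisfies $q(\lambda)\keq\lambda$ at limits $\lambda$, while $p$ alternates with each Cantor--Bendixson derivative, starts at $0$, and equals $1$ at every limit ordinal. The claim to be established by induction on $\nu$, uniformly in $\cG$, is
\[
(\min\cG)\cbder{\nu}\ \ksubset\ \mathrm{ext}^{p(\nu)}\!\big(\min(\cG\blder{q(\nu)})\big),
\]
where $\mathrm{ext}^0(\cH)\keq\cH$ and $\mathrm{ext}^1\keq\mathrm{ext}$. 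The base case is trivial, and the successor step is exactly the two one-step facts of the previous paragraph applied at level $q(\nu)$, the relevant one being selected by the current phase; the limit step is the heart of the matter.

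At a limit $\lambda$ one picks a cofinal sequence $\nu_j\nearrow\lambda$ of phase-$0$ ordinals (one exists for every limit), whence the inductive hypothesis gives $(\min\cG)\cbder{\lambda}\ksubset\bigcap_j\min(\cG\blder{q(\nu_j)})$ with $q(\nu_j)\nearrow\lambda$, so it suffices to prove that for any decreasing sequence $(\cH_j)$ of hereditary block trees,
\[
\textstyle\bigcap_j\min(\cH_j)\ \ksubset\ \mathrm{ext}\!\big(\min(\bigcap_j\cH_j)\big).
\]
Given $F\keq\{f_1\kle\dots\kle f_n\}$ in the left-hand side with realizations $(A^j_1,\dots,A^j_n)\kin\cH_j$ of $F$, the blocks $A^j_1,\dots,A^j_{n-1}$ again lie in the fixed gaps of $F$, so a pigeonhole produces $B_1,\dots,B_{n-1}$ with $(B_1,\dots,B_{n-1},A^j_n)\kin\cH_j$ for $j$ in an infinite set $J$. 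If $\{A^j_n:j\kin J\}$ is finite, a further pigeonhole fixes the last block and $(B_1,\dots,B_{n-1},B_n)\kin\bigcap_j\cH_j$, so $F\kin\min(\bigcap_j\cH_j)\ksubset\mathrm{ext}(\min(\bigcap_j\cH_j))$; otherwise heredity gives $(B_1,\dots,B_{n-1})\kin\cH_j$ for every $j$, so $F\ksetminus\{f_n\}\kin\min(\bigcap_j\cH_j)$, i.e.\ $F\kin\mathrm{ext}(\min(\bigcap_j\cH_j))$. This dichotomy --- whether or not the a priori unbounded last block of a realization can be stabilised --- is exactly where a whole block level is irretrievably lost, and is why the Lemma is restricted to limit $\alpha$; I expect it to be the main obstacle in a full write-up.

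Finally, suppose $\bli(\cG)\kleq\alpha$ with $\alpha$ a limit ordinal. Then $\zeta\keq\bli(\cG)$ is a successor ordinal, so $\cG\blder{\zeta}\keq\emptyset$; writing $\zeta\keq\omega\kcdot\delta_0\kplus r_0$ and taking $\nu\keq\omega\kcdot\delta_0\kplus 2r_0$ (so that $q(\nu)\keq\zeta$), the displayed inclusion gives $(\min\cG)\cbder{\nu}\ksubset\mathrm{ext}^{p(\nu)}(\emptyset)\ksubset\{\emptyset\}$, hence $(\min\cG)\cbder{\nu+1}\keq\emptyset$ and $\cbi(\min\cG)\kleq\nu\kplus1$. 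Since $\zeta\kleq\alpha$, $\alpha$ is a limit, and $\nu\kplus1$ exceeds $\zeta$ only by the finite amount $r_0\kplus1$, we get $\nu\kplus1\kleq\alpha$, so $\cbi(\min\cG)\kleq\alpha$, as required.
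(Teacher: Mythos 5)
Your argument is correct, and its combinatorial engine is the same one the paper uses: the blocks sitting between consecutive elements of $F$ are trapped in bounded intervals and pigeonholed, the single unbounded (last) block is deleted by heredity, and the remaining witnesses are thinned to a successive sequence witnessing a block derivative. What differs is the decomposition. The paper runs on the two-step estimate $(\min\cG)\cbder{2n+2}\subset\min\big(\cG\blder{n+1}\big)$ together with a post-limit statement: after a limit $\alpha$ it spends the $(\alpha\kplus1)$-st Cantor--Bendixson derivative to produce a fixed $m$ bounding all blocks of the realizations, so that the pigeonhole can be performed across the decreasing intersection $\bigcap_{\beta<\alpha}\cG\blder{\beta}$. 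You instead use the one-step shifted estimate ($F\kin(\min\cK)'_{\mathrm{CB}}$ implies $F\ksetminus\{\max F\}\kin\min(\cK'_{\mathrm{bl}})$), record the pending shift with your operator $\mathrm{ext}$, and at limits show that $\min$ commutes with decreasing intersections of hereditary block trees up to one application of $\mathrm{ext}$ --- no fixed $m$ is needed there, because heredity lets you discard the last block outright, and the lost top element is recovered at the next derivative via $(\mathrm{ext}\,\cH)'_{\mathrm{CB}}\subset\cH$. The quantitative outcome is identical (both arguments give, e.g., $(\min\cG)\cbder{\alpha+2n+1}\subset\min\big(\cG\blder{\alpha+n}\big)$ beyond a limit $\alpha$, and hence the restriction to limit $\alpha$ in the statement), so neither route is sharper; yours trades the paper's three-statement induction for a single transfinite induction with a two-valued phase, at the cost of the $q,p$ bookkeeping, and your limit-stage ``dichotomy'' is in fact not needed --- the heredity branch alone already yields $F\kin\mathrm{ext}\big(\min(\bigcap_j\cH_j)\big)$. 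Two small points to record in a full write-up: the blocks $A^g_i$ lie in the fixed interval $[f_i,f_{i+1})$ only for $i\kle n\kminus1$ (for $i\keq n$ the interval depends on $g$, which is exactly why that block is deleted rather than stabilised), and you should note explicitly that block derivatives of hereditary block trees are again hereditary, since fact (A) and the exchange lemma are applied to the trees $\cG\blder{\beta}$.
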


\begin{proof}
  We are going to show the following three statements.
  \begin{mylist}{(iii)}
  \item
    For all $n\kle\omega$ we have $\big(\min\cG\big)\cbder{2n+2}
    \ksubset \min \big( \cG\blder{n+1} \big)$.
  \item
    Let $\alpha$ be a limit ordinal. If
    \[
    \big( \min\cG\big)\cbder{\alpha} \subset \bigcap _{\beta<\alpha}
    \min \big( \cG\blder{\beta}\big)\ ,
    \]
    then for all $n\kle\omega$ we have
    \[
    \big( \min\cG \big) \cbder{\alpha+2n+1} \subset \min \big(
    \cG\blder{\alpha+n} \big)\ .
    \]
  \item
    For every limit ordinal $\alpha\kle\omega_1$ we have
    \begin{equation}
      \label{eq:discrete-block-tree-compression;main}
      \big( \min\cG\big)\cbder{\alpha} \subset \bigcap _{\beta<\alpha}
      \min \big( \cG\blder{\beta}\big)\ .
    \end{equation}
  \end{mylist}
  Since the functions $\cbi(\cdot)$ and $\bli(\cdot)$ only take
  successor ordinals, statement (iii) implies the lemma
  immediately. We start by presenting a proof of~(i) and~(ii) in the
  case $n\keq 0$. The general case in both parts follows by an easy
  induction.

  Let $(m_1,\dots,m_k)$ be an element of $\big( \min\cG
  \big)''_{\mathrm{CB}}$. Then there exist $m\kin\bn$ and an infinite
  subset $N$ of $\bn$ with $m_k\kle m\kle \min N$ such that
  \[
  (m_1,\dots,m_k,m,n)\in \min\cG \qquad \text{for all }n\kin N\ .
  \]
  For each $n\kin N$ choose $\big( A_{1,n},\dots,A_{k,n},B_n,C_n \big)
  \in \cG$ such that
  \[
  \big( \min A_{1,n},\dots,\min A_{k,n},\min B_n,\min C_n \big) =
  (m_1,\dots,m_k,m,n)\ .
  \]
  After passing to a subsequence we can assume that $A_{j,n}\keq A_j$
  for $j\keq 1,\dots,k$ and for all $n\kin N$. Since $\cG$ is
  hereditary, we have $(A_1,\dots,A_k,C_n)\kin\cG$ for all $n\kin N$,
  and hence $(A_1,\dots,A_k)\kin\cG'_{\mathrm{bl}}$. It follows that
  $(m_1,\dots,m_k)\kin\min\big( \cG'_{\mathrm{bl}} \big)$, which
  completes the proof of~(i), $n\keq 0$.

  To show (ii), $n\keq 0$, fix a sequence $(\beta_i)$ of ordinals with
  $\beta_i\nearrow \alpha$. Pick an element $(m_1,\dots,m_k)\kin \big(
  \min\cG \big) \cbder{\alpha+1}$. Then there exist $m\kin\bn$ with
  $m_k\kle m$ such that
  \[
  (m_1,\dots,m_k,m)\in \big( \min\cG \big) \cbder{\alpha} \subset
  \min \big( \cG\blder{\beta_i} \big)\qquad
  \text{for all }i\kin \bn\ .
  \]
  For each $i\kin \bn$ choose $\big( A_{1,i},\dots,A_{k,i},B_i\big)
  \in \cG\blder{\beta_i}$ such that
  \[
  \big( \min A_{1,i},\dots,\min A_{k,i},\min B_i\big) =
  (m_1,\dots,m_k,m)\ .
  \]
  After passing to a subsequence we can assume that $A_{j,i}\keq A_j$
  for $j\keq 1,\dots,k$ and for all $i\kin\bn$. Since
  $(A_1,\dots,A_k)\kin \cG\blder{\beta_i}$ for all $i\kin\bn$, we have
  $(A_1,\dots,A_k)\kin\cG\blder{\alpha}$ and $(m_1,\dots,m_k)\kin\min
  \big( \cG\blder{\alpha} \big)$, as required.

  Finally, we are going to show (iii) by induction on
  $\alpha$. It follows from~(i)
  that~\eqref{eq:discrete-block-tree-compression;main} holds for
  $\alpha\keq \omega$. Moreover,
  if~\eqref{eq:discrete-block-tree-compression;main} holds for a
  limit ordinal $\alpha$, then it also holds for
  $\alpha\kplus\omega$ by~(ii). Finally, assume that $\alpha$ is the
  limit of a strictly increasing sequence $(\alpha_n)$ of non-zero
  limit ordinals and that~(iii) holds with $\alpha$ replaced by
  $\alpha_n$ for all $n\kin\bn$. Then in particular, we have
  \[
  \big( \min\cG \big)\cbder{\alpha_{n+1}} \subset \min \big(
  \cG\blder{\alpha_n} \big)\qquad\text{for all }n\kin\bn\ ,
  \]
  from which~(iii) follows immediately for $\alpha$.
\end{proof}

\section{The Szlenk index}
\label{section:szlenk}

Here we recall the definition and basic properties of the Szlenk
index. We then recall or prove further properties that are relevant
for our purposes.
 
Let $X$ be a separable  Banach space, and let $K$ be
a non-empty, $w^*$-compact subset of $X^*$. For $\ve\kge 0$ set
\[
K'_\ve = \{ x^*\kin K:\,\V \text{$w^*$-neighbourhoods $U$ of }
x^*\text{ we have diam}(K\cap U)\kge\ve\}\ ,
\]
where diam$(K\cap U)$ denotes the norm-diameter of $K\cap U$. We now
define $K^{(\alpha)}_\ve$ for each countable ordinal $\alpha$ by
recursion as follows:
\begin{align*}
  K^{(0)}_\ve &= K\\
  K^{(\alpha+1)}_\ve &= \big( K^{(\alpha)}_\ve
  \big)'_\ve\qquad\text{for all }\alpha\kle\omega_1\\
  K^{(\lambda)}_\ve &= \bigcap _{\alpha <\lambda}
  K^{(\alpha)}_\ve\qquad \text{for a limit ordinal
  }\lambda\kle\omega_1\ .
\end{align*}
Next, we associate to $K$ the following ordinal indices:
\[
\eta(K,\ve) = \sup \{
\alpha\kle\omega_1:\,K^{(\alpha)}_\ve\kneq\emptyset \}\ ,
\]
and
\[
\eta(K) = \sup _{\ve>0} \eta(K,\ve)\ .
\]
Finally, we define \emph{the Szlenk index $\sz(X)$ of }$X$ to be
$\eta(B_{X^*})$, where $B_{X^*}$ is the unit ball of $X^*$.
\begin{rem}
  The original definition in~\cite{Szlenk:68} is slightly different, but it
  gives the same ordinal index.
\end{rem}

Szlenk used his index to show that there is no separable, reflexive
space universal for the class of all separable, reflexive spaces. This
result follows immediately from the following properties of the
function $\sz(\cdot)$.
\begin{thm}[\cite{Szlenk:68}]
  \label{thm:szlenk-properties}
  Let $X$ and $Y$ be separable Banach spaces.
  \begin{mylist}{(iii)}
  \item
    $X^*$ is separable if and only if $\sz(X)\kle\omega_1$.
  \item
    If $X$ isomorphically embeds into $Y$, then $\sz(X)\kleq \sz(Y)$.
  \item
    For all $\alpha\kle\omega_1$ there exists a separable, reflexive
    space with Szlenk index at least $\alpha$.
  \end{mylist}
\end{thm}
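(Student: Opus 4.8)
All three parts are due to Szlenk; the plan is to give short self-contained proofs of (i) and (ii) and to reduce (iii) to the Schreier/Tsirelson material of Section~\ref{section:tsirelson} (or, at this point in the paper, simply to cite~\cite{Szlenk:68}). I will use repeatedly that for a $w^*$-compact $K$ the set $K\ksetminus K'_\ve$ is relatively $w^*$-open in $K$; that $C^{(\alpha)}_\ve\ksubseteq K^{(\alpha)}_\ve$ for every $w^*$-compact $C\ksubseteq K$ and every $\alpha$ (a one-line transfinite induction); that $\eta(K,\ve)$ is non-increasing in $\ve$, while $\eta(K)$ is unchanged under rescaling $K$ (so $\sz(\cdot)$ is a renorming invariant); and that, since $X$ is separable, $(B_{X^*},w^*)$ is compact metrizable, hence second countable and a Baire space.

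For the forward implication of (i), assume $\sz(X)\kle\omega_1$, so for each $\ve$ some countable $\alpha$ has $(B_{X^*})^{(\alpha)}_\ve\keq\emptyset$. I would prove by transfinite induction that whenever a $w^*$-compact $K$ satisfies $K^{(\alpha)}_\ve\keq\emptyset$ it is a countable union of sets of norm-diameter at most $\ve$: for $\alpha\keq 1$ this is a finite covering of $K$ by $w^*$-neighbourhoods of norm-diameter $\kle\ve$ (compactness); for $\alpha\keq\beta\kplus 1$, split $K$ into the finite small union $K^{(\beta)}_\ve$ and the $w^*$-$F_\sigma$ set $K\ksetminus K^{(\beta)}_\ve$, each $w^*$-compact piece $C$ of which has $C^{(\beta)}_\ve\ksubseteq C\cap K^{(\beta)}_\ve\keq\emptyset$; and limit $\alpha$ collapse to an earlier successor since a decreasing chain of nonempty $w^*$-compacts has nonempty intersection. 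Running this with $\ve\keq 1/n$, picking a point in each piece and taking the union over $n$ exhibits a countable norm-dense subset of $B_{X^*}$. For the converse, fix $\ve$ and a countable base $(V_n)$ of $(B_{X^*},w^*)$; mapping an ordinal $\alpha$ at which the $\ve$-derivation strictly drops to an $n$ with $\emptyset\kneq(B_{X^*})^{(\alpha)}_\ve\cap V_n$ of norm-diameter $\kle\ve$ is injective, since a point of $(B_{X^*})^{(\alpha+1)}_\ve$ lying in such a $V_n$ would force $V_n$ to have norm-diameter $\kgeq\ve$ in $(B_{X^*})^{(\alpha)}_\ve$. Hence there are only countably many drops, the $\ve$-derivation stabilizes at a countable stage to a $w^*$-compact $L$ with $L'_\ve\keq L$, and if $L\kneq\emptyset$ then — $X^*$ now being separable — $L$ is a countable union of pieces of norm-diameter $<\ve$, whose $w^*$-closures still have norm-diameter $<\ve$ (the $w^*$-closure does not increase norm-diameter) and hence have empty $w^*$-interior in $L$ (such an interior point would have a $w^*$-neighbourhood of norm-diameter $<\ve$ in $L$, so could not lie in $L'_\ve\keq L$), contradicting that $L$ is Baire. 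Thus $\eta(B_{X^*},\ve)\kle\omega_1$ for each $\ve$, and $\sz(X)\keq\sup_n\eta(B_{X^*},1/n)\kle\omega_1$ since $\omega_1$ has uncountable cofinality.

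For (ii), after renorming I may assume $X\ksubseteq Y$ closed; let $P\colon Y^*\to X^*$ be the restriction, which is $w^*$-$w^*$-continuous, norm non-increasing, and carries $B_{Y^*}$ onto $B_{X^*}$ by Hahn--Banach. The key lemma is $\big(P(L)\big)'_\ve\ksubseteq P\big(L'_{\ve/2}\big)$ for every $w^*$-compact $L\ksubseteq Y^*$: if $x^*$ lies in the left side but not the right, then $Q\keq P^{-1}(x^*)\cap L$ is a nonempty $w^*$-compact missing $L'_{\ve/2}$, so finitely many $w^*$-open sets $U_j$ of norm-diameter $<\ve/2$ in $L$ cover $Q$; separating the $w^*$-compact $P\big(L\ksetminus\bigcup_j U_j\big)$ from $x^*$ by a $w^*$-neighbourhood $V\ni x^*$ forces $P(L)\cap V\ksubseteq\bigcup_j P(L\cap U_j)$, and since each $P(L\cap U_j)$ contains $x^*$ and has norm-diameter $<\ve/2$, the set $P(L)\cap V$ has norm-diameter strictly less than $\ve$ — contradicting $x^*\kin\big(P(L)\big)'_\ve$. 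A transfinite induction (successor step applying the lemma to $L\keq K^{(\alpha)}_{\ve/2}$; limit step using $\bigcap_\alpha P(A_\alpha)\keq P\big(\bigcap_\alpha A_\alpha\big)$ for decreasing $w^*$-compacts) then gives $\big(P(K)\big)^{(\alpha)}_\ve\ksubseteq P\big(K^{(\alpha)}_{\ve/2}\big)$ for all $\alpha$; with $K\keq B_{Y^*}$ this yields $\eta(B_{X^*},\ve)\kleq\eta(B_{Y^*},\ve/2)\kleq\sz(Y)$, and the supremum over $\ve$ gives $\sz(X)\kleq\sz(Y)$.

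Part (iii) is the substantive one. I would take $X$ to be the Tsirelson space $T_\beta\keq T_{\cS_\beta,\frac12}$ of Section~\ref{section:tsirelson} with $\omega^\beta\kgeq\alpha$ (for instance $\beta\keq\alpha$); it is separable and reflexive by Proposition~\ref{prop:tsirelson-properties}, and a direct transfinite computation of the $w^*$-derivation of $B_{T_\beta^*}$, built from the functionals $\sum_{i\in F}e^*_i$ associated with iterated $\cS_\beta$-admissibility, mirrors the Cantor--Bendixson derivation of $\cS_\beta$ (whose index is $\omega^\beta\kplus 1$ by~\eqref{eq:cb-index-schreier}) and yields $\sz(T_\beta)\kgeq\omega^\beta\kgeq\alpha$; alternatively one invokes~\cite{Szlenk:68} or the relation between $\sz$ and the weak index developed later in this section. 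The obstacles I expect are purely of book-keeping: in (ii), preventing the norm-parameter from degrading along the transfinite induction (handled by the single fixed factor $2$, which washes out in the supremum over $\ve$), and in (iii), verifying that the dual-ball $w^*$-derivation actually attains the full Cantor--Bendixson height of $\cS_\beta$ rather than terminating early.
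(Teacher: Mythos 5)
This theorem is quoted from Szlenk's paper; the text you were given contains no proof of it (the statement carries the citation \cite{Szlenk:68} and is used as background), so there is nothing internal to compare against line by line. Judged on its own, your reconstruction is essentially correct and is the standard argument. For (i), both directions work: the transfinite covering argument (empty $\ve$-derived set at a countable stage forces a countable cover by sets of norm-diameter $\kleq\ve$, whence norm-separability of $B_{X^*}$), and the converse via a countable $w^*$-base, the injective ``drop'' map, stabilization, and the Baire-category contradiction with $L'_\ve\keq L$; the small quantitative point in (i) and in the key lemma of (ii) -- that a finite union of sets of diameter $\kle\ve/2$ all containing $x^*$ has diameter strictly less than $\ve$ -- is indeed saved by finiteness, as you implicitly use. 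For (ii), the restriction-map lemma $\big(P(L)\big)'_\ve\ksubseteq P\big(L'_{\ve/2}\big)$ together with the once-only loss of the factor $2$ and the compactness argument at limit stages is exactly the classical route; the only glossed point is the reduction to isometric subspaces, where ``rescaling invariance'' alone is not quite renorming invariance -- you also need that the two dual norms have comparable diameters, which combined with your monotonicity-under-inclusion remark gives the routine estimate $\eta_{\mathrm{new}}(B,\ve)\kleq\eta_{\mathrm{old}}(B,\ve/C')$; this is a cosmetic, not a genuine, gap. Part (iii) is only sketched, but the sketch is sound: taking $T_\beta$ with $\omega^\beta\kgeq\alpha$, the functionals $\tfrac12\sum_{i\in F}e^*_i$, $F\kin\cS_\beta$, survive the $\tfrac12$-derivations of $B_{T_\beta^*}$ along the Cantor--Bendixson derivation of $\cS_\beta$, giving $\sz(T_\beta)\kgeq\omega^\beta$; note that the paper itself later proves the stronger exact statement $\sz(T_\alpha)\keq\omega^{\alpha\cdot\omega}$ (Proposition~\ref{prop:tsirelson-szlenk}) by a different and less elementary route, through the weak index, Proposition~\ref{prop:S-index-fine-schreier} and the Bourgain $\ell_1$-index of \cite{JuO}, and that invoking either that lower bound or Theorem~\ref{thm:szlenk=weak} here creates no circularity, since neither depends on the present theorem. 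So: your (i) and (ii) are complete modulo routine details, your (iii) would need the derivation computation written out (or simply the citation the paper itself uses), and your approach is the direct, self-contained one rather than the paper's policy of quoting \cite{Szlenk:68}.
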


We next restate in one theorem a number of results from~\cite{AJO} in
our terminology. It includes an expression of the Szlenk index in
terms of the weak index of certain trees.

\begin{thm}[\cite{AJO}]
  \label{thm:szlenk=weak}
  Let $X$ be a separable, infinite-dimensional Banach space not
  containing $\ell_1$. For $\rho\kin (0,1)$ let
  \[
  \cF_\rho = \bigg\{ (x_i)_{i=1}^n\kin S_X^{<\omega}:\,n\kin\bn,\
  \Bignorm{\sum_{i=1}^n a_ix_i}\kgeq\rho \sum_{i=1}^n a_i\ \V
  (a_i)_{i=1}^n\ksubset \br^+\bigg\}\ .
  \]
  Then
  \[
  \sz(X)\keq \sup_{\rho>0} \wi(\cF_\rho)\ .
  \]
  Moreover, if $X^*$ is separable, then for some $\alpha\kle\omega_1$,
  we have $\sz(X)\keq\omega^\alpha$ and the above supremum is not
  attained.
\end{thm}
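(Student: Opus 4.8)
The plan is to build a dictionary between the $w^*$-derivation defining $\sz(X)$ and the weak derivation of the trees $\cF_\rho$ of finite $\rho$-$\ell_1^+$-sequences in $S_X$. Two facts drive everything: because $X$ contains no copy of $\ell_1$, Rosenthal's $\ell_1$-theorem lets one replace any bounded sequence by a weakly Cauchy subsequence (and hence produce weakly null sequences by taking differences), and when $X^*$ is separable the weak topology on $B_X$ is metrizable, so the set of weakly null sequences in $S_X$ contains diagonals and Proposition~\ref{prop:S-index-fine-schreier} applies to every $\cF_\rho$. For the inequality $\sz(X)\ge\sup_{\rho>0}\wi(\cF_\rho)$, fix $\rho$ and an ordinal $\beta<\wi(\cF_\rho)$; I want to show $(B_{X^*})^{(\beta)}_{\rho}\neq\emptyset$ (replacing $\rho$ by a fixed fraction of itself would do equally well), which gives $\sz(X)\ge\beta$ and hence $\sz(X)\ge\wi(\cF_\rho)$. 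By Proposition~\ref{prop:S-index-fine-schreier} there is a family $(x_F)_{F\in\cF_\beta\setminus\{\emptyset\}}\subset\cF_\rho$ all of whose nodes are weakly null; for $F=\{m_1,\dots,m_k\}$ the sequence $(x_{\{m_1\}},\dots,x_F)$ is $\rho$-$\ell_1^+$, so its closed convex hull misses the open ball $\{x\in X:\norm{x}<\rho\}$, and separating these convex sets yields a norm-one functional $\phi_F$ with $\phi_F(x_{\{m_1,\dots,m_j\}})\ge\rho$ for $j=1,\dots,k$. Running along the Cantor--Bendixson derivatives of $\cF_\beta$ and taking $w^*$-cluster points of the $\phi_F$ at successive stages (using $w^*$-compactness of $B_{X^*}$), one checks by induction that suitable cluster points survive the corresponding number of $\rho$-Szlenk derivations, producing the desired element of $(B_{X^*})^{(\beta)}_\rho$.

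For the reverse inequality, suppose $(B_{X^*})^{(\alpha)}_\ve\neq\emptyset$ for some $\ve>0$. Unwinding the derivation produces, at each level, pairs of functionals lying in successive $\ve$-derivatives that are $w^*$-close yet at norm-distance $\ge\ve$; their differences form $w^*$-null sequences of norm $\ge\ve$, and for each such difference $g$ one picks $x\in S_X$ with $g(x)\ge\ve/2$, chosen to be nearly annihilated by any prescribed finite set of previously fixed functionals. The resulting sequences of unit vectors need not be weakly null, but --- and this is where the absence of $\ell_1$ is indispensable --- Rosenthal's theorem together with a diagonalisation lets one pass to weakly Cauchy, and then (by subtracting weak limits, or passing to successive differences) to weakly null, subsequences; the nesting of the derivations forces the functionals to endow every branch of the resulting vectors with a uniform lower $\ell_1^+$-estimate of parameter comparable to $\ve$. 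By Proposition~\ref{prop:S-index-fine-schreier} this is a weakly null $\rho$-$\ell_1^+$-tree of weak index $\ge\alpha$ with $\rho$ comparable to $\ve$, so $\alpha\le\wi(\cF_\rho)\le\sup_{\rho'>0}\wi(\cF_{\rho'})$; letting $\ve\to0$ completes the equality $\sz(X)=\sup_{\rho>0}\wi(\cF_\rho)$.

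For the ``moreover'' part, assume $X^*$ separable, so each $\wi(\cF_\rho)$ is a countable ordinal and so is $\sz(X)$. The crux is a submultiplicativity estimate: there is an absolute constant $c\in(0,1)$ with $\wi(\cF_{c\rho_1\rho_2})\ge\wi(\cF_{\rho_1})\cdot\wi(\cF_{\rho_2})$ (ordinal product) for all $\rho_1,\rho_2\in(0,1)$. I would prove it by substituting a weakly null $\rho_1$-$\ell_1^+$-tree of large weak index into the nodes of a weakly null $\rho_2$-$\ell_1^+$-tree: roughly, one replaces a single node vector by a whole block assembled from normalised averages taken from the inner tree --- normalised averages of a $\rho$-$\ell_1^+$-sequence are again $\rho$-$\ell_1^+$ --- and uses a gliding-hump argument so that the concatenated branches retain a $c\rho_1\rho_2$-$\ell_1^+$-estimate, thereby realising the ordinal product as a weak index. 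Granting this, if $\gamma<\sz(X)$ then $\gamma<\wi(\cF_\rho)$ for some $\rho$, whence $\gamma\cdot n\le\wi(\cF_\rho)^{\,n}\le\wi(\cF_{c^{\,n-1}\rho^{\,n}})\le\sz(X)$ for every $n$, so $\gamma\cdot\omega\le\sz(X)$; a short Cantor-normal-form argument then shows $\sz(X)$ is additively indecomposable, i.e. $\sz(X)=\omega^\alpha$ for some $\alpha\le\omega_1$. Finally, were the supremum attained, say $\sz(X)=\wi(\cF_\rho)=\omega^\alpha$, then $\wi(\cF_{c\rho^2})\ge\wi(\cF_\rho)^2=(\omega^\alpha)^2=\omega^{\alpha\cdot2}>\omega^\alpha=\sz(X)$, contradicting $\wi(\cF_{c\rho^2})\le\sz(X)$; hence the supremum is not attained.

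I expect the two genuinely hard points to be: in the reverse inequality, manufacturing a \emph{weakly null} tree of unit vectors out of $w^*$-null functional differences while preserving a uniform $\ell_1^+$-estimate along branches --- the step that forces the $\ell_1$-freeness hypothesis; and, in the ``moreover'', the substitution lemma, where one must control the cancellation between the already-built and newly-appended parts of a branch so that the product parameter $c\rho_1\rho_2$ really survives, which is precisely where the functionals supplied by the nested derivations are exploited.
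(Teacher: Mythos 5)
A preliminary remark: the paper itself contains no proof of Theorem~\ref{thm:szlenk=weak} --- it is quoted from Alspach--Judd--Odell~\cite{AJO} --- so what you are attempting is a reconstruction of that external theorem rather than of an argument in this paper. Your first inequality is fine in outline: separating each $\rho$-$\ell_1^+$ branch from the open $\rho$-ball gives the functionals $\phi_F$, and passing to $w^*$-cluster points down the well-founded tree works because each derived set is $w^*$-compact and the diameter estimate comes from $\psi_{F\cup\{n\}}(x_{F\cup\{n\}})\kgeq\rho$ together with $\psi_F(x_{F\cup\{n\}})\to 0$ by weak nullity of the node. The genuine gap is in the reverse inequality. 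You propose to pick, for each $w^*$-null difference $g$, a vector $x\kin S_X$ with $g(x)\kgeq\ve/2$ ``chosen to be nearly annihilated by any prescribed finite set of previously fixed functionals''. Such a choice is in general impossible: the vectors that nearly norm $g$ form a set on which the earlier functionals are completely uncontrolled, and the uncontrolled term (an earlier functional applied to the newly chosen vector) is exactly what destroys the branch estimate in the naive construction. In the correct argument the near-annihilation is not chosen but produced afterwards: one applies Rosenthal's theorem to the sequence of norming vectors, extracts a weakly Cauchy subsequence, and replaces the vectors by normalized successive differences; differences of a weakly Cauchy sequence are eventually small on any fixed finite set of functionals, and they stay of norm $\gtrsim\ve$ because the $w^*$-null functionals separate consecutive terms. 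Even then one must exhibit a single functional bounded below on all vectors of a given branch simultaneously (this is where nesting the $w^*$-neighbourhoods against the finitely many vectors already chosen enters, at the cost of a constant), and one must check that passing to subsequences and differences does not lower the ordinal index of the resulting tree. Your sketch asserts the outcome of all these steps but supplies no mechanism; this is the heart of the theorem, not a detail.

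The ``moreover'' rests on a second unproved lemma, and the proposed proof of it is doubtful as stated: in a coordinate-free space there is no way to ``substitute'' one weakly null $\ell_1^+$-tree into a node of another --- the inner tree's vectors bear no relation to the functionals witnessing the outer tree's branch estimates, and a gliding-hump argument has no meaning without an FDD --- so obtaining concatenated branches that are uniformly $c\rho_1\rho_2$-$\ell_1^+$ is essentially as hard as the theorem itself. The standard route (Lancien's argument, and the one behind~\cite{AJO}) proves a submultiplicativity statement for the Szlenk derivation itself, exploiting the convexity and symmetry of $B_{X^*}$, and deduces $\sz(X)\keq\omega^\alpha$ from that; no tree substitution is needed. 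Note also that once the equality and $\sz(X)\keq\omega^\alpha$ are known, non-attainment is immediate and needs none of your submultiplicativity argument: by the Remark in Section~\ref{section:ordinal} every $S$-index, in particular a countable $\wi(\cF_\rho)$, is a successor ordinal, whereas $\omega^\alpha$ is a limit ordinal since $\sz(X)\kgeq\omega$ for infinite-dimensional $X$.
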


We next consider the Szlenk index of sums of Banach spaces. For finite
sums (Proposition~\ref{prop:szlenk-sum}) this can be computed directly
from the definition using a kind of ``Leibniz'' rule for higher order
derivatives of products of $w^*$-compact sets
(see Lemma~\ref{lem:szlenk-leibnitz}, part~(iii)). For infinite sums
(Proposition~\ref{prop:szlenk-ell2-sum}) we use the weak index as well
as the result on finite sums to obtain an upper bound. In what comes
we will denote by $\alpha\boxplus\beta$ the ``pointwise sum'' of
ordinals $\alpha$ and $\beta$, which is defined as follows. We first
write $\alpha$ and $\beta$ in Cantor Normal Form as
\[
\alpha=\omega^{\gamma_1}\kcdot m_1 +\omega^{\gamma_2}\kcdot m_2 +
\dots + \omega^{\gamma_k}\kcdot m_k\quad\text{and}\quad
\beta=\omega^{\gamma_1}\kcdot n_1 +\omega^{\gamma_2}\kcdot n_2 + \dots
+ \omega^{\gamma_k}\kcdot n_k\ ,
\]
where $k\kin\bn$, $\gamma_1\kge\gamma_2\kge\dots\kge\gamma_k\kgeq 0$
are ordinals, and $m_i, n_i\kle\omega$ for all~$i$. Then we set
\[
\alpha\boxplus\beta = \omega^{\gamma_1}\kcdot (m_1\kplus n_1)
+\omega^{\gamma_2}\kcdot (m_2\kplus n_2) + \dots +
\omega^{\gamma_k}\kcdot (m_k\kplus n_k)\ .
\]
\begin{lem}
  \label{lem:szlenk-leibnitz}
  Let $X$ and $Y$ be separable Banach spaces. Let $m\kin\bn$, let
  $K,K_1,\dots,K_m$ be non-empty, $w^*$-compact subsets of $X^*$,
  and let $L$ be a non-empty, $w^*$-compact subset of $Y^*$. Let
  $\ve\kge 0$.
  \begin{mylist}{(iii)}
  \item
    $\ds \Big( \bigcup_{j=1}^m K_j\Big) '_\ve \subset \bigcup_{j=1}^m
    \big(K_j\big)'_{\ve/2}\ .$
  \item
    For the subset $K\ktimes L$ of $X^*\oplus_\infty Y^*\keq
    (X\oplus_1 Y)^*$ we have
    \[
    (K\ktimes L)'_\ve = K'_\ve\ktimes L \cup K\ktimes L'_\ve\ .
    \]
  \item
    For the subset $K\ktimes L$ of $X^*\oplus_\infty Y^*$, and for any
    ordinal $\alpha\kle\omega_1$ we have
    \[
    (K\ktimes L)^{(\alpha)}_\ve \subset\bigcup
    _{\beta\boxplus\gamma=\alpha}K^{(\beta)}_{\ve/2}\ktimes
    L^{(\gamma)}_{\ve/2}\ .
    \]
  \end{mylist}
\end{lem}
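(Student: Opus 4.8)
The plan is to prove parts~(i) and~(ii) directly from the definition of the $\ve$-derivative, and then to obtain~(iii) by a transfinite induction on $\alpha$ that uses~(i) and~(ii). For~(i): suppose $x^*\kin\big(\bigcup_{j=1}^m K_j\big)'_\ve$ but $x^*\knotin (K_j)'_{\ve/2}$ for every $j$. For each $j$ with $x^*\kin K_j$ pick a $w^*$-open $U_j\ni x^*$ with $\mathrm{diam}(K_j\cap U_j)\kle\ve/2$, and for each $j$ with $x^*\knotin K_j$ use $w^*$-compactness of $K_j$ to pick a $w^*$-open $U_j\ni x^*$ disjoint from~$K_j$. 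On the $w^*$-neighbourhood $U\keq\bigcap_j U_j$ of $x^*$ the set $\bigcup_j K_j$ meets only those $K_j$ that contain $x^*$, each inside a set of diameter $\kle\ve/2$ which also contains $x^*$; writing $\delta$ for the (finite) maximum of those diameters we have $\delta\kle\ve/2$, hence $\mathrm{diam}\big((\bigcup_j K_j)\cap U\big)\kleq 2\delta\kle\ve$, contradicting $x^*\kin\big(\bigcup_j K_j\big)'_\ve$.

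For~(ii): the norm on $X^*\oplus_\infty Y^*$ is the sup-norm, so $\mathrm{diam}(A\ktimes B)\keq\max\big(\mathrm{diam}(A),\mathrm{diam}(B)\big)$ for nonempty $A,B$, and its $w^*$-topology is the product of the two $w^*$-topologies, so $(x^*,y^*)$ has a neighbourhood base of sets $U\ktimes V$ with $U\ni x^*$, $V\ni y^*$ $w^*$-open. Then ``$\supseteq$'' is immediate: if $x^*\kin K'_\ve$ and $y^*\kin L$, every such $U\ktimes V$ gives $\mathrm{diam}\big((K\ktimes L)\cap(U\ktimes V)\big)\keq\max\big(\mathrm{diam}(K\cap U),\mathrm{diam}(L\cap V)\big)\kgeq\ve$, and symmetrically for $K\ktimes L'_\ve$. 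For ``$\subseteq$'': if $(x^*,y^*)\kin(K\ktimes L)'_\ve$ lies in neither $K'_\ve\ktimes L$ nor $K\ktimes L'_\ve$, then, as $x^*\kin K$ and $y^*\kin L$, there are $w^*$-open $U\ni x^*$, $V\ni y^*$ with $\mathrm{diam}(K\cap U)\kle\ve$ and $\mathrm{diam}(L\cap V)\kle\ve$, so $\mathrm{diam}\big((K\ktimes L)\cap(U\ktimes V)\big)\kle\ve$, a contradiction. (Both (i) and (ii) hold, trivially, when some sets are empty; since a $w^*$-derivative of a $w^*$-compact set is again $w^*$-compact and possibly empty, this is all that~(iii) will need.)

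For~(iii): transfinite induction on $\alpha$. The case $\alpha\keq0$ is the identity $K\ktimes L\keq K^{(0)}_{\ve/2}\ktimes L^{(0)}_{\ve/2}$. For $\alpha\kplus1$, assuming the claim at $\alpha$ and using, in turn, monotonicity of $(\cdot)'_\ve$ under inclusion (with the inductive hypothesis), part~(i) with parameter $\ve$ (applied to the \emph{finite} union, which is legitimate because $\alpha$ has only finitely many natural-sum decompositions), and part~(ii) with parameter $\ve/2$ applied to each term, one gets
\[
(K\ktimes L)^{(\alpha+1)}_\ve\ksubseteq\bigcup_{\beta\boxplus\gamma=\alpha}\Big(K^{(\beta+1)}_{\ve/2}\ktimes L^{(\gamma)}_{\ve/2}\cup K^{(\beta)}_{\ve/2}\ktimes L^{(\gamma+1)}_{\ve/2}\Big)\ .
\]
Since $\kplus1$ only increments the $\omega^0$-coefficient of a Cantor normal form, $\beta\boxplus\gamma\keq\alpha$ forces $(\beta\kplus1)\boxplus\gamma\keq\beta\boxplus(\gamma\kplus1)\keq\alpha\kplus1$, so the right-hand side sits inside $\bigcup_{\beta'\boxplus\gamma'=\alpha+1}K^{(\beta')}_{\ve/2}\ktimes L^{(\gamma')}_{\ve/2}$, which is the claim at $\alpha\kplus1$. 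For a limit ordinal $\lambda$, take $(x^*,y^*)\kin(K\ktimes L)^{(\lambda)}_\ve\keq\bigcap_{\mu<\lambda}(K\ktimes L)^{(\mu)}_\ve$, and let $\beta^*$ (resp.\ $\gamma^*$) be the largest ordinal with $x^*\kin K^{(\beta^*)}_{\ve/2}$ (resp.\ $y^*\kin L^{(\gamma^*)}_{\ve/2}$); such a maximum exists because the set of such ordinals is a nonempty initial segment of $[0,\omega_1]$ closed under suprema. By the inductive hypothesis each $\mu\kle\lambda$ yields $\beta_\mu\boxplus\gamma_\mu\keq\mu$ with $x^*\kin K^{(\beta_\mu)}_{\ve/2}$, $y^*\kin L^{(\gamma_\mu)}_{\ve/2}$; thus $\beta_\mu\kleq\beta^*$, $\gamma_\mu\kleq\gamma^*$, and by monotonicity of the natural sum $\mu\keq\beta_\mu\boxplus\gamma_\mu\kleq\beta^*\boxplus\gamma^*$, so $\lambda\kleq\beta^*\boxplus\gamma^*$. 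Then I would write $\lambda\keq\beta'\boxplus\gamma'$ with $\beta'\kleq\beta^*$, $\gamma'\kleq\gamma^*$; since the derivatives are nested, $x^*\kin K^{(\beta^*)}_{\ve/2}\ksubseteq K^{(\beta')}_{\ve/2}$ and $y^*\kin L^{(\gamma^*)}_{\ve/2}\ksubseteq L^{(\gamma')}_{\ve/2}$, so $(x^*,y^*)\kin K^{(\beta')}_{\ve/2}\ktimes L^{(\gamma')}_{\ve/2}$, as required.

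I expect the only real obstacle to be the ordinal-arithmetic fact invoked in the limit step, namely that for all ordinals $\sigma,\tau$ one has $\{\beta'\boxplus\gamma':\beta'\kleq\sigma,\ \gamma'\kleq\tau\}\keq\{\mu:\mu\kleq\sigma\boxplus\tau\}$, so that every $\lambda\kleq\sigma\boxplus\tau$ splits as $\beta'\boxplus\gamma'$ with $\beta'\kleq\sigma$ and $\gamma'\kleq\tau$. The inclusion ``$\ksubseteq$'' is just monotonicity of $\boxplus$; the reverse inclusion, which is the one used, is mildly delicate because the natural sum, though strictly increasing in each variable, is not continuous (e.g.\ $\sup_n(n\boxplus1)\keq\omega\kneq\omega\kplus1\keq\omega\boxplus1$), so one cannot simply pass to the suprema of the $\beta_\mu$ and $\gamma_\mu$. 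I would prove it by induction on the Cantor normal form of $\sigma\boxplus\tau$, peeling off the top exponent and distributing its coefficient between $\sigma$ and $\tau$; equivalently one identifies an ordinal with the finite multiset of exponents occurring in its normal form, under which $\boxplus$ is multiset union and the statement becomes transparent. The remaining ingredients --- monotonicity of $(\cdot)'_\ve$ under inclusion of $w^*$-compact sets, the effect of $\kplus1$ on normal forms, and closure of the $w^*$-compact sets under derivation --- are routine.
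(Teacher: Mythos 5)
Your proposal is correct, and parts (i), (ii) and the successor step of (iii) are essentially the paper's argument (for (i) the paper passes to a sequence using $w^*$-metrizability of bounded sets, while you intersect finitely many neighbourhoods directly; both work, and your remarks about empty sets and about the finiteness of the set of decompositions $\beta\boxplus\gamma=\alpha$ are exactly the points that need making). The genuine divergence is the limit step of (iii). The paper never needs your splitting fact: it writes $\lambda$ in Cantor Normal Form, chooses a cofinal sequence $\alpha_n\nearrow\lambda$ of the special shape $\omega^{\lambda_1}m_1+\dots+\omega^{\lambda_k}(m_k\kminus 1)+\omega^{\delta_n}\kcdot n$, applies the inductive hypothesis along it, and then uses pigeonhole on the finitely many coefficient patterns of the resulting $\beta_n,\gamma_n$ to extract a subsequence on which they stabilize, producing $\beta$ with $K^{(\beta)}_{\ve/2}=\bigcap_n K^{(\beta_n)}_{\ve/2}$ and $\gamma\kleq\gamma_n$ with $\beta\boxplus\gamma\keq\lambda$. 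You instead take the maximal $\beta^*,\gamma^*$ with $x^*\kin K^{(\beta^*)}_{\ve/2}$, $y^*\kin L^{(\gamma^*)}_{\ve/2}$ (legitimate: those sets of ordinals are initial segments closed under suprema, since the derived sets decrease and limit derivatives are intersections), deduce $\lambda\kleq\beta^*\boxplus\gamma^*$, and invoke the fact that every $\lambda\kleq\sigma\boxplus\tau$ splits as $\beta'\boxplus\gamma'$ with $\beta'\kleq\sigma$, $\gamma'\kleq\tau$. That fact is true, and you correctly flag it as the one delicate point (the natural sum is not continuous); your CNF-peeling induction does prove it (if the leading exponent or coefficient of $\lambda$ drops below that of $\sigma\boxplus\tau$, put the entire tail of $\lambda$ on a side that still has room in its leading coefficient; if the leading terms agree, recurse on the tails), but it should be written out, since the parenthetical multiset heuristic is not by itself a proof --- the ordering of ordinals is not the submultiset ordering, so comparability does not reduce to multiset inclusion. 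In short, your route buys a conceptually cleaner limit step at the price of an auxiliary (standard but not completely trivial) lemma on $\boxplus$, whereas the paper's pigeonhole argument is longer but self-contained. One cosmetic point: take the maxima $\beta^*,\gamma^*$ over $[0,\lambda]$ rather than $[0,\omega_1]$, so that no derivative of uncountable order is ever invoked; if either maximum equals $\lambda$ the splitting is trivial anyway.
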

\begin{proof}
  (i)~Let $x^*\kin\big( \bigcup_{j=1}^m K_j\big) '_\ve$. Since the
  $w^*$-topology on a $w^*$-compact subset of $X^*$ is metrizable, it
  follows easily from the definition that there is a sequence
  $(x^*_n)$ in $\bigcup_{j=1}^m K_j$ such that
  $x^*_n\stackrel{w^*}{\to} x^*$ as $n\to\infty$, and
  $\norm{x^*_n\kminus x^*}\kge\ve/2$ for all $n\kin\bn$. After passing
  to a subsequence we may assume that for some $1\kleq j\kleq m$ we
  have $x^*_n\kin K_j$, and hence $x^*\kin \big(K_j\big)'_{\ve/2}$.

  \noindent
  (ii)~This is immediate from the definition and from the fact that we
  are working with the $\ell_\infty$-sum of $X^*$ and $Y^*$.

  \noindent
  (iii)~We prove this statement by induction. The case $\alpha\keq 0$
  is clear. Using parts~(i) and~(ii) and assuming the statement for
  some $\alpha$, we have
  \begin{align*}
    (K\ktimes L)^{(\alpha+1)}_\ve &\subset\bigcup
    _{\beta\boxplus\gamma=\alpha} \Big( K^{(\beta)}_{\ve/2}\ktimes
    L^{(\gamma)}_{\ve/2} \Big)'_{\ve/2}\\
    & = \bigcup _{\beta\boxplus\gamma=\alpha} \Big(
    K^{(\beta+1)}_{\ve/2}\ktimes L^{(\gamma)}_{\ve/2} \cup
    K^{(\beta)}_{\ve/2}\ktimes L^{(\gamma+1)}_{\ve/2} \Big)\\
    &= \bigcup _{\beta\boxplus\gamma=\alpha+1} K^{(\beta)}_{\ve/2}
    \ktimes L^{(\gamma)}_{\ve/2}\ .
  \end{align*}
  Finally, if $\lambda$ is a limit ordinal, then by the
  induction hypothesis we have
  \[
  (K\ktimes L)^{(\lambda)}_\ve \subset\bigcap _{\alpha<\lambda} \Big(
  \bigcup _{\beta\boxplus\gamma=\alpha}K^{(\beta)}_{\ve/2}\ktimes
  L^{(\gamma)}_{\ve/2} \Big)\ .
  \]
  Write $\lambda$ in Cantor Normal Form:
  \[
  \lambda = \omega^{\lambda_1}\kcdot m_1 + \dots +
  \omega^{\lambda_k}\kcdot m_k
  \]
  with $m_k\kge 0$, and for all $n\kin\bn$ set
  \[
  \alpha_n = \omega^{\lambda_1}\kcdot m_1 + \dots +
  \omega^{\lambda_k}\kcdot (m_k\kminus 1) + \omega^{\delta_n}\kcdot
  n\ ,
  \]
  where $\lambda_k\keq\delta_n\kplus1$ for all $n\kin\bn$ if
  $\lambda_k$ is a successor ordinal, and
  $\delta_n\nearrow\lambda_k$ if $\lambda_k$ is a limit ordinal. 

  Now let $x^*\keq(y^*,z^*)\kin (K\ktimes L)^{(\lambda)}_\ve$. For
  each $n\kin\bn$ we have $\alpha_n\kle\lambda$, so there exist
  ordinals $\beta_n$ and $\gamma_n$ with $\beta_n\boxplus \gamma_n\keq
  \alpha_n$ and $x^*\kin K^{(\beta_n)}_{\ve/2}\ktimes
  L^{(\gamma_n)}_{\ve/2}$. Now there exist
  $n_1,\dots,n_k,p_1,\dots,p_k\kle\omega$ and an infinite subset $N$
  of $\bn$ such that
  \begin{align*}
    \beta_n &= \omega^{\lambda_1}\kcdot n_1 + \dots +
  \omega^{\lambda_k}\kcdot n_k + \omega^{\delta_n}\kcdot u_n\\
    \gamma_n &= \omega^{\lambda_1}\kcdot p_1 + \dots +
  \omega^{\lambda_k}\kcdot p_k + \omega^{\delta_n}\kcdot v_n\ ,
  \end{align*}
  where $u_n\kplus v_n\keq n$ for all $n\kin N$. Assume that $\sup_n
  u_n\keq \omega$ (the case $\sup_n v_n\keq\omega$ being similar). Set
  \begin{align*}
     \beta &= \omega^{\lambda_1}\kcdot n_1 + \dots +
  \omega^{\lambda_k}\kcdot (n_k\kplus 1)\\
    \gamma &= \omega^{\lambda_1}\kcdot p_1 + \dots +
  \omega^{\lambda_k}\kcdot p_k\ .
  \end{align*}
  Then $\beta\keq\sup_n \beta_n$, whereas $\gamma\kleq\gamma_n$ for
  all $n\kin N$. It follows that $y^*\kin K^{(\beta)}_{\ve/2}\keq
  \bigcap _n K^{(\beta_n)}_{\ve/2}$, and $z^*\kin
  L^{(\gamma)}_{\ve/2}$. Since
  $\beta\boxplus\gamma\keq\lambda$, statement~(iii) with $\lambda$
  replacing $\alpha$ follows.
\end{proof}

\begin{prop}
  \label{prop:szlenk-sum}
  Let $X$ and $Y$ be separable Banach spaces. Then
  \[
  \sz\big( X\oplus_1 Y\big) = \max\{ \sz(X),\sz(Y)\}\ .
  \]
\end{prop}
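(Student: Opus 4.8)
The plan is to prove the two inequalities $\max\{\sz(X),\sz(Y)\}\kleq\sz(X\oplus_1 Y)$ and $\sz(X\oplus_1 Y)\kleq\max\{\sz(X),\sz(Y)\}$ separately. The first is immediate: both $X$ and $Y$ embed isometrically into $X\oplus_1 Y$, so Theorem~\ref{thm:szlenk-properties}(ii) gives $\sz(X),\sz(Y)\kleq\sz(X\oplus_1 Y)$. For the second I would begin by recording that $(X\oplus_1 Y)^*\keq X^*\oplus_\infty Y^*$, with unit ball $B_{X^*}\ktimes B_{Y^*}$ and $w^*$-topology equal to the product of the two $w^*$-topologies; in particular $\sz(X\oplus_1 Y)\keq\eta(B_{X^*}\ktimes B_{Y^*})$. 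If $X^*$ or $Y^*$ is non-separable, then so is $(X\oplus_1 Y)^*$ and all three Szlenk indices equal $\omega_1$ by Theorem~\ref{thm:szlenk-properties}(i), and there is nothing to prove.

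So I would then assume $X^*$ and $Y^*$ separable, and also that $X$ and $Y$ are infinite-dimensional (if one of them is finite-dimensional the argument below applies with the corresponding Szlenk index equal to $0$). Since a space with separable dual contains no copy of $\ell_1$, Theorem~\ref{thm:szlenk=weak} applies to both $X$ and $Y$, so $\sz(X)\keq\omega^{\xi_1}$ and $\sz(Y)\keq\omega^{\xi_2}$ for some countable ordinals, which we may relabel so that $\xi_1\kgeq\xi_2$. The crucial extra input is that the suprema defining $\eta(B_{X^*})$ and $\eta(B_{Y^*})$ are \emph{not} attained: thus $\eta(B_{X^*},\delta)\kle\omega^{\xi_1}$ and $\eta(B_{Y^*},\delta)\kle\omega^{\xi_2}\kleq\omega^{\xi_1}$ for every $\delta\kge 0$.

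Now I would run Lemma~\ref{lem:szlenk-leibnitz}(iii) with $K\keq B_{X^*}$ and $L\keq B_{Y^*}$: for every $\ve\kge 0$ and every ordinal $\alpha$,
\[
\big(B_{X^*}\ktimes B_{Y^*}\big)^{(\alpha)}_\ve \subset \bigcup_{\beta\boxplus\gamma=\alpha}\big(B_{X^*}\big)^{(\beta)}_{\ve/2}\ktimes\big(B_{Y^*}\big)^{(\gamma)}_{\ve/2}\ .
\]
Fix $\ve\kge 0$ and suppose $\alpha$ is such that the left-hand side is non-empty. Then there are $\beta,\gamma$ with $\beta\boxplus\gamma\keq\alpha$ and $\big(B_{X^*}\big)^{(\beta)}_{\ve/2}\kneq\emptyset$, $\big(B_{Y^*}\big)^{(\gamma)}_{\ve/2}\kneq\emptyset$; by the definition of $\eta(\cdot,\cdot)$ this forces $\beta\kleq\eta(B_{X^*},\ve/2)$ and $\gamma\kleq\eta(B_{Y^*},\ve/2)$, whence $\beta,\gamma\kle\omega^{\xi_1}$ by the preceding paragraph. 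But the natural sum of two ordinals each strictly below $\omega^{\xi_1}$ is again strictly below $\omega^{\xi_1}$ — the leading exponent in the Cantor normal form of $\beta\boxplus\gamma$ is the larger of those of $\beta$ and $\gamma$, hence $\kle\xi_1$ — so $\alpha\keq\beta\boxplus\gamma\kle\omega^{\xi_1}$; in fact $\alpha\kleq\eta(B_{X^*},\ve/2)\boxplus\eta(B_{Y^*},\ve/2)$, a single fixed ordinal lying strictly below $\omega^{\xi_1}$. Hence $\eta(B_{X^*}\ktimes B_{Y^*},\ve)\kle\omega^{\xi_1}$ for every $\ve\kge 0$, and taking the supremum over $\ve$ gives $\sz(X\oplus_1 Y)\keq\eta(B_{X^*}\ktimes B_{Y^*})\kleq\omega^{\xi_1}\keq\max\{\sz(X),\sz(Y)\}$, which together with the lower bound completes the proof.

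The step I expect to be the main obstacle — or at least the one that must be handled with care — is this last passage from the Leibniz inclusion to the sharp bound. A crude reading of the inclusion only yields $\sz(X\oplus_1 Y)\kleq\sz(X)\boxplus\sz(Y)$, which in general strictly exceeds $\max\{\sz(X),\sz(Y)\}$ (for instance $\omega\boxplus\omega\keq\omega\kcdot 2$). Two features conspire to rescue the sharp estimate: Lemma~\ref{lem:szlenk-leibnitz}(iii) is phrased in terms of the \emph{natural} (Hessenberg) sum $\boxplus$ rather than ordinary ordinal addition, and, for a space with separable dual, the Szlenk index has the special form $\omega^\xi$ with non-attained supremum, so that at each \emph{fixed} $\ve$ both one-variable indices sit strictly below $\omega^{\xi_1}$ and the additive indecomposability of $\omega^{\xi_1}$ under $\boxplus$ keeps the two-variable index below $\omega^{\xi_1}$. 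Carrying this out rigorously mainly requires being scrupulous about the distinction between $\kle$ and $\kleq$ (the per-$\ve$ bound is strict, but the supremum over $\ve$ need not be) and about why $\beta\kleq\eta(B_{X^*},\ve/2)$; the remaining ingredients — the identification of $(X\oplus_1 Y)^*$ and of its unit ball, and Lemma~\ref{lem:szlenk-leibnitz}(iii) itself — are already available.
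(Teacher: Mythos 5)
Your overall strategy coincides with the paper's up to the last step: the lower bound via Theorem~\ref{thm:szlenk-properties}(ii), the identification of $B_{(X\oplus_1Y)^*}$ with $B_{X^*}\ktimes B_{Y^*}$, and Lemma~\ref{lem:szlenk-leibnitz}(iii) as the engine are all exactly what the paper does. The gap is in what you yourself call the crucial extra input: the strict per-$\ve$ inequality $\eta(B_{X^*},\ve)\kle\sz(X)$ for every $\ve\kge 0$, i.e.\ non-attainment of the supremum $\sup_{\ve>0}\eta(B_{X^*},\ve)$. That is a true fact for spaces with separable dual, but it is not what Theorem~\ref{thm:szlenk=weak} asserts: the non-attainment clause there concerns the supremum $\sup_{\rho>0}\wi(\cF_\rho)$ of the weak indices of the trees $\cF_\rho$, not the supremum over $\ve$ in the definition of $\sz(X)$. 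As written, the step ``$\beta\kleq\eta(B_{X^*},\ve/2)\kle\omega^{\xi_1}$'' is therefore unsupported by anything quoted in the paper; to keep your route you would have to prove the per-$\ve$ strict bound separately (e.g.\ by a Lancien-type homogeneity/convexity argument for the ball, or by the quantitative comparison between $\eta(\cdot,\ve)$ and the indices $\wi(\cF_\rho)$ from~\cite{AJO}), and that is a genuine missing ingredient, not a bookkeeping point.

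The paper sidesteps this with a cheaper final step, and your argument is repaired the same way: use only the non-strict bounds $\eta(B_{X^*},\ve/2)\kleq\omega^{\xi_1}$ and $\eta(B_{Y^*},\ve/2)\kleq\omega^{\xi_1}$, and apply Lemma~\ref{lem:szlenk-leibnitz}(iii) with $\alpha\keq\omega^{\xi_1}\kcdot 2\kplus 1$. Since any decomposition $\beta\boxplus\gamma\keq\alpha$ forces $\beta$ or $\gamma$ to exceed $\omega^{\xi_1}$ strictly, every term of the union is empty, so $\big(B_{X^*}\ktimes B_{Y^*}\big)^{(\alpha)}_\ve\keq\emptyset$ for all $\ve\kge 0$ and hence $\sz(X\oplus_1Y)\kleq\omega^{\xi_1}\kcdot 2\kplus 1$. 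Now invoke the $\omega$-power form of Theorem~\ref{thm:szlenk=weak} for the sum itself (it has separable dual): $\sz(X\oplus_1Y)\keq\omega^{\xi}$ together with $\omega^{\xi}\kleq\omega^{\xi_1}\kcdot 2\kplus 1\kle\omega^{\xi_1+1}$ gives $\xi\kleq\xi_1$, i.e.\ $\sz(X\oplus_1Y)\kleq\omega^{\xi_1}$. Your observation about additive indecomposability of $\omega^{\xi_1}$ under the natural sum is correct and plays the same structural role, but the paper's trick of transferring the ``index is an $\omega$-power'' fact to the sum, rather than demanding strict per-$\ve$ bounds for the summands, is what makes the proof close using only the stated results.
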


\begin{proof}
  The inequality $\sz\big( X\oplus_1 Y\big)\kgeq \max\{
  \sz(X),\sz(Y)\}$ follows immediately from
  Theorem~\ref{thm:szlenk-properties}~(ii). The reverse inequality is
  trivial if either both $X$ and $Y$ are finite-dimensional, or one of
  $X$ and $Y$ has non-separable dual. So we can assume by the last
  part of Theorem~\ref{thm:szlenk=weak} that $\max\{
  \sz(X),\sz(Y)\}\keq \omega^\eta$ for some $0\kle\eta\kle\omega_1$.

  Now let $K\keq B_{X^*},\ L\keq B_{Y^*}$ and set $\alpha\keq
  \omega^\eta\kcdot 2\kplus 1$. Applying part~(iii) of
  Lemma~\ref{lem:szlenk-leibnitz} we obtain
  \[
  \big( B_{(X\oplus_1 Y)^*} \big) ^{(\alpha)}_\ve = \big(
  B_{X^*}\times B_{Y^*} \big)^{(\alpha)}_\ve = \emptyset\ .
  \]
  It follows that $\sz(X\oplus _1 Y)\kle \alpha$, and hence
  $\sz(X\oplus _1 Y)\kleq \omega^\eta$, as required.
\end{proof}

\begin{prop}
  \label{prop:szlenk-ell2-sum}
  Let $(X_n)$ be a sequence of separable Banach spaces. Let $X\keq
  \Big( \bigoplus _n X_n \Big)_{\ell_2}$ be the $\ell_2$-sum of
  $(X_n)$, and let $\alpha$ be a countable ordinal. If $\sz(X_n)\kleq
  \omega^\alpha$ for all $n\kin\bn$, then
  $\sz(X)\kleq\omega^{\alpha+1}$.
\end{prop}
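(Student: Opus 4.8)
The plan is to reduce the statement, via the description of the Szlenk index through the weak index (Theorem~\ref{thm:szlenk=weak}), to a tree‑index estimate, and then to combine two ingredients: the finite‑sum estimate (Proposition~\ref{prop:szlenk-sum}) to control the part of a weakly null tree that ``lives'' in finitely many coordinates, and the $\ell_2$‑geometry to bound how often such a tree can ``escape to infinity''. First the reductions. Each $\sz(X_n)\kleq\omega^\alpha$ is a countable ordinal, so each $X_n^*$ is separable (Theorem~\ref{thm:szlenk-properties}(i)), hence so is $X^*\cong\bigl(\bigoplus_n X_n^*\bigr)_{\ell_2}$; in particular $\ell_1\not\hookrightarrow X$. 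We may assume $X$ infinite‑dimensional and $\alpha\kgeq 1$ (the remaining cases being trivial or analogous and simpler). By Theorem~\ref{thm:szlenk=weak}, $\sz(X)\keq\sup_{\rho\in(0,1)}\wi(\cF_\rho)$, where $\cF_\rho\ksubset S_X^{<\omega}$ is the hereditary tree of those $(x_i)_{i=1}^n$ with $\bignorm{\sum a_ix_i}\kgeq\rho\sum a_i$ for all $(a_i)\ksubset\br^+$. Thus it is enough to produce, for each $\rho\kin(0,1)$, an integer $R\keq R(\rho)$ with $\wi(\cF_\rho)$ at most a finite multiple of $\omega^\alpha$; then $\wi(\cF_\rho)<\omega^{\alpha+1}$ for every $\rho$ and hence $\sz(X)\kleq\omega^{\alpha+1}$.

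Fix $\rho\kin(0,1)$, pick $\delta\kin(0,\rho/4)$, and for $N\kin\bn$ let $P_{\leq N},P_{>N}$ be the canonical norm‑one projections of $X$ onto $Y_N:=\bigl(\bigoplus_{n\leq N}X_n\bigr)_{\ell_2}$ and onto $\bigl(\bigoplus_{n>N}X_n\bigr)_{\ell_2}$; call $x\kin S_X$ \emph{$N$‑localized} if $\norm{P_{>N}x}<\delta$. The first ingredient is the following: for every node $\vx\kin\cF_\rho$ and every $N$, the subtree of $\cF_\rho(\vx)$ consisting of the sequences all of whose entries are $N$‑localized has weak index at most $\omega^\alpha$. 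Indeed, replacing each such entry $y$ by $P_{\leq N}y/\norm{P_{\leq N}y}\kin S_{Y_N}$ changes it by $O(\delta)$, and since $P_{\leq N}$ is weak‑to‑weak continuous and $B_{Y_N}$ is weakly metrizable, a standard perturbation argument (in the spirit of Proposition~\ref{prop:fat-index}) identifies this subtree, up to the weak‑index estimate, with a subtree of $\cF^{Y_N}_{\rho/2}$ (note that setting the coefficients of the entries of $\vx$ equal to zero in the defining inequality shows the projected sequences themselves are admissible for $\cF^{Y_N}_{\rho/2}$). Hence its weak index is at most $\wi(\cF^{Y_N}_{\rho/2})\kleq\sz(Y_N)$ by Theorem~\ref{thm:szlenk=weak}, and $\sz(Y_N)\keq\max_{n\leq N}\sz(X_n)\kleq\omega^\alpha$ by Proposition~\ref{prop:szlenk-sum} applied inductively (finite $\oplus_1$‑ and $\oplus_2$‑sums are isomorphic and $\sz$ is an isomorphic invariant).

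The second ingredient is combinatorial and uses the $\ell_2$‑geometry. Call $(x_1,\dots,x_r)$ \emph{$\delta$‑successive} if there are $0\keq N_0\kleq\dots\kleq N_r$ with $\norm{x_i-P_{(N_{i-1},N_i]}x_i}<2\delta$ for all $i$; then $\Bignorm{\sum_{i=1}^r x_i}\kleq r^{1/2}\kplus 2r\delta$, so a $\delta$‑successive sequence of unit vectors lying in $\cF_\rho$ has length at most $R:=\lceil(\rho\kminus 2\delta)^{-2}\rceil$ (take all $a_i\keq 1$). Let the \emph{escape rank} of a node $\vx\kin\cF_\rho$ be the length of its longest $\delta$‑successive subsequence; this is non‑increasing under passing to subsequences, so $T_r:=\{\vx\kin\cF_\rho:\text{escape rank of }\vx\kleq r\}$ is a hereditary tree, $T_0\keq\{\emptyset\}$, and $T_R\keq\cF_\rho$ by the length bound just proved. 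It remains to show $\wi(T_{r+1})\kleq\omega^\alpha\kplus\wi(T_r)$ for every $r$, for then $\wi(\cF_\rho)\keq\wi(T_R)$ is at most a finite multiple of $\omega^\alpha$ and we are done. For this one argues that passing $\omega^\alpha$‑many weak derivatives of $T_{r+1}$ already forces one extra ``escape'': if $\vx$ is a non‑empty node of $(T_{r+1})\wder{\omega^\alpha+1}$, then the subtree $T_{r+1}(\vx)$ has weak index $>\omega^\alpha$, so by the first ingredient it is not contained in its $N$‑localized part for any $N$; feeding this into the tree/Schreier correspondence of Proposition~\ref{prop:S-index-fine-schreier} one extracts, above $\vx$, an extension whose longest $\delta$‑successive subsequence is strictly longer than that of $\vx$. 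Thus $(T_{r+1})\wder{\omega^\alpha+1}$ is, after a perturbation, a subtree of $T_r$, and therefore has weak index at most $\wi(T_r)$.

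The step I expect to be the main obstacle is precisely this last gluing: upgrading the soft statement ``a weakly null tree of weak index $>\omega^\alpha$ must escape every finite coordinate window'' to the quantitative assertion that $\omega^\alpha$ weak derivatives lengthen the longest $\delta$‑successive subsequence by at least one. The difficulty is that a weakly null sequence can drift toward infinity while still carrying non‑negligible mass on low coordinates, so some care is needed — a diagonal argument over the windows, together with control of how the $O(\delta)$ perturbation errors accumulate along the at most $R$ escapes — to keep the relevant $\delta$‑successive subsequences and the projections under control. Everything else is routine once Proposition~\ref{prop:szlenk-sum} and the elementary $\ell_2$‑inequality $\Bignorm{\sum_{i=1}^r x_i}\kleq r^{1/2}\kplus 2r\delta$ for $\delta$‑successive unit vectors are in place.
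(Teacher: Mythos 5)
Your overall skeleton is the same as the paper's --- reduce via Theorem~\ref{thm:szlenk=weak} to bounding $\wi(\cF_\rho)$ by a finite multiple of $\omega^\alpha$ for each $\rho$, control the part of the tree living in a coordinate window by Proposition~\ref{prop:szlenk-sum}, and use the $\ell_2$-geometry to bound the number of ``escapes'' --- and your two stated ingredients are fine. The genuine gap is exactly the step you flag: the inequality $\wi(T_{r+1})\kleq\omega^\alpha\kplus\wi(T_r)$, and as formulated your argument for it does not work. Knowing that $T_{r+1}(\vx)$ has weak index greater than $\omega^\alpha$, hence is not contained in its $N$-localized part, only yields an extension containing an entry $y$ with $\norm{y\kminus P_{[1,N]}(y)}\kgeq\delta$; that is far from what is needed to lengthen a $\delta$-successive chain, which requires an entry with $\norm{P_{[1,N]}(y)}$ at most about $2\delta$. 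The complementary route fails as well: if $\vx$ already has escape rank $r\kplus1$ with last window ending at $N$, then $T_{r+1}(\vx)$ consists of extensions all of whose entries have $[1,N]$-mass bounded \emph{below} by roughly $2\delta$, and there is no reason such a tree should have weak index $\kleq\omega^\alpha$: the lower $\ell_1$-estimates defining $\cF_\rho$ need not pass to the $P_{[1,N]}$-components, since the functionals witnessing them may act almost entirely on the tails, so you cannot transfer this tree into $X_1\oplus_2\dots\oplus_2 X_N$ and quote $\sz\kleq\omega^\alpha$. This is precisely the ``drift'' phenomenon you mention, and it is not a matter of bookkeeping the $O(\delta)$ perturbation errors.

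The paper gets around this by measuring escape against a norming \emph{functional} rather than against the mass distribution of the vectors. By Hahn--Banach, $(x_i)_{i=1}^n\kin\cF_\rho$ if and only if a single $x^*\kin S_{X^*}$ satisfies $x^*(x_i)\kgeq\rho$ for all $i$. Given $\vx$ with $\wi\big(\cF_\rho(\vx)\big)\kgeq\omega^\alpha\kcdot k$ and a window $[1,M]$, one forms the subtree $\cG$ of those $\vy\kin\cF_\rho(\vx)$ admitting a witness with $x^*\big(P_{[1,M]}(y_i)\big)\kgeq\rho/4$ for all $i$. This condition, unlike yours, does project cleanly onto $X_1\oplus_2\dots\oplus_2 X_M$, so $\wi(\cG)\kleq\omega^\alpha$ by Theorem~\ref{thm:szlenk=weak} and Proposition~\ref{prop:szlenk-sum}, and one can choose $\vy\kin\big(\cF_\rho(\vx)\big)\wder{\omega^\alpha\cdot(k-1)}\ksetminus\cG$; failure of membership in $\cG$ means that \emph{every} norming functional falls below $\rho/4$ on the window for at least one entry of $\vy$. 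Iterating this $K\kapprox 4/\rho^2$ times, choosing each window to almost contain the supports of the previously selected blocks, the single functional $x^*$ norming the concatenated node must pick up more than $\rho/2$ from $K$ disjoint coordinate intervals, whence $\norm{x^*}\kgeq\sqrt{K}\,\rho/2\kgeq1$, a contradiction; this is the rigorous form of your ``at most $R$ escapes''. If you want to keep your $T_r$-scheme you would have to redefine the escape rank in this functional-based way; with the vector-based definition the key inductive inequality does not follow from your stated ingredients.
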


In the proof we shall use the following notation. For $n\kin\bn$ we
denote by $P_n$ the canonical projection of $X$ onto $X_n$, \ie for
$x\keq(x_i)\kin X$ with $x_i\kin X_i$ for all $i\kin\bn$ we have
$P_n(x)\keq x_n$. For a finite subset $A$ of $\bn$ we let $P_A\keq
\sum_{n\in A} P_n$.

\begin{proof}
  Assume for a contradiction that $\sz(X)\kge\omega^{\alpha+1}$. By
  Theorem~\ref{thm:szlenk=weak} there exists $\rho\kin(0,1)$ such
  that setting
  \[
  \cF = \big\{ (x_i)_{i=1}^n\kin S_X^{<\omega}:\,n\kin\bn,\
  \bignorm{\sum a_ix_i}\kgeq\rho \sum a_i\ \V (a_i)_{i=1}^n\ksubset
  \br^+\big\}
  \]
  we have $\wi(\cF)\kge\omega^{\alpha+1}$. Note that by the geometric
  form of the Hahn-Banach theorem we have
  $(x_1,\dots,x_n)\kin S_X^{<\omega}$ belongs to $\cF$ if and only if
  there exists $x^*\kin S_{X^*}$ such that $x^*(x_i)\kgeq\rho$ for
  each $i\keq 1,\dots,n$.

  We are going to show the following claim. Let $\vx\keq
  (x_1,\dots,x_m)\kin S_X^{<\omega}$, and let $k\kgeq 1,\ M\kgeq 0$ be
  integers. Assume that $\wi\big(\cF(\vx)\big) \kge\omega^\alpha
  \kcdot k$. Then there exists $\vy\keq (y_1,\dots,y_n)\kin
  S_X^{<\omega}$ such that
  \begin{mylist}{(ii)}
  \item
    $\wi \big( \cF (\vx,\vy)\big) >\omega^\alpha\kcdot (k\kminus 1)$,
    and
  \item
    for all $x^*\kin S_{X^*}$ there exists $i\kin\{1,\dots,n\}$ such
    that $x^*\big(P_{[1,M]}(y_i)\big)\kle \rho/4$.
  \end{mylist}
  Let us first see how this claim completes the proof. Fix $K\kin\bn$
  with $K\kge 4/\rho^2$.
  We obtain sequences $\vx_1, \dots, \vx_K$ in $S_X^{<\omega}$ and
  $N_1\kle \dots \kle N_K$ in $\bn$ recursively as follows: at the
  $j^{\text{th}}$ step we apply the claim with $\vx\keq
  (\vx_1,\dots,\vx_{j-1}),\ k\keq K\kminus j\kplus 1$ and $ M\keq
  N_{j-1}$ to obtain $\vy$ as above (for $j\keq 1$ we begin with
  $\vx\keq\emptyset,\ k\keq K$ and $M\keq N_0\keq 0$). Then we set
  $\vx_j\keq \vy$ and choose $N_j\kge N_{j-1}$ such that writing
  $\vx_j\keq (y_{j,1},\dots,y_{j,L_j})$ we have
  \[
  \norm{y_{j,\ell}-P_{[1,N_j]}(y_{j,\ell})}<\rho/4\qquad\text{for
  }\ell\keq 1,\dots,L_j\ .
  \]
  From property~(i) we get in particular that
  $(\vx_1,\dots,\vx_K)\kin\cF$. Thus there exists $x^*\kin S_{X^*}$
  such that
  \[
  x^*(y_{j,\ell})\geq\rho\qquad\text{for all }j\keq 1,\dots,K,\
  \ell\keq 1,\dots, L_j\ .
  \]
  It follows that for each $j\keq 1,\dots,K$ we find $1\kleq
  \ell_j\kleq L_j$ such that
  \[
  x^*\big( P_{[N_{j-1}+1,N_j]} (y_{j,\ell_j}) \big) > \rho/2\ ,
  \]
  and hence we get $\norm{x^*}\kgeq \frac{\sqrt{K}\rho}2\kge 1$, which
  is a contradiction.

  We now turn to the proof of the claim. Define
  \begin{multline*}
    \cG =\Big\{ (y_1,\dots,y_n)\kin \cF(\vx):\, n\kin\bn,\ \E x^*\kin
    S_{X^*},\\
    x^*\big(P_{[1,M]}(y_i)\big) \kgeq\rho/4 \text{ for }i\keq
    1,\dots,n\Big\}\ .
  \end{multline*}
  Note that $\cG$ is a tree on $S_X^{<\omega}$. If
  $\wi(\cG)\kge\omega^\alpha$, then by Theorem~\ref{thm:szlenk=weak}
  we have $\sz(X_1\oplus_2\dots\oplus_2 X_M)\kge \omega^\alpha$
  contradicting Proposition~\ref{prop:szlenk-sum}. So we have
  $\wi(\cG)\kleq \omega^\alpha$. On the other hand, we have
  \[
  \wi \Big( \big( \cF(\vx) \big)\wder{\omega^\alpha\cdot (k-1)} \Big)
  > \omega^\alpha\ .
  \]
  Thus we can find
  \[
  \vy\kin \big( \cF(\vx) \big)\wder{\omega^\alpha\cdot (k-1)}
  \setminus \cG\ .
  \]
  Properties (i) and (ii) are now easily checked.
\end{proof}

\begin{prop}
  \label{prop:tsirelson-szlenk}
  Let $\alpha$ be an ordinal with $1\kleq\alpha\kle\omega_1$. The
  Szlenk-index of the Tsirelson space of order $\alpha$ is given by
  \[
  \sz(T_\alpha) = \omega ^{\alpha\cdot\omega}\ .
  \]
\end{prop}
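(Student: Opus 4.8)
The plan is to compute $\sz(T_\alpha)$ using the identity $\sz(T_\alpha)\keq\sup_{\rho\kge 0}\wi(\cF_\rho)$ of Theorem~\ref{thm:szlenk=weak} (available because $T_\alpha$ is separable and reflexive, hence does not contain $\ell_1$ and has separable dual). Since $\omega^{\alpha\cdot\omega}\keq\sup_n\omega^{\alpha\cdot n}$ and $1\kleq\alpha$, it suffices to establish $\wi(\cF_{2^{-n}})\kgeq\omega^{\alpha\cdot n}\kplus 1$ for every $n\kin\bn$ (which forces $\sz(T_\alpha)\kgeq\omega^{\alpha\cdot\omega}$) and $\wi(\cF_\rho)\kleq\omega^{\alpha\cdot\omega}$ for every $\rho\kge 0$ (which forces $\sz(T_\alpha)\kleq\omega^{\alpha\cdot\omega}$).

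\emph{Lower bound.} I would use the families $\cS_\alpha^{[n]}$ obtained by iterating $\cS_\alpha$-admissibility $n$ times: $\cS_\alpha^{[0]}$ consists of the sets of size at most one, and $\cS_\alpha^{[n]}$ is the set of all unions $A_1\cup\dots\cup A_r$ with $A_1\kle\dots\kle A_r$, each $A_i\kin\cS_\alpha^{[n-1]}\setminus\{\emptyset\}$, and $\{\min A_i\}\kin\cS_\alpha$ (together with $\emptyset$). These are compact, hereditary and spreading, and a routine induction from $\cbi(\cS_\alpha)\keq\omega^\alpha\kplus1$ (see~\eqref{eq:cb-index-schreier}) gives $\cbi(\cS_\alpha^{[n]})\kgeq\omega^{\alpha\cdot n}\kplus1$. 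An induction on $n$, applying the defining norm formula of $T_\alpha\keq T_{\cS_\alpha,1/2}$ once per step (the coefficient $\tfrac12$ in front of an $\cS_\alpha$-admissible sum turning the bound $2^{-(n-1)}$ into $2^{-n}$), shows that $\{m_1,\dots,m_k\}\kin\cS_\alpha^{[n]}$ implies $\bignorm{\sum_i a_i e_{m_i}}_{T_\alpha}\kgeq 2^{-n}\sum_i a_i$ for all scalars $a_i\kgeq 0$; equivalently the block tree $\cT_n\keq\big\{(e_{m_1},\dots,e_{m_k}):\{m_1,\dots,m_k\}\kin\cS_\alpha^{[n]}\big\}$ on $S_{T_\alpha}$ is contained in $\cF_{2^{-n}}$. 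A sequence of basis vectors is a block sequence exactly when its indices strictly increase, so $\bli(\cT_n)\keq\cbi(\cS_\alpha^{[n]})\kgeq\omega^{\alpha\cdot n}\kplus1$; since $(e_i)$ is a shrinking FDD of $T_\alpha$, \eqref{eq:bli-wi} and monotonicity of the weak index give $\wi(\cF_{2^{-n}})\kgeq\wi(\cT_n)\kgeq\bli(\cT_n)\kgeq\omega^{\alpha\cdot n}\kplus1$.

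\emph{Upper bound.} Fix $\rho\kge 0$. A standard gliding-hump perturbation of the weakly null trees witnessing $\wi(\cF_\rho)$ (legitimate since $(e_i)$ is shrinking; cf.\ Proposition~\ref{prop:fat-index}, Proposition~\ref{prop:S-index-fine-schreier} and the remarks around~\eqref{eq:bli-wi}) yields a hereditary block tree $\cG$ of $(e_i)$ in $T_\alpha$, contained in $\cF_\sigma$ with $\sigma\keq\rho/4$, and satisfying $\wi(\cF_\rho)\kleq\bli(\cG)$. The Tsirelson input is the standard fact $(\star)$: a normalised block sequence $(z_j)_{j=1}^k$ of $(e_i)$ in $T_\alpha$ is $C_0$-dominated by $(e_{p_j})$, where $p_j\keq\min\supp_E(z_j)$ and $C_0$ is an absolute constant. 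Applying $(\star)$ to the subsequences of a member of $\cG$ shows that the compression $\cGt$ of $\cG$ is contained in $\cH_\delta\keq\big\{F\kin\fin{\bn}:\bignorm{\sum_{j\in G}e_j}_{T_\alpha}\kgeq\delta\abs{G}\text{ for every }G\ksubseteq F\big\}$, where $\delta\keq\sigma/C_0$. This family $\cH_\delta$ is hereditary; it is spreading because $(e_i)$ is $1$-right-dominant (Proposition~\ref{prop:tsirelson-properties}); and it is compact because $\ell_1$ does not embed into $T_\alpha$. The crux is then the estimate $\cbi(\cH_\delta)\kleq\omega^{\alpha\cdot m}\kplus1$ for a suitable finite $m\keq m(\delta)$ (of order $\log_2(1/\delta)$), proved by induction on the Cantor--Bendixson rank of $F\kin\cH_\delta$: at each step one extracts from the defining norm formula a first-level $\cS_\alpha$-admissible partition of $F$, the factor $\tfrac12$ forcing the efficiency of some piece to grow, so that after at most $\sim\log_2(1/\delta)$ levels every piece is a singleton. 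Finally $\bli(\cG)\kleq\cbi(\cGt)$: an induction of the same type as in Lemma~\ref{lem:supp-block-tree} and Lemma~\ref{lem:discrete-compression} shows that for each ordinal $\gamma$ the compression of the $\gamma$-th block derivative of $\cG$ lies in the $\gamma$-th Cantor--Bendixson derivative of $\cGt$ (when an infinite block sequence extends a node, its supports run off to infinity, so the node's support set belongs to the next derivative of $\cGt$). Hence $\wi(\cF_\rho)\kleq\bli(\cG)\kleq\cbi(\cGt)\kleq\cbi(\cH_\delta)\kleq\omega^{\alpha\cdot m}\kplus1\kle\omega^{\alpha\cdot\omega}$.

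\emph{Main obstacle.} The lower bound is essentially a direct computation once the norm recursion and the Cantor--Bendixson indices of iterated Schreier families are in hand. The substance is in the upper bound, and specifically in the Tsirelson norm estimate $\cbi(\cH_\delta)\kle\omega^{\alpha\cdot\omega}$ --- the assertion that a finite set on which the basis of $T_\alpha$ is uniformly $\delta$-equivalent to the unit vector basis of $\ell_1$ must be ``$\cS_\alpha$-simple'', lying (after a shift) in $\cS_{\alpha\cdot m}$ for $m\kapprox\log_2(1/\delta)$. The fact $(\star)$ and the inequality $\bli(\cG)\kleq\cbi(\cGt)$ are also needed, but are routine given the tools of Section~\ref{section:ordinal}.
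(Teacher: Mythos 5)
Your lower bound is fine and is essentially the paper's own argument: the paper works directly with $\cS_{\alpha\cdot n}$ (whose members span $2^n$-isomorphic copies of $\ell_1^{|F|}$) and invokes Proposition~\ref{prop:S-index-fine-schreier} together with Theorem~\ref{thm:szlenk=weak}, while you use the iterated families $\cS_\alpha^{[n]}$ and the block index; these are interchangeable. In the upper bound, the routine reductions you list are indeed routine: perturbing a weakly null tree witnessing $\wi(\cF_\rho)$ to a hereditary block tree, using $B$-block-stability of $(e_i)$ to dominate a block sequence by $(e_{\min\supp_E(z_j)})$, the inequality $\bli(\cG)\kleq\cbi(\cGt)$ (a straightforward induction in the spirit of Lemma~\ref{lem:supp-block-tree}), and compactness of $\cH_\delta$.

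The genuine gap is exactly at the step you yourself flag as the crux: the estimate $\cbi(\cH_\delta)\kleq\omega^{\alpha\cdot m}\kplus 1$ with $m$ finite. This statement is, up to the reduction you already performed, the upper estimate for the Bourgain $\ell_1$-index of $T_\alpha$, and your one-sentence induction does not prove it. When an admissible decomposition attains the norm of $\sum_{i\in F}e_i$, the factor $\tfrac12$ only forces the \emph{average} efficiency of the pieces to double; it does not force each piece's efficiency up, the pieces can be extremely uneven, the $\norm{\cdot}_\infty$ term makes small pieces fully efficient at no combinatorial cost, and at every level the norming decompositions of the pieces are again arbitrary admissible families, so nothing in the recursion as stated yields ``after $\log_2(1/\delta)$ levels every piece is a singleton,'' nor the membership of $F$ in $\cS_{\alpha\cdot m}$ that you need to majorize the Cantor--Bendixson index. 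Controlling this interplay between the lower $\ell_1$-estimate on all subsets and iterated $\cS_\alpha$-admissible decompositions is precisely the substance of the computation $I(T_\alpha)\keq\omega^{\alpha\cdot\omega}$, which the paper does not reprove: it converts the weakly null tree into an $\ell_1$-$K$-block basis tree and bounds its order by the $\ell_1$-index of $T_\alpha$ as computed in~\cite{JuO} (see also~\cite{LeTa:03}). So either you quote that result---in which case your detour through $\cH_\delta$ is unnecessary and you essentially recover the paper's proof---or you must supply a genuine proof of the bound on $\cbi(\cH_\delta)$, which is a substantial argument, not a routine induction on Cantor--Bendixson rank.
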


\begin{proof}
  For each $\rho\kin (0,1)$ let 
  \[
  \cF_\rho = \bigg\{ (x_i)_{i=1}^n\kin
  S_{T_\alpha}^{<\omega}:\,n\kin\bn,\ \Bignorm{\sum_{i=1}^n
  a_ix_i}\kgeq\rho \sum_{i=1}^n a_i\ \V (a_i)_{i=1}^n\ksubset
  \br^+\bigg\}\ .
  \]
  We first show that $\sz(T_\alpha)\kgeq \omega
  ^{\alpha\cdot\omega}$. Let $(e_i)$ be the unit vector basis of
  $T_\alpha$. It follows from the definition of $T_\alpha$ that for
  each $n\kin\bn$ if $F\kin \cS_{\alpha\cdot n}$, then $(e_i)_{i\in
  F}$ is $2^n$-equvalent to the unit vector basis of
  $\ell_1^{\abs{F}}$, so in particular $(e_i)_{i\in F}\kin
  \cF_{2^{-n}}$. Hence by Proposition~\ref{prop:S-index-fine-schreier}
  and Theorem~\ref{thm:szlenk=weak} we have
  \[
  \omega^{\alpha\cdot n} < \wi(\cF_{2^{-n}}) < \sz(T_\alpha)\ .
  \]
  Since $n\kin\bn$ was arbitrary, the inequality $\sz(T_\alpha)\kgeq
  \omega ^{\alpha\cdot\omega}$ follows at once.

  For the reverse inequality assume that
  $\omega^\gamma\kle\sz(T_\alpha)$, where
  $\gamma\keq\alpha\kcdot\omega$. Then by
  Theorem~\ref{thm:szlenk=weak} there exists $\rho\kin (0,1)$ with
  $\omega^\gamma\kle\wi(\cF_\rho)$, and by
  Proposition~\ref{prop:S-index-fine-schreier} there is a family
  $\cG\keq \big(x_F\big) _{F\in\cS_\gamma\setminus\{\emptyset\}}
  \ksubset\cF_\rho$ such that for all $F\kin \cS_\gamma\ksetminus
  \MAX(\cS_\gamma)$ the sequence $\big(x_{F\cup\{n\}}\big) _{n>\max
  F}$ is weakly null.

  By standard perturbation arguments we may, after making $\rho$
  smaller and replacing $(x_F)
  _{F\in\cS_\gamma\setminus\{\emptyset\}}$ by $(x_{F'})
  _{F\in\cS_\gamma\setminus\{\emptyset\}}$ for an appropriate pruning
  function $F\mapsto F'\colon \cS_\gamma\to\cS_\gamma$, if necessary,
  assume that $\cG$ is a block tree of $(e_i)$ in $T_\alpha$.

  We now apply a result of R.~Judd and the first named
  author~\cite{JuO}. In their
  terminology $\cG$ is an \emph{$\ell_1$-$K$-block basis tree} of
  $(e_i)$ in $T_\alpha$ with $K\keq \rho^{-1}$ (we are using the
  $1$-unconditionality of $(e_i)$), and hence its order o$(\cG)$ is at
  most the Bourgain $\ell_1$-index of $T_\alpha$ which is shown to be
  $\omega^\gamma$ in~\cite{JuO}. On the other hand, by
  Proposition~\ref{prop:S-index-fine-schreier} we have
  o$(\cG)\kge\omega^\gamma$. This contradiction completes the proof.
\end{proof}

\begin{prop}
  \label{prop:upper-tsirelson-upper-szlenk}
  Let $\alpha$ be a countable ordinal and let $Z$ be a Banach space
  with an FDD $E\keq(E_i)$ that satisfies subsequential
  $T_\alpha$-upper estimates. Then $\sz(Z)\kleq
  \omega^{\alpha\cdot\omega}$.
\end{prop}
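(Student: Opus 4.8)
The plan is to reduce the statement, via Theorem~\ref{thm:szlenk=weak}, to bounding $\wi(\cF_\rho)$ for each $\rho\kin(0,1)$, then to transfer that bound first to a block tree in $Z$ and, using the upper estimates, to a block tree in $T_\alpha$, and finally to invoke $\sz(T_\alpha)\keq\omega^{\alpha\cdot\omega}$ (Proposition~\ref{prop:tsirelson-szlenk}). The first task, however, is to check that $(E_i)$ is a shrinking FDD, since otherwise Theorem~\ref{thm:szlenk=weak} is not available. If $(E_i)$ were not shrinking there would be a normalized block sequence $(z_k)$ of $(E_i)$, a functional $z^*\kin S_{Z^*}$ and $\delta\kge 0$ with $z^*(z_k)\kgeq\delta$ for all $k$; writing $m_k\keq\min\supp_E(z_k)$, the subsequential $C$-$T_\alpha$-upper estimate gives $\bignorm{\sum a_kt_{m_k}}_{T_\alpha}\kgeq C^{-1}\bignorm{\sum a_kz_k}\kgeq(\delta/C)\sum a_k$ for all $a_k\kgeq 0$, so by $1$-unconditionality of $(t_i)$ the subsequence $(t_{m_k})$ would dominate the $\ell_1$-basis. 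But $(t_{m_k})$ is a normalized, weakly null (the basis of the reflexive space $T_\alpha$ is shrinking) basic sequence, and by Mazur's theorem $0$ lies in the closed convex hull of every tail of it, which is incompatible with a lower $\ell_1$-estimate. Hence $(E_i)$ is shrinking, $Z^*\keq\Zs$ is separable, in particular $\ell_1$ does not embed into $Z$, and so $\sz(Z)\keq\sup_{\rho>0}\wi(\cF_\rho)$ by Theorem~\ref{thm:szlenk=weak}.

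Next I would fix $\rho\kin(0,1)$ and an ordinal $\alpha_0\kle\wi(\cF_\rho)$. By Proposition~\ref{prop:S-index-fine-schreier} (valid since $Z^*$ is separable) there is a family $(x_F)_{F\in\cF_{\alpha_0}\setminus\{\emptyset\}}\ksubset\cF_\rho$ whose nodes are weakly null. Choose $\veb\keq(\ve_i)\ksubset(0,1)$ with $\sum\ve_i\kleq\rho/2$. By standard perturbation arguments, passing to a pruning function $F\mapsto F'$ on $\cF_{\alpha_0}$ exactly as in the remark preceding Proposition~\ref{prop:compression}, one obtains a family $(y_F)\ksubset S_Z$ whose nodes are block sequences of $(E_i)$ and with $\norm{y_F\kminus x_{F'}}\kleq\ve_{\abs{F}}$ for all $F$; since $\max_ia_i\kleq\sum_ia_i$ for nonnegative coefficients, every initial segment of every branch of $(y_F)$ satisfies a lower $(\rho/2)$-$\ell_1$-estimate on nonnegative coefficients. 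Let $\cB_\sigma$ denote the hereditary block tree of $(E_i)$ in $Z$ consisting of all finite normalized block sequences satisfying a lower $\sigma$-$\ell_1$-estimate on nonnegative coefficients. Then $(y_F)\ksubset\cB_{\rho/2}$ and its nodes are infinite normalized block sequences, so Proposition~\ref{prop:S-index-fine-schreier} applied with $S$ the set of infinite normalized block sequences (i.e.\ for the block index) gives $\alpha_0\kle\bli(\cB_{\rho/2})$; as $\alpha_0\kle\wi(\cF_\rho)$ was arbitrary, $\wi(\cF_\rho)\kleq\bli(\cB_{\rho/2})$.

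For the final transfer, let $\cH_\sigma$ be the analogous hereditary block tree of the basis $(t_i)$ in $T_\alpha$. The map $\Phi$ sending a finite block sequence $(z_1,\dots,z_k)$ of $(E_i)$ to $\big(t_{m_1},\dots,t_{m_k}\big)$, where $m_i\keq\min\supp_E(z_i)$, takes values in normalized block sequences of $(t_i)$, and by the $C$-$T_\alpha$-upper estimate it maps $\cB_\sigma$ into $\cH_{\sigma/C}$. An easy induction on $\beta$ shows $\Phi\big(\,\big(\cB_\sigma\big)\blder{\beta}\,\big)\ksubset\big(\cH_{\sigma/C}\big)\blder{\beta}$ for all $\beta$: in the successor step one extends $(t_{m_1},\dots,t_{m_k})$ by the infinite normalized block sequence $\big(t_{\min\supp_E(w_j)}\big)_j$, where $(w_j)$ witnesses membership in the block derivative, and the limit step is immediate since $\Phi$ commutes with intersections only up to inclusion. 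Hence $\bli(\cB_\sigma)\kleq\bli(\cH_{\sigma/C})$. Since $T_\alpha$ is reflexive, \eqref{eq:bli-wi} gives $\bli(\cH_{\sigma/C})\kleq\wi(\cH_{\sigma/C})$, and since $\cH_{\sigma/C}$, viewed as a tree on $S_{T_\alpha}$, is contained in the tree $\cF_{\sigma/C}$ associated with $T_\alpha$ in Theorem~\ref{thm:szlenk=weak}, we get $\wi(\cH_{\sigma/C})\kleq\sup_{\tau>0}\wi(\cF_\tau)\keq\sz(T_\alpha)\keq\omega^{\alpha\cdot\omega}$ by Theorem~\ref{thm:szlenk=weak} and Proposition~\ref{prop:tsirelson-szlenk}. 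Chaining these inequalities, $\wi(\cF_\rho)\kleq\bli(\cB_{\rho/2})\kleq\bli(\cH_{\rho/(2C)})\kleq\omega^{\alpha\cdot\omega}$ for every $\rho\kin(0,1)$, whence $\sz(Z)\kleq\omega^{\alpha\cdot\omega}$.

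I expect the genuinely delicate point to be the very first step: extracting shrinkingness of $(E_i)$ (equivalently, the absence of $\ell_1$ in $Z$, which must hold since otherwise $\sz(Z)\keq\omega_1$) out of the bare upper-estimate hypothesis — the Mazur-type argument above is the crux there. The perturbation in the second step is routine but has to be carried out carefully enough that the lower $\ell_1$-estimate is preserved along branches, and the third step is essentially bookkeeping with the derivative operations.
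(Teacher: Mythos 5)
Your proof is correct and follows essentially the same route as the paper: reduce to the weak index via Theorem~\ref{thm:szlenk=weak}, use Proposition~\ref{prop:S-index-fine-schreier} and standard perturbations to pass to block trees, transfer to $T_\alpha$ by sending a block vector $z$ to $e_{\min\supp_E(z)}$ (which lands in a lower-$\ell_1$ tree of $T_\alpha$ by the upper estimates), and conclude with Proposition~\ref{prop:tsirelson-szlenk}; the paper simply transfers the $\cF_\gamma$-indexed family directly and reapplies Proposition~\ref{prop:S-index-fine-schreier} on the $T_\alpha$ side instead of your block-index detour with the map $\Phi$. Your explicit Mazur-type verification that $(E_i)$ is shrinking (hence $Z^*$ separable, so the weak-index machinery applies) is a point the paper leaves implicit, and it is a correct and welcome addition rather than a divergence.
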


\begin{proof}
  For each $\rho\kin(0,1)$ let
  \[
  \cF_\rho = \bigg\{ (x_i)_{i=1}^n\kin S_Z^{<\omega}:\,n\kin\bn,\
  \Bignorm{\sum_{i=1}^n a_ix_i}\kgeq\rho \sum_{i=1}^n a_i\ \V
  (a_i)_{i=1}^n\ksubset \br^+\bigg\}\ .
  \]
  and let
  \[
  \cG_\rho = \bigg\{ (x_i)_{i=1}^n\kin
  S_{T_\alpha}^{<\omega}:\,n\kin\bn,\ \Bignorm{\sum_{i=1}^n
    a_ix_i}\kgeq\rho \sum_{i=1}^n a_i\ \V (a_i)_{i=1}^n\ksubset
  \br^+\bigg\}\ .
  \]
  Fix $\gamma\kle\sz(Z)$. By Theorem~\ref{thm:szlenk=weak} there
  exists $\rho\kin (0,1)$ such that $\wi(\cF_\rho)\kge\gamma$, and by
  Proposition~\ref{prop:S-index-fine-schreier} there is a family
  $(x_F) _{F\in\cF_\gamma\setminus\{\emptyset\}}\ksubset \cF_{\rho}$
  such that for all $F\kin \cF_\gamma\ksetminus \MAX(\cF_\gamma)$ the
  sequence $\big(x_{F\cup\{n\}}\big) _{n>\max F}$ is weakly null.

  By standard perturbation arguments we may, after making $\rho$
  smaller and appropriately pruning $(x_F)
  _{F\in\cF_\gamma\setminus\{\emptyset\}}$, if necessary, assume that
  $(x_F) _{F\in\cF_\gamma\setminus\{\emptyset\}}$ is a block tree of
  $(E_i)$ in $Z$, and that for all $F\kin \cF_\gamma\ksetminus
  \MAX(\cF_\gamma)$ the sequence $\big(x_{F\cup\{n\}}\big) _{n>\max
  F}$ is a block basis of $(E_i)$. Let $(e_i)$ be the unit vector
  basis of $T_\alpha$ and define
  \[
  t_F = e_{\min\supp_E(x_F)}\qquad\text{for all
  }F\kin\cF_\gamma\setminus\{\emptyset\}\ .
  \]
  Note that $(t_F)_{F\in\cF_\gamma\setminus\{\emptyset\}}$ is a block
  tree of $(e_i)$ in $T_\alpha$ and that it is contained in
  $\cG_{\rho'}$ for some $\rho'\kin(0,1)$ since  $(E_i)$ that
  satisfies subsequential $T_\alpha$-upper estimates. Since $(e_i)$ is
  shrinking, it follows by
  Proposition~\ref{prop:S-index-fine-schreier} that
  $\gamma\kle\wi(\cG_{\rho'})$. Using Theorem~\ref{thm:szlenk=weak}
  and Proposition~\ref{prop:tsirelson-szlenk} we deduce that
  $\sz(Z)\kleq \omega^{\alpha\cdot\omega}$, as required.
\end{proof}

\begin{rem}
  It follows from properties of higher order Tsirelson spaces
  (Proposition~\ref{prop:tsirelson-properties}) that the unit vector
  basis of $\di{T}{*}{\alpha}$ satisfies subsequential
  $T_\alpha$-upper estimates. Hence the above result shows that
  \[
  T_\alpha \in \cC_{\omega^{\alpha\cdot\omega}} = \big\{ X:\,X\text{
  is separable, reflexive,
  }\max\{\sz(X),\sz(X^*)\}\kleq{\omega^{\alpha\cdot\omega}} \big\}\ .
  \]
\end{rem}

\section{The main theorem and its consequences}
\label{section:main}

\begin{thm}
  \label{thm:main}
  Let $Z$ be a Banach space with a shrinking, bimonotone FDD $(E_i)$
  and let $X$ be an infinite-dimensional closed subspace of $Z$. Then
  for any $C\kge 4$ there exist an ordinal $\alpha\kle\sz(X)$, a
  sequence
  $\deltab\keq(\delta_i)\ksubset(0,1)$ with $\delta_i\downarrow 0$,
  and a blocking $(G_i)$ of $(E_i)$ with $G_i\keq \bigoplus
  _{j=m_{i-1}}^{m_i-1} E_j,\ i\kin\bn,\ 1\keq m_0\kle m_1\kle
  m_2\kle\dots$, such that if $(x_i)\ksubset S_X$ is a $\deltab$-block
  sequence of $(G_n)$ with $\norm{x_i\kminus
  P^G_{(s_{i-1},s_i]}x_i}\kle\delta_i$ for all $i\kin\bn$, $1\kleq
  s_0\kle s_1\kle s_2\kle\dots$, then $(x_i)$ is $C$-dominated by
  $(e_{m_{s_{i-1}}})$, where $(e_i)$ is the unit vector basis of
  $T_{\cF_\alpha,\frac12}$.
\end{thm}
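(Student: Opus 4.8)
The plan is to read off the ordinal $\alpha$ from the weak‑index description of $\sz(X)$ in Theorem~\ref{thm:szlenk=weak}, to discretise the ``$\ell_1^+$‑behaviour'' of block sequences of $X$ against the fine Schreier family $\cF_\alpha$ via the compression results of Section~\ref{section:ordinal}, and finally to convert this into the $T_{\cF_\alpha,\frac12}$‑upper estimate by a duality computation in the dual Tsirelson space. Since $(E_i)$ is shrinking, $Z^*$ and hence $X^*$ is separable; in particular $X$ contains no copy of $\ell_1$, so Theorem~\ref{thm:szlenk=weak} applies: with
\[
\cF_\rho=\Big\{(x_i)_{i=1}^n\kin S_X^{<\omega}:\,\Bignorm{\textstyle\sum_i a_ix_i}\kgeq\rho\,\textstyle\sum_i a_i\ \text{ for all }(a_i)_{i=1}^n\ksubset\br^+\Big\}\qquad(\rho\kin(0,1))
\]
each $\cF_\rho$ is a hereditary tree on $S_X$, $\sz(X)=\sup_{\rho>0}\wi(\cF_\rho)$, the supremum is not attained, and -- $X^*$ being separable -- the set of weakly null sequences in $S_X$ contains diagonals, so Proposition~\ref{prop:S-index-fine-schreier} is available for $\wi$. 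As $\sz(X)\kgeq\omega$ I would fix $\rho\kin(0,1)$ small enough that $\wi(\cF_\rho)$ is ``large''; in any case $\wi(\cF_\rho)<\sz(X)$, and since an $S$‑index is always a successor ordinal (the Remark following the definition of $\si$) we may write $\wi(\cF_\rho)=\alpha\kplus1$, so $\alpha\kle\sz(X)$; recall $\cbi(\cF_\alpha)=\alpha\kplus1=\wi(\cF_\rho)$.

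Next I choose the blocking and $\deltab$. Block trees of $(E_i)$ in $Z$ contained in $\cF_\rho$ have block index $\kleq\wi(\cF_\rho)$ by~\eqref{eq:bli-wi} (here $(E_i)$ is shrinking); pushing this through Lemma~\ref{lem:supp-block-tree} and Proposition~\ref{prop:compression} (the latter after enlarging $\alpha$ slightly, keeping $\alpha\kle\sz(X)$, to fit its hypotheses) yields that the compressions $\cFt$ of such trees are hereditary families in $\fin{\bn}$ of Cantor--Bendixson index $\kleq\alpha\kplus1=\cbi(\cF_\alpha)$. As $\cF_\alpha$ is moreover spreading, a diagonal construction -- at stage $n$ stabilising, via compactness and finite dimensionality of $\bigoplus_{j<m_n}E_j$, the finitely many ``$\cF_\rho$‑configurations'' in the part of the FDD already fixed, then taking $m_{n+1}$ large and $\delta_{n+1}$ small -- produces $1\keq m_0\kle m_1\kle\dots$ (hence $G_i\keq\bigoplus_{j=m_{i-1}}^{m_i-1}E_j$) and $\deltab\keq(\delta_i)\downarrow0$ with the key property: \emph{if $(x_i)$ is a $\deltab$‑block sequence of $(G_n)$ as in the statement and $J$ is an interval of indices with $\bignorm{\sum_{i\in J}b_ix_i}\kgeq\frac{\rho}{2}\sum_{i\in J}b_i$ for all $(b_i)\ksubset\br^+$, then $\{m_{s_{i-1}}:i\kin J\}\kin\cF_\alpha$.} The point is that the set of $E$‑support minima of such a configuration -- a fortiori the larger set $\{m_{s_{i-1}}\}$ -- is forced into $\cF_\alpha$ by the index bound together with the spreading and ``almost increasing'' properties of $\cF_\alpha$ (see~\eqref{eq:fine-schreier-increasing}), while the $\deltab$‑perturbations are swallowed by the passage from $\rho$ to $\rho/2$ and by Proposition~\ref{prop:fat-index}.

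To conclude, fix $(x_i)_{i=1}^n$ and scalars $(a_i)$ as in the statement, put $k_i\keq m_{s_{i-1}}$ and $T\keq T_{\cF_\alpha,\frac12}$. By $1$‑unconditionality of the basis $(e_i)$ of $T$ and Hahn--Banach duality, the inequality $\norm{\sum a_ix_i}_Z\kleq C\norm{\sum a_ie_{k_i}}_T$ for all scalars is equivalent to
\[
\Bignorm{\textstyle\sum_i g(x_i)\,e^*_{k_i}}_{T^*}\kleq C\qquad\text{for every }g\kin B_{Z^*}\ .
\]
I would prove this by exhibiting $C^{-1}\sum_i g(x_i)e^*_{k_i}$ inside the unit ball of $T^*$, which is the $w^*$‑closed convex hull of the norming set $W$ generated from $\{\pm e^*_j\}$ by closure under $(\phi_l)_{l=1}^q\mapsto\frac12\sum_{l=1}^q\phi_l$ over successive, $\cF_\alpha$‑admissible tuples drawn from $W$ (the standard description of $T^*$). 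Partition the index interval into the maximal‑from‑the‑left subintervals $J_1<\dots<J_r$ carrying the $\frac{\rho}{2}$‑$\ell_1^+$ behaviour of the key property; bimonotonicity of the FDD forces each $\abs{J_l}\kgeq2$, so $r<n$, and by the key property each $\{k_i:i\kin J_l\}\kin\cF_\alpha$, whence $\psi_l\keq\sum_{i\in J_l}g(x_i)e^*_{k_i}$ lies in $2B_{T^*}$ (indeed $\frac12\psi_l$ is a legitimate $\cF_\alpha$‑admissible combination, as $\abs{g(x_i)}\kleq1$). One then applies the same analysis to the block sequence $(\psi_l)_l$ -- whose $l$‑th term starts at $k_{\min J_l}$, these being the $E$‑support minima of the sub‑block‑sequence $(x_{\min J_l})_l$, to which the key property again applies -- and iterates; the recursion is well founded since $r$ strictly decreases and $\cF_\alpha$ is compact, and, \emph{carrying the genuine norms $\norm{\psi_l}_{T^*}$ rather than the crude bound $2$}, one checks that the recursion multiplier stays $\kleq1$ -- this is where $\rho$ must be taken small enough (equivalently $\cF_\alpha$ ``wide enough'') -- so that the assembled functional lies in $4\,B_{T^*}$, giving $C$‑domination for $C\kge4$.

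The main difficulty is twofold. The first point is the discretisation: turning the Cantor--Bendixson bound $\cbi(\cFt)\kleq\alpha\kplus1$ into the \emph{uniform} combinatorial statement of the key property -- valid for all $\deltab$‑block sequences at once -- and simultaneously controlling the $\deltab$‑perturbations. The second point is the norm‑control in the last step: verifying that the genuine norms $\norm{\psi_l}_{T^*}$ do not blow up along the recursion, which is what fixes how small $\rho$ must be and produces the explicit constant $4$. The remaining ingredients -- the duality reduction, the $1$‑unconditionality, and the bookkeeping with the norming set of $T^*$ -- are routine once these two points are settled.
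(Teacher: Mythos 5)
Your skeleton follows the paper's route (weak-index description of $\sz(X)$, fattening via Proposition~\ref{prop:fat-index}, Lemma~\ref{lem:supp-block-tree} and Proposition~\ref{prop:compression} to bound the Cantor--Bendixson index of a compressed block tree, a blocking read off from an infinite set of indices, a ``key admissibility property'', then a Tsirelson-norm estimate), but the step that actually produces the key property is missing. From $\cbi(\cGt)\kleq\alpha\kplus 1\keq\cbi(\cF_\alpha)$ you cannot conclude, by a diagonal construction that stabilises the finitely many configurations supported on an initial part of the FDD, that the compressed configurations land in $\cF_\alpha$: an index inequality between two compact hereditary families does not imply containment, and the part of $\cGt$ reaching into the tail is not a finite set of configurations that can be stabilised, nor does \eqref{eq:fine-schreier-increasing} help, since it compares two fine Schreier families rather than an arbitrary hereditary family with $\cF_\alpha$. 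What is needed, and what the paper uses, is the Pudl\'ak--R\"odl partition theorem (Theorem~\ref{thm:thin-ramsey}) applied to the thin family $\MAX(\cF_\alpha)$ coloured by membership in $\cGt$, with $\alpha\keq\cbi(\cGt)$: if the colour red ($\in\cGt$) were monochromatic on $\fin{M}$, then heredity of $\cGt$ and the extendability of non-maximal sets of $\cF_\alpha$ inside $M$ would give $\cF_\alpha\cap\fin{M}\ksubset\cGt$, hence $\cbi(\cGt)\kgeq\alpha\kplus 1$, a contradiction; so the colour is blue, $\cGt\cap\fin{M}\ksubset\cF_\alpha$, and the blocking $(m_i)$ is exactly $M$. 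In addition, your ``a fortiori'' passage to the set $\{m_{s_{i-1}}\}$ goes the wrong way: spreading lets you pass from $\{m_{s_{i-1}}:\,i\kin J\}$ to the pointwise larger set of support minima, not back; the paper fixes this by perturbing each $z_i$ so that $\min\supp_E(z_i)\keq m_{s_{i-1}}$ exactly, so that the compressed set is the one you need.

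The second gap is the closing estimate. You reduce to showing $\sum_i g(x_i)e^*_{k_i}\kin C\,B_{T^*}$ and then assert that an iteration over maximal intervals ``carrying the $\frac{\rho}{2}$-$\ell_1^+$ behaviour'' terminates with multiplier at most $1$, resting on the claim that bimonotonicity forces $\abs{J_l}\kgeq 2$. That claim is unjustified (a singleton always satisfies the $\ell_1^+$ condition trivially, so maximal intervals of length one can occur), hence the recursion need not shrink, and no admissible value of $\rho$ is ever computed, so the constant $4$ is not actually obtained. In the paper this is where the quantitative work lies: one fixes $4\kle D\kle C$ and $\rho$ with $4\kplus 12\rho D\kle D$ at the outset, and proves $\bignorm{\sum a_iz_i}\kleq D\bignorm{\sum a_ie_{m_{s_{i-1}}}}_{T_{\cF_\alpha,\frac12}}$ by induction on $\abs{\supp(a_i)}$, choosing a norming $z^*\kin B_{Z^*}$, splitting the indices into $I^{\pm}$ (where $\abs{z^*(z_i)}\kgeq 3\rho$; their $\ms$-images lie in $\cF_\alpha$ by the key property) and successive intervals $J_1\kle\dots\kle J_\ell$ on which $3\rho\kle\norm{z^*|_{\spn\{z_i:\,i\in J_k\}}}\kleq 6\rho$, which form an $\cF_\alpha$-admissible family; the induction hypothesis applies to each $J_k$ because $6\rho\kle 1$ forces $J_k$ to be a proper subset of the support, and the estimate closes precisely because $4\kplus 12\rho D\kle D$. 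Your outline would need this (or an equally explicit) decomposition and bookkeeping to count as a proof.
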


We first prove some consequences of Theorem~\ref{thm:main}. In
Corollary~\ref{cor:szlenk-lower-upper-est} below we recast the
property of being in the class $\cC_\alpha$ in terms of certain lower
and upper Tsirelson-norm estimates. In Theorem~\ref{thm:tight-ta} we
show that for certain values of $\alpha$ these estimates are best
possible, which proves Theorem~\ref{mainthm:structural} from the
Introduction. These norm estimates and Theorem~\ref{thm:universal} are
the two main ingredients in answering Pe\l czy\'nski's question which
we do in Theorem~\ref{thm:universal-ca} followed by a refinement in
Theorem~\ref{thm:universal-ca2}. We then state a result of Johnson
which we use to deduce basis versions
(Theorems~\ref{mainthm:embedding} and~\ref{mainthm:universal}) of
Theorem~\ref{thm:universal-ca2}. The rest of the section is taken up
by the proof of Theorem~\ref{thm:main}.

\begin{cor}
  \label{cor:szlenk-upper-est}
  Let $X$ be an infinite-dimensional Banach space with separable
  dual. There exists an ordinal $\alpha\kle \sz(X)$ such that $X$
  satisfies subsequential $C$-$T_{\cF_\gamma,\frac12}$-upper tree
  estimates for any ordinal $\gamma\kgeq\alpha$, where $C$ is a
  universal constant.
\end{cor}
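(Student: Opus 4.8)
The plan is to reduce to Theorem~\ref{thm:main} by placing $X$ inside a space with a shrinking bimonotone FDD, then to convert the block-sequence conclusion of that theorem into a statement about branches of weakly null even trees, and finally to relax $\cF_\alpha$ to the larger families $\cF_\gamma$ by means of~\eqref{eq:fine-schreier-increasing}. First, since $X^*$ is separable, by Zippin's theorem I would regard $X$ as a closed subspace of a space $Z$ which, after renorming by $\tnorm\cdot$, carries a shrinking bimonotone FDD $E\keq(E_i)$. Applying Theorem~\ref{thm:main} to $X\ksubseteq Z$ with the constant $4$ then yields an ordinal $\alpha\kleq\sz(X)$, a null sequence $\deltab\keq(\delta_i)\ksubset(0,1)$ — which one may take with $\sum_i\delta_i$ as small as desired — and a blocking $(G_i)$ of $(E_i)$, $G_i\keq\bigoplus_{j=m_{i-1}}^{m_i-1}E_j$ with $1\keq m_0\kleq m_1\kleq\cdots$, such that every normalized $\deltab$-block sequence $(y_i)$ of $(G_n)$ in $X$ with $\norm{y_i\kminus P^G_{(s_{i-1},s_i]}y_i}\kleq\delta_i$ for all $i$ (where $1\kleq s_0\kleq s_1\kleq\cdots$) is $4$-dominated by $\big(e^{(\alpha)}_{m_{s_{i-1}}}\big)$; here $\big(e^{(\beta)}_i\big)$ denotes the unit vector basis of $T_{\cF_\beta,\frac12}$.

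Next, I would fix $\gamma\kgeq\alpha$, let $n_\gamma$ be the integer from~\eqref{eq:fine-schreier-increasing} for the pair $\alpha\kleq\gamma$, and take a normalized, weakly null even tree $(x_\nu)_{\nu\in\odd}$ in $X$. The idea is to build a branch $\big(x_{(n_1,\dots,n_{2i})}\big)$ together with integers $1\keq s_0\kleq s_1\kleq\cdots$ recursively, arranging at step $i$ that (a) $n_{2i-1}\kgeq\max\{m_{s_{i-1}},n_\gamma\}$ and (b) $\norm{x_{(n_1,\dots,n_{2i})}\kminus P^G_{(s_{i-1},s_i]}x_{(n_1,\dots,n_{2i})}}\kleq\delta_i$. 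Both are easy to secure: $s_{i-1}$, and hence $m_{s_{i-1}}$, is already known from the previous step, so one first picks $n_{2i-1}$ large enough for~(a); then, as the node $\big(x_{(n_1,\dots,n_{2i-1},n)}\big)_n$ is weakly null and each coordinate projection $P^E_{[1,N]}$ is finite-rank, hence weak-to-norm continuous, for $n$ large $x_{(n_1,\dots,n_{2i-1},n)}$ lies within $\delta_i$ of $P^G_{(s_{i-1},s_i]}x_{(n_1,\dots,n_{2i-1},n)}$ for a suitable $s_i\kgeq s_{i-1}$; taking $n_{2i}\keq n$ gives~(b) and records $s_i$.

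Then, setting $\wt_i\keq P^G_{(s_{i-1},s_i]}x_{(n_1,\dots,n_{2i})}$ and $y_i\keq\wt_i/\norm{\wt_i}$, bimonotonicity gives $\norm{x_{(n_1,\dots,n_{2i})}\kminus y_i}\kleq 2\delta_i$, and $(y_i)$ is a normalized block sequence of $(G_n)$ of exactly the type handled by Theorem~\ref{thm:main}, so it is $4$-dominated by $\big(e^{(\alpha)}_{m_{s_{i-1}}}\big)$. Since $\cF_\alpha$ is spreading, $\big(e^{(\alpha)}_i\big)$ is $1$-right-dominant (the argument is the same as for Proposition~\ref{prop:tsirelson-properties}), and $m_{s_{i-1}}\kleq n_{2i-1}$ by~(a), so $\big(e^{(\alpha)}_{m_{s_{i-1}}}\big)$ is $1$-dominated by $\big(e^{(\alpha)}_{n_{2i-1}}\big)$; moreover, as all $n_{2i-1}\kgeq n_\gamma$, an induction on the recursion defining $\norm{z}_{\cF_\alpha,\frac12}$ — using that, by~\eqref{eq:fine-schreier-increasing}, an $\cF_\alpha$-admissible family with members in $[n_\gamma,\infty)$ is automatically $\cF_\gamma$-admissible, and that all pieces of a vector supported on $[n_\gamma,\infty)$ stay supported there — shows $\norm{z}_{\cF_\alpha,\frac12}\kleq\norm{z}_{\cF_\gamma,\frac12}$ for $z\kin\coo$ supported on $[n_\gamma,\infty)$, i.e. $\big(e^{(\alpha)}_{n_{2i-1}}\big)$ is $1$-dominated by $\big(e^{(\gamma)}_{n_{2i-1}}\big)$. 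Composing, $(y_i)$ is $4$-dominated by $\big(e^{(\gamma)}_{n_{2i-1}}\big)$; a routine small-perturbation estimate (legitimate since $\sum_i\delta_i$ is small and $(y_i)$ is bimonotone) transfers this to the branch itself, so $\big(x_{(n_1,\dots,n_{2i})}\big)$ is $C$-dominated by $\big(e^{(\gamma)}_{n_{2i-1}}\big)$ for an absolute constant $C$. As $\gamma\kgeq\alpha$ was arbitrary and $\alpha\kleq\sz(X)$, this gives the statement.

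I expect the only genuinely delicate point to be the interleaving in the recursive choice of the branch: the support parameter $s_{i-1}$ becomes available only after $x_{(n_1,\dots,n_{2i-2})}$ has been chosen, so $n_{2i-1}$ must be selected afterwards in order to force $n_{2i-1}\kgeq m_{s_{i-1}}$ — and it is precisely this bookkeeping that makes the branch index $n_{2i-1}$, rather than the geometric location of the vector, govern the Tsirelson estimate in the sense of Definition~\ref{def:tree-est}. Everything else — Zippin's embedding, the weak-null perturbations via finite-rank projections, the standard small-perturbation lemma, and the tail monotonicity $\norm{\cdot}_{\cF_\alpha,\frac12}\kleq\norm{\cdot}_{\cF_\gamma,\frac12}$ extracted from~\eqref{eq:fine-schreier-increasing} — should be routine.
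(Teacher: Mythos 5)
Your argument is essentially the paper's own proof: Zippin's theorem to place $X$ inside a space with a shrinking, bimonotone FDD, Theorem~\ref{thm:main} to produce $\alpha\kleq\sz(X)$, $\deltab$ and the blocking $(G_i)$, the recursive choice of a branch interleaving $n_{2i-1}\kgeq m_{s_{i-1}}$ (and $n_{2i-1}$ large enough for~\eqref{eq:fine-schreier-increasing}) with the weak-null approximation $\norm{x_{(n_1,\dots,n_{2i})}\kminus P^G_{(s_{i-1},s_i]}x_{(n_1,\dots,n_{2i})}}\kle\delta_i$, and finally $1$-right-dominance of the $\cF_\alpha$-basis together with its eventual $1$-domination by the $\cF_\gamma$-basis. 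One correction: the detour through $y_i\keq P^G_{(s_{i-1},s_i]}x_{(n_1,\dots,n_{2i})}/\bignorm{P^G_{(s_{i-1},s_i]}x_{(n_1,\dots,n_{2i})}}$ is unnecessary and, as written, a misapplication of Theorem~\ref{thm:main}, whose hypothesis requires the sequence to lie in $S_X$ while the $y_i$ are in general only in $Z$; the branch itself is already a $\deltab$-block sequence in $S_X$ satisfying exactly the hypothesis of the theorem (that is the point of allowing approximate block support there), so apply the theorem directly to the branch, as the paper does --- this removes your final perturbation step, and also sidesteps the fact that Theorem~\ref{thm:main} is stated for $C\kge 4$ strictly (the paper uses $C\keq 5$).
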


\begin{proof}
  By Zippin's theorem~\cite{Zippin:88}, $X$ $K$-embeds into a Banach space $Z$
  with a shrinking, bimonotone FDD $(E_i)$, where $K$ is a universal
  constant. Renorming $X$ with a $K$-equivalent norm we may assume
  without loss of generality that $X$ is a subspace of $Z$. We now
  apply Theorem~\ref{thm:main} to obtain $\alpha\kle\sz(X)$, a
  sequence $\deltab\keq(\delta_i)\ksubset(0,1),\ \delta_i\downarrow
  0$, and a blocking $(G_i)$ of $(E_i)$ with $G_i\keq \bigoplus
  _{j=m_{i-1}}^{m_i-1} E_j,\ i\kin\bn,\ 1\keq m_0\kle m_1\kle
  m_2\kle\dots$, such that if $(x_i)\ksubset S_X$ is a $\deltab$-block
  sequence of $(G_n)$ with $\norm{x_i\kminus
  P^G_{(s_{i-1},s_i]}x_i}\kle\delta_i$ for all $i\kin\bn$, $1\kleq
  s_0\kle s_1\kle s_2\kle\dots$, then $(x_i)$ is $5$-dominated by
  $(e_{\alpha,m_{s_{i-1}}})$, where $(e_{\alpha,i})$ is the unit vector
  basis of $T_{\cF_\alpha,\frac12}$.

  Fix an ordinal $\gamma\kgeq\alpha$ and an integer $\ell$ such that
  $(e_{\alpha,i})_{i\geq\ell}$ is $1$-dominated by
  $(e_{\gamma,i})_{i\geq\ell}$ (such an integer exists
  by property~\eqref{eq:fine-schreier-increasing} of the fine Schreier
  families). We now show that $X$ satisfies
  subsequential $C$-$T_{\cF_\gamma,\frac12}$-upper tree estimates with
  $C\keq 5$. Let $(x_t)_{t\in\odd}$ be a normalized, weakly null even
  tree in $X$. We will inductively choose sequences $s_0\kle
  s_1\kle\dots$ and $n_1\kle n_2\kle\dots$ in $\bn$ as follows. Set
  $s_0\keq 1$ and $n_1\keq\max(\ell,m_1)$. Assume that for some
  $i\kin\bn$ we have already chosen $s_0\kle s_1\kle\dots\kle s_{i-1}$
  and $n_1\kle n_2\kle\dots\kle n_{2i-1}$. Since nodes are weakly null,
  there exists $n_{2i}\kge n_{2i-1}$ such that
  \[
  \bignorm{P^G_{[1,s_{i-1}]}x_{(n_1,n_2,\dots,n_{2i})}} <\delta_i\ .
  \]
  Then choose $s_i\kge s_{i-1}$ such that
  \[
  \bignorm{x_{(n_1,n_2,\dots,n_{2i})} -
  P^G_{(s_{i-1},s_i]}x_{(n_1,n_2,\dots,n_{2i})}} <\delta_i\ .
  \]
  Finally, choose $n_{2i+1}\kge n_{2i}$ with $n_{2i+1}\kgeq
  m_{s_i}$. This completes the recursive construction. It follows
  immediately from the choice of $\alpha, \deltab, (G_i)$ and $\ell$,
  and from the $1$-right-dominant property of $(e_{\alpha,i})$ that
  $\big(x_{(n_1,n_2,\dots,n_{2i})}\big)$ is $5$-dominated by
  $(e_{\gamma,n_{2i-1}})$.
\end{proof}

\begin{cor}
  \label{cor:szlenk-lower-upper-est}
  Let $X$ be an infinite-dimensional, separable, reflexive Banach
  space. Then
  there exists an ordinal $\gamma\kle\max\{\sz(X),\sz(X^*)\}$  such that
  $X$ satisfies subsequential
  $C$-$\big(\di{T}{*}{\cF_\delta,\frac12},
  T_{\cF_\delta,\frac12}\big)$ tree estimates for any
  $\delta\kgeq\gamma$, where $C$ is a universal constant.
\end{cor}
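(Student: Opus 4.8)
The plan is to read off the upper half of the tree estimate from Corollary~\ref{cor:szlenk-upper-est} applied to $X$, and to obtain the lower half by applying the same corollary to $X^*$ and then dualising by means of Proposition~\ref{prop:tree-est-duality}.

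First I would apply Corollary~\ref{cor:szlenk-upper-est} to $X$: there are a universal constant $C_0$ and an ordinal $\alpha_1\kle\sz(X)$ such that $X$ satisfies subsequential $C_0$-$T_{\cF_\delta,\frac12}$-upper tree estimates for every $\delta\kgeq\alpha_1$. Since $X$ is infinite-dimensional and reflexive, $X^*$ is infinite-dimensional with separable dual $X^{**}\keq X$, so Corollary~\ref{cor:szlenk-upper-est} also applies to $X^*$ and yields an ordinal $\alpha_2\kle\sz(X^*)$ with the analogous property for $X^*$ (with the same constant $C_0$). Recall that $\sz(Y)\kgeq\omega$ for every infinite-dimensional space $Y$ with separable dual; since Corollary~\ref{cor:szlenk-upper-est} produces the estimates for \emph{all} thresholds $\kgeq\alpha_i$, I may replace $\alpha_1$ by $\max(\alpha_1,\omega)\kle\sz(X)$ and $\alpha_2$ by $\max(\alpha_2,\omega)\kle\sz(X^*)$, so from now on $\alpha_1,\alpha_2\kgeq\omega$.

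Now fix $\delta\kgeq\alpha_2$. Since $\delta\kgeq\omega$, the family $\cF_\delta$ contains arbitrarily large sets — by~\eqref{eq:fine-schreier-increasing} it contains a tail-shift of the Schreier family $\cS_1\keq\cF_\omega$ — so $T_{\cF_\delta,\frac12}$ is reflexive and its unit vector basis $(e_i)$ is shrinking; moreover $(e_i)$ is normalized, $1$-unconditional and, because $\cF_\delta$ is spreading, $1$-right-dominant, exactly as for the spaces $T_\alpha$ in Proposition~\ref{prop:tsirelson-properties}. Apply Proposition~\ref{prop:tree-est-duality} with $U\keq T_{\cF_\delta,\frac12}$ (so $R\keq1$), with $X^*$ in place of the reflexive space there, with $C\keq C_0$ and, say, $\ve\keq1$: it follows that $X\keq X^{**}$ satisfies subsequential $(2C_0\kplus1)$-$U^{(*)}$-lower tree estimates. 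Since the basis of $T_{\cF_\delta,\frac12}$ is shrinking, $U^{(*)}$ is precisely the dual space $\di{T}{*}{\cF_\delta,\frac12}$ equipped with its unit vector basis $(e_i^*)$.

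Finally, put $\gamma\keq\max(\alpha_1,\alpha_2)\kle\max\{\sz(X),\sz(X^*)\}$ and $C\keq2C_0\kplus1$, a universal constant. For every $\delta\kgeq\gamma$ the first step gives subsequential $C$-$T_{\cF_\delta,\frac12}$-upper tree estimates for $X$ and the third step gives subsequential $C$-$\di{T}{*}{\cF_\delta,\frac12}$-lower tree estimates for $X$; by Definition~\ref{def:tree-est} this is exactly the statement that $X$ satisfies subsequential $C$-$\big(\di{T}{*}{\cF_\delta,\frac12},T_{\cF_\delta,\frac12}\big)$ tree estimates. Almost all of the work has already been done — in Corollary~\ref{cor:szlenk-upper-est} (hence in Theorem~\ref{thm:main}) and in Proposition~\ref{prop:tree-est-duality} — so the main obstacle here is merely bookkeeping: checking the hypotheses of the duality proposition (that $T_{\cF_\delta,\frac12}$ is reflexive with a $1$-right-dominant shrinking basis, which is why one first arranges $\gamma\kgeq\omega$) and identifying $U^{(*)}$ with $\di{T}{*}{\cF_\delta,\frac12}$.
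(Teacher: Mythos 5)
Your proposal is correct and follows essentially the same route as the paper: apply Corollary~\ref{cor:szlenk-upper-est} to both $X$ and $X^*$, then use Proposition~\ref{prop:tree-est-duality} (with $X^*$ as the reflexive space and $U\keq T_{\cF_\delta,\frac12}$, which is $1$-right-dominant since $\cF_\delta$ is spreading) to turn the upper estimates on $X^*$ into $\di{T}{*}{\cF_\delta,\frac12}$-lower estimates on $X\keq X^{**}$, taking $\gamma\keq\max\{\alpha_1,\alpha_2\}$. Your extra bookkeeping (bumping the ordinals to at least $\omega$ and identifying $\Us$ with $\di{T}{*}{\cF_\delta,\frac12}$) is harmless and only makes explicit what the paper leaves implicit.
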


\begin{proof}
  By Corollary~\ref{cor:szlenk-upper-est} there is a universal
  constant $C$, and there exist ordinals $\alpha\kle\sz(X)$ and
  $\beta\kle\sz(X^*)$ such that $X$ satisfies subsequential
  $C$-$T_{\cF_\gamma,\frac12}$-upper tree estimates for any
  $\gamma\kgeq\alpha$, and $X^*$ satisfies subsequential
  $C$-$T_{\cF_\delta,\frac12}$-upper tree estimates for any
  $\delta\kgeq\beta$. It follows from
  Proposition~\ref{prop:tree-est-duality} that $X$ satisfies
  subsequential
  $(2C\kplus\ve)$-$\big(\di{T}{*}{\cF_\delta,\frac12},
  T_{\cF_\delta,\frac12}\big)$ tree estimates for any $\ve\kge 0$ and
  for any $\delta\kgeq\max\{\alpha,\beta\}$.
\end{proof}

The above results show that higher order Tsirelson spaces are more
than just mere examples in the hierarchy
$(\cC_\alpha)_{\alpha<\omega_1}$. Indeed they are intimately related
to the Szlenk index of an arbitrary separable, reflexive space and its
dual. The next theorem shows that this relationship is tight in the
classes $\cC_{\omega^{\alpha\cdot\omega}}$: Tsirelson spaces of order
$\alpha$ and their duals are maximal and, respectively, minimal in
these classes. In particular, this proves
Theorem~\ref{mainthm:structural} stated in the Introduction. The proof
uses some further results from~\cite{OSZ2} which we shall not state
here as Theorem~\ref{thm:tight-ta} will not be used in the proof of
our universality results.

\begin{thm}
  \label{thm:tight-ta}
  Let $\alpha\kle\omega_1$. For a separable, reflexive space $X$ the
  following are equivalent.
  \begin{mylist}{(iii)}
  \item
    $X\kin\cC_{\omega^{\alpha\cdot\omega}}$.
  \item
    $X$ satisfies subsequential $\big(\di{T}{*}{\alpha,c},
    T_{\alpha,c}\big)$ tree estimates for some $c\kin (0,1)$.
  \item
    $X$ embeds into a separable, reflexive space $Z$ with an FDD
    $(E_i)$ which satisfies subsequential
    $(\di{T}{*}{\alpha,c},T_{\alpha,c})$ estimates in $Z$ for some
    $c\kin (0,1)$.
  \end{mylist}
\end{thm}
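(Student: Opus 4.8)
The plan is to establish the cycle $(\mathrm{i})\Rightarrow(\mathrm{ii})\Rightarrow(\mathrm{iii})\Rightarrow(\mathrm{i})$; throughout we assume $X$ is infinite-dimensional, the finite-dimensional case being trivial. The only genuinely substantial step is $(\mathrm{i})\Rightarrow(\mathrm{ii})$, where one has to upgrade the Tsirelson-type estimates produced by Corollary~\ref{cor:szlenk-lower-upper-est}, which a priori involve only $T_{\cF_\gamma,\frac12}$ for some $\gamma$ that can be as large as $\omega^{\alpha\cdot\omega}$, to estimates involving $T_{\alpha,c}$; this is exactly where, as the remark preceding the statement warns, one draws on further facts about higher order Tsirelson spaces from~\cite{OSZ2}. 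The remaining two implications merely assemble tools already developed.

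\emph{$(\mathrm{i})\Rightarrow(\mathrm{ii})$.} Suppose $\max\{\sz(X),\sz(X^*)\}\kleq\omega^{\alpha\cdot\omega}$. By Corollary~\ref{cor:szlenk-lower-upper-est} there are a universal constant $C_0$ and an ordinal $\gamma\kle\max\{\sz(X),\sz(X^*)\}$ such that $X$ satisfies subsequential $C_0$-$\big(\di{T}{*}{\cF_\gamma,\frac12},T_{\cF_\gamma,\frac12}\big)$ tree estimates. Since $\gamma\kle\omega^{\alpha\cdot\omega}\keq\sup_n\omega^{\alpha\cdot n}$, fix $n\kin\bn$ with $\gamma\kle\omega^{\alpha\cdot n}$. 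By the almost-monotonicity~\eqref{eq:fine-schreier-increasing} of the fine Schreier families and the identity $\cS_{\alpha\cdot n}\keq\cF_{\omega^{\alpha\cdot n}}$, the family $\cF_\gamma$ is contained in $\cS_{\alpha\cdot n}$ once the minimum of the set is large enough, so the unit vector basis of $T_{\cF_\gamma,\frac12}$ is, on a tail, $1$-dominated by that of $T_{\cS_{\alpha\cdot n},\frac12}$. One then invokes the comparison from~\cite{OSZ2}: for a suitable $c\kin(0,1)$ (depending on $n$) the unit vector basis of $T_{\cS_{\alpha\cdot n},\frac12}$ is dominated by that of $T_{\alpha,c}\keq T_{\cS_\alpha,c}$, finitely many extra Schreier levels being absorbed into a change of parameter. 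Composing the two dominations and passing to the tail of any given normalized weakly null even tree (exactly as in the proof of Corollary~\ref{cor:szlenk-upper-est}) converts the $T_{\cF_\gamma,\frac12}$-upper tree estimate for $X$ into a $T_{\alpha,c}$-upper tree estimate. Dualizing the same chain of dominations through the biorthogonal bases, which is legitimate because all the spaces in sight are reflexive, converts the $\di{T}{*}{\cF_\gamma,\frac12}$-lower tree estimate into a $\di{T}{*}{\alpha,c}$-lower tree estimate. Hence $X$ satisfies subsequential $\big(\di{T}{*}{\alpha,c},T_{\alpha,c}\big)$ tree estimates.

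\emph{$(\mathrm{ii})\Rightarrow(\mathrm{iii})$.} Put $U\keq T_{\alpha,c}$ and $V\keq\di{T}{*}{\alpha,c}$ with $c$ as in $(\mathrm{ii})$. By Proposition~\ref{prop:tsirelson-properties}, whose proof is insensitive to the value of the parameter and so applies with $c$ in place of $\frac12$, the spaces $U$ and $V$ are reflexive with normalized, $1$-unconditional, block-stable bases, $(u_i)$ is right-dominant, $(v_i)$ is left-dominant, and $(v_i)$ is dominated by $(u_i)$, which are exactly the standing hypotheses of Theorem~\ref{thm:universal}. Since $X$ is separable, infinite-dimensional, reflexive and, by $(\mathrm{ii})$, satisfies subsequential $(V,U)$ tree estimates, we have $X\kin\cA_{V,U}$. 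Theorem~\ref{thm:universal} then produces $Z\kin\cA_{V,U}$, in particular separable and reflexive, carrying a bimonotone FDD that satisfies subsequential $(V,U)$ estimates in $Z$, and into which $X$ embeds. This is $(\mathrm{iii})$.

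\emph{$(\mathrm{iii})\Rightarrow(\mathrm{i})$.} Let $Z$ and its FDD $(E_i)$ be as in $(\mathrm{iii})$. The upper half of the estimate gives that $(E_i)$ satisfies subsequential $T_{\alpha,c}$-upper estimates in $Z$, so Proposition~\ref{prop:upper-tsirelson-upper-szlenk}, which holds verbatim for the parameter $c$ since $\sz(T_{\alpha,c})\keq\omega^{\alpha\cdot\omega}$ as in Proposition~\ref{prop:tsirelson-szlenk}, gives $\sz(Z)\kleq\omega^{\alpha\cdot\omega}$. The lower half gives that $(E_i)$ satisfies subsequential $\di{T}{*}{\alpha,c}$-lower estimates in $Z$; as $Z$ is reflexive with bimonotone FDD, the standard duality between lower estimates for $(E_i)$ in $Z$ and upper estimates for $(E_i^*)$ in $Z^*$ (the FDD analogue of Proposition~\ref{prop:tree-est-duality}) shows that $(E_i^*)$ satisfies subsequential $\big(\di{T}{*}{\alpha,c}\big)^*\keq T_{\alpha,c}$-upper estimates in $Z^*$, whence $\sz(Z^*)\kleq\omega^{\alpha\cdot\omega}$ as well, by the same application of Proposition~\ref{prop:upper-tsirelson-upper-szlenk}. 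Thus $Z\kin\cC_{\omega^{\alpha\cdot\omega}}$. This class is hereditary: a closed subspace of a separable reflexive space is itself separable and reflexive, its Szlenk index does not increase by Theorem~\ref{thm:szlenk-properties}(ii), and the Szlenk index of its dual, a quotient of the ambient dual, does not increase either, since $\sz(\cdot)$ is monotone under quotient maps. As $X$ embeds into $Z$, we conclude $X\kin\cC_{\omega^{\alpha\cdot\omega}}$.
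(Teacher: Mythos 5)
Your proposal is correct and follows the same overall architecture as the paper — Corollary~\ref{cor:szlenk-lower-upper-est} plus a comparison of Tsirelson norms for (i)$\Rightarrow$(ii), the embedding machinery of~\cite{OSZ2} for (ii)$\Rightarrow$(iii), and the Szlenk bound coming from upper Tsirelson estimates for (iii)$\Rightarrow$(i) — but it re-routes two steps. For (i)$\Rightarrow$(ii) the paper simply applies the corollary with $\delta\keq\omega^{\alpha\cdot n}$ to get $\big(\di{T}{*}{\alpha\cdot n},T_{\alpha\cdot n}\big)$ tree estimates outright and then asserts, directly from the definition, that $\norm{\cdot}_{T_{\alpha\cdot n}}$ and $\norm{\cdot}_{T_{\alpha,c}}$ with $c\keq 2^{-1/n}$ are equivalent; your tail-domination argument through $\cF_\gamma\ksubset\cS_{\alpha\cdot n}$ and the one-sided domination (which you attribute to~\cite{OSZ2}, though it is the same elementary fact) amounts to the same thing, just slightly more laboriously. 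For (ii)$\Rightarrow$(iii) the paper cites a Tsirelson-specific embedding result, \cite[Theorem~15]{OSZ2}, whereas you invoke Theorem~\ref{thm:universal} with $U\keq T_{\alpha,c}$, $V\keq\Us$; this is legitimate but quietly requires Proposition~\ref{prop:tsirelson-properties} (block-stability, right-/left-dominance, $(e_i^*)$ dominated by $(e_i)$, reflexivity) for a general parameter $c$, which you assert without proof — true, and the constants may now depend on $c$, which is harmless here, but it is exactly the technical content the paper outsources to~\cite{OSZ2}. For (iii)$\Rightarrow$(i) the treatment of $\sz(X)$ is identical (your ``verbatim for parameter $c$'' remark matches the paper's parenthetical alternative to its route through $T_{\alpha\cdot n,c^n}$), but for $\sz(X^*)$ the paper dualizes the tree estimates of $X$ itself (Proposition~\ref{prop:tree-est-duality} together with \cite[Corollary~14]{OSZ2}) so that $X^*$ satisfies (ii) and hence (iii), while you dualize the FDD estimates of $Z$ (as the paper does elsewhere, in the proof of Theorem~\ref{thm:universal-ca}) and then pass from $\sz(Z^*)$ to $\sz(X^*)$ using monotonicity of the Szlenk index under quotient maps — a standard and correct fact, but one not stated or proved anywhere in the paper, so strictly speaking you are importing an external lemma where the paper stays self-contained (modulo~\cite{OSZ2}). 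Both of your substitutions work; the paper's choices buy uniformity of constants and avoid the quotient lemma, yours buy independence from the unstated \cite[Theorem~15]{OSZ2} at the cost of the general-$c$ basis properties and the quotient monotonicity.
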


\begin{proof}
  ``(i)$\Rightarrow$(ii)'' By
  Corollary~\ref{cor:szlenk-lower-upper-est} there exists
  $n\kle\omega$ such that $X$ satisfies subsequential
  $\big(\di{T}{*}{\alpha\cdot n}, T_{\alpha\cdot n}\big)$ tree
  estimates. It is not hard to show directly from the definition that
  the norms $\norm{\cdot}_{T_{\alpha\cdot n}}$ and
  $\norm{\cdot}_{T_{\alpha,c}}$ on $\coo$, where $c\keq
  \frac1{2^{1/n}}$, are equivalent. Hence~(ii) follows.

  \noindent
  ``(ii)$\Rightarrow$(iii)'' This is immediate from~\cite[Theorem
  15]{OSZ2}. We note that ``(iii)$\Rightarrow$(ii)''
  is straightforward from the definition.

  \noindent
  ``(iii)$\Rightarrow$(i)'' Let $Z$ be the space given by~(iii), and
  choose $n\kin\bn$ such that $c^n\kleq\frac12$. It follows directly
  from the definition that the unit vector basis of $T_{\alpha,c}$ is
  dominated by the unit vector basis of $T_{\alpha\cdot n,c^n}$ which
  in turn is dominated by the unit vector basis of $T_{\alpha\cdot
  n}$. Hence by Theorem~\ref{thm:szlenk-properties}(ii), and by
  Propositions~\ref{prop:upper-tsirelson-upper-szlenk}
  and~\ref{prop:tsirelson-szlenk} we have
  \[
  \sz(X) \leq \sz(Z) \leq \sz(T_{\alpha\cdot n}) = \omega^{\alpha\cdot
  n\cdot \omega} = \omega^{\alpha\cdot\omega}\ .
  \]
  (Alternatively, one can just observe that the proof of
  Proposition~\ref{prop:tsirelson-szlenk} works for $T_{\alpha,c}$,
  \ie we have $\sz(T_{\alpha,c})\keq \omega^{\alpha\cdot\omega}$.) Now
  since $X$ satisfies~(ii), it follows from duality
  (Proposition~\ref{prop:tree-est-duality} and~\cite[Corollary~14]
  {OSZ2}) that~(ii), and hence~(iii), also hold with $X$ replaced by
  $X^*$. This gives $\sz(X^*)\kleq\omega^{\alpha\cdot\omega}$. Thus
  $X\kin\cC_{\omega^{\alpha\cdot\omega}}$, as required.
\end{proof}

\begin{rem}
  Using the proof of Proposition~\ref{prop:tsirelson-szlenk} one can
  show that $\sz(T_{\cF_\alpha,c})\keq\alpha^{\omega}$ whenever
  $1\kleq\alpha\kle\omega_1$ and $c\kin(0,1)$. Considering the Cantor
  Normal Form of $\alpha$, it is possible to write
  $\alpha^\omega\keq\omega^{\beta\cdot\omega}$ for some
  $\beta\kleq\alpha$. Thus, it is not possible to obtain a finer
  gradiation of the hierarchy $(\cC_\alpha)_{\alpha<\omega_1}$ by
  using fine Schreier families.
\end{rem}

We are now in the position to answer Pe\l czy\'nski's question. We
shall use the notation
\[
\cA_\alpha(C)\keq \cA_{\di{T}{*}{\alpha},
  T_\alpha}(C)\qquad\text{and}\qquad
\cA_\alpha\keq\bigcup _{C<\infty} \cA_\alpha(C)\ ,
\]
where $0\kle\alpha\kle\omega_1$ and $C\kin[1,\infty)$ (see also the
notation preceding Theorem~\ref{thm:universal}). Recall that
\[
\cC_\alpha = \big\{ X:\,X\text{ is separable,
  reflexive, }\max\{\sz(X),\sz(X^*)\}\kleq\alpha \big\}\ ,
\]
and that the Szlenk index of an infinite-dimensional Banach space with
separable dual is of the form $\omega^\eta$ for some
$0\kle\eta\kle\omega_1$ (Theorem~\ref{thm:szlenk=weak}), so we need
only consider the classes $\cC_\alpha$ when $\alpha$ is of this
form. We should also comment on finite-dimensional spaces before
proceeding.

For $\alpha\kle\omega$ we have $\cC_\alpha\keq\cC_0$ is the class of
all finite-dimensional spaces. Let $Z$ be the $\ell_2$-sum of a
countable, dense (with respect to the Banach-Mazur distance) subset of
$\cC_0$. Then by Proposition~\ref{prop:szlenk-ell2-sum} we have
$Z\kin\cC_\omega$. Moreover, $Z$ is universal for $\cC_0$: for all
$X\kin\cC_0$ and for all $\ve\kge 0$, $X$ $(1\kplus\ve)$-embeds into
$Z$.

For $\omega\kleq\alpha\kle\omega_1$ we have $\ell_2\kin C_\alpha$, and
hence $\ell_2\oplus X\kin \cC_\alpha$ for any finite-dimensional space
$X$. Thus we can restrict attention to infinite-dimensional spaces for
the purpose of finding a universal space for the class $\cC_\alpha$.

\begin{thm}
  \label{thm:universal-ca}
  For every ordinal $\alpha$ with $0\kle\alpha\kle\omega_1$ there is a
  separable, reflexive space with an FDD which is universal for the
  class $\cC_{\omega^\alpha}$.

  More precisely, there is a universal constant $K$, such that for all
  $0\kle\alpha\kle\omega_1$ there exists a space
  $Z\kin\cC_{\omega^{\alpha\cdot\omega}}$ with an FDD such that every
  space $X\kin\cC_{\omega^\alpha}$ $K$-embeds into $Z$.
\end{thm}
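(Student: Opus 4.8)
The strategy is to show that, for a single absolute constant $C$, every infinite--dimensional space in $\cC_{\omega^\alpha}$ already lies in the class $\cA_\alpha(C)=\cA_{\di{T}{*}{\alpha},T_\alpha}(C)$, and then to feed this into the abstract universality theorem, Theorem~\ref{thm:universal}, applied to $V\keq\di{T}{*}{\alpha}$ and $U\keq T_\alpha$. The output space will automatically land in $\cC_{\omega^{\alpha\cdot\omega}}$ via Theorem~\ref{thm:tight-ta}, and the heart of the matter is to verify that none of the constants produced depends on $\alpha$. (We assume $\alpha\kgeq 1$.)

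For the first step, let $C$ be the universal constant of Corollary~\ref{cor:szlenk-lower-upper-est}, and let $X\kin\cC_{\omega^\alpha}$ be infinite--dimensional. That corollary yields an ordinal $\gamma\kleq\max\{\sz(X),\sz(X^*)\}\kleq\omega^\alpha$ such that $X$ satisfies subsequential $C$--$\big(\di{T}{*}{\cF_\delta,\frac12},T_{\cF_\delta,\frac12}\big)$ tree estimates for every $\delta\kgeq\gamma$. Taking $\delta\keq\omega^\alpha$ and recalling that $\cS_\alpha\keq\cF_{\omega^\alpha}$ by definition, we obtain that $X$ satisfies subsequential $C$--$(\di{T}{*}{\alpha},T_\alpha)$ tree estimates; since $X$ is moreover separable, infinite--dimensional and reflexive, this is precisely the assertion $X\kin\cA_\alpha(C)$.

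Now I would invoke Theorem~\ref{thm:universal} with $V\keq\di{T}{*}{\alpha}$ and $U\keq T_\alpha$. By Proposition~\ref{prop:tsirelson-properties}, both spaces are reflexive, $(e_i)$ is a normalized, $1$--unconditional, $1$--right--dominant and $B$--block--stable basis of $T_\alpha$, $(e_i^*)$ is a normalized, $1$--unconditional, $1$--left--dominant and $B$--block--stable basis of $\di{T}{*}{\alpha}$, and $(e_i^*)$ is $D$--dominated by $(e_i)$, where $B$ and $D$ are absolute constants. Hence the hypotheses of Theorem~\ref{thm:universal} hold with data $B,D,L\keq R\keq 1$ all independent of $\alpha$, so there is an absolute constant $\Cb\keq\Cb(B,D)$ and, for the $C$ fixed above, an absolute constant $K\keq K_{B,D,1,1}(C)$, together with a space $Z_\alpha\kin\cA_\alpha$ carrying a bimonotone FDD $(E_i)$ satisfying subsequential $\Cb$--$(\di{T}{*}{\alpha},T_\alpha)$ estimates in $Z_\alpha$, such that every member of $\cA_\alpha(C)$ $K$--embeds into $Z_\alpha$. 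Applying the implication ``(iii)$\Rightarrow$(i)'' of Theorem~\ref{thm:tight-ta} (with $c\keq\frac12$) to $Z_\alpha$ itself gives $Z_\alpha\kin\cC_{\omega^{\alpha\cdot\omega}}$; combined with the first step, every infinite--dimensional $X\kin\cC_{\omega^\alpha}$ $K$--embeds into $Z_\alpha$.

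It remains to absorb the finite--dimensional spaces. Let $W$ be the $\ell_2$--sum of a sequence that is dense, in the Banach--Mazur distance, among all finite--dimensional spaces; by Proposition~\ref{prop:szlenk-ell2-sum} applied to this sequence and to the sequence of its duals (each of Szlenk index $\kleq\omega^0$), both $\sz(W)$ and $\sz(W^*)$ are $\kleq\omega$, so $W\kin\cC_\omega\ksubseteq\cC_{\omega^{\alpha\cdot\omega}}$ (here $\alpha\kgeq 1$, so $\omega^{\alpha\cdot\omega}\kgeq\omega$). Put $Z\keq Z_\alpha\oplus_2 W$, which again carries an FDD, built by interleaving those of $Z_\alpha$ and $W$. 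Since a finite $\ell_2$--sum is isomorphic to the corresponding $\oplus_1$--sum and $\sz(\cdot)$ is an isomorphic invariant, Proposition~\ref{prop:szlenk-sum} gives $\sz(Z)\keq\max\{\sz(Z_\alpha),\sz(W)\}\kleq\omega^{\alpha\cdot\omega}$, and, applied to $Z^*\cong Z_\alpha^*\oplus_1 W^*$, also $\sz(Z^*)\kleq\omega^{\alpha\cdot\omega}$; thus $Z\kin\cC_{\omega^{\alpha\cdot\omega}}$. Every infinite--dimensional $X\kin\cC_{\omega^\alpha}$ $K$--embeds into $Z_\alpha\ksubseteq Z$, while every finite--dimensional $X$ $2$--embeds into $W\ksubseteq Z$, so every $X\kin\cC_{\omega^\alpha}$ $\max\{K,2\}$--embeds into $Z$, and $\max\{K,2\}$ is still absolute. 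The only delicate point is the bookkeeping of constants in the first two steps: one must check that $C$ (from Corollary~\ref{cor:szlenk-lower-upper-est}) and $B,D$ (from Proposition~\ref{prop:tsirelson-properties}) are genuinely independent of $\alpha$, so that the constant emitted by Theorem~\ref{thm:universal} is as well; the actual construction of $Z$ is then essentially immediate.
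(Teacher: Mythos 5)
Your proposal is correct and follows essentially the same route as the paper: Corollary~\ref{cor:szlenk-lower-upper-est} places every infinite-dimensional member of $\cC_{\omega^\alpha}$ in $\cA_\alpha(C)$ with a universal constant, and Theorem~\ref{thm:universal} applied with $U\keq T_\alpha$, $V\keq\di{T}{*}{\alpha}$ and the $\alpha$-independent constants of Proposition~\ref{prop:tsirelson-properties} produces the universal space with constant $K\keq K_{B,D,1,1}(C)$. The only (harmless) deviations are that the paper bounds $\sz(Z)$ and $\sz(Z^*)$ by a direct duality argument on the FDD combined with Proposition~\ref{prop:upper-tsirelson-upper-szlenk}, rather than by invoking Theorem~\ref{thm:tight-ta}(iii)$\Rightarrow$(i) as you do (a route the paper deliberately avoids since that theorem leans on further results of~\cite{OSZ2}), and it absorbs the finite-dimensional spaces via the preceding remark that $\ell_2\oplus X\kin\cC_{\omega^\alpha}$ for finite-dimensional $X$, instead of adjoining your auxiliary $\ell_2$-sum $W$.
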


\begin{proof}
  Let $C\kin [1,\infty)$ be the universal constant of
  Corollary~\ref{cor:szlenk-lower-upper-est}, and let $B,
  D\kin[1,\infty)$ be the universal constants of
  Proposition~\ref{prop:tsirelson-properties}. Let $K\keq
  K_{B,D,1,1}(C)$ be the constant from
  Theorem~\ref{thm:universal}. Given $0\kle\alpha\kle\omega_1$, let
  $Z\kin \cA_{\alpha}$ be the universal space given by
  Theorem~\ref{thm:universal} with $U\keq T_\alpha$ and $V\keq
  U^*$. In particular $Z$ has an FDD $(E_i)$ that satisfies
  subsequential $(\di{T}{*}{\alpha},T_\alpha)$ estimates in $Z$. By an
  easy duality argument the FDD $(E^*_i)$ of $Z^*$ satisfies
  subsequential $(\di{T}{*}{\alpha},T_\alpha)$ estimates in
  $Z^*$. Hence by Proposition~\ref{prop:upper-tsirelson-upper-szlenk}
  we have $\max\{ \sz(Z),\sz(Z^*)\}\kleq\omega^{\alpha\cdot\omega}$,
  \ie $Z\kin \cC_{\omega^{\alpha\cdot \omega}}$.

  Now let $X\kin \cC_{\omega^\alpha}$ be an infinite-dimensional
  space. By Corollary~\ref{cor:szlenk-lower-upper-est} we have $X\kin
  \cA_\alpha(C)$, and hence $X$ $K$-embeds into $Z$.
\end{proof}

\begin{rem}
  By a result of Johnson and Odell~\cite{JO:05}, the space $Z$
  constructed in the proof of
  Theorem~\ref{thm:universal-ca} cannot be in the class
  $\cC_{\omega^\alpha}$. Indeed, if that was the case, then every
  space that embeds into $Z$ would in fact $K$-embed into $Z$. Such a
  space is called \emph{elastic }in~\cite{JO:05}, where it is proved that
  a separable, elastic space contains $\co$. Obviously, $Z$ cannot
  contain $\co$ giving the required contradiction.
\end{rem}

Note that the above theorem yields a universal space for the class
$\cC_{\omega^{\alpha\cdot\omega}}$ that lives in the class
$\cC_{\omega^{\alpha\cdot{\omega^2}}}$. A small modification of the
proof gives the slightly better result mentioned in the Introduction:

\begin{thm}
  \label{thm:universal-ca2}
  For every $\alpha\kle\omega_1$ there is a space
  $Z_\alpha\kin\cC_{\omega^{\alpha\cdot\omega+1}}$ with an FDD which
  is universal for the class $\cC_{\omega^{\alpha\cdot\omega}}$.

  More precisely, there is a universal constant $K$, and for each
  $\alpha\kle\omega_1$ there is a sequence
  $\big(Z_{\alpha,n}\big)_{n=1}^\infty$ of spaces with FDDs in
  $\cC_{\omega^{\alpha\cdot\omega}}$ such that for all $X\kin
  \cC_{\omega^{\alpha\cdot\omega}}$ there exists $n\kin\bn$ such that
  $X$ $K$-embeds into $Z_{\alpha,n}$. The space $Z_\alpha$ can then be
  taken to be the $\ell_2$-direct sum of the sequence
  $\big(Z_{\alpha,n}\big)_{n=1}^\infty$.
\end{thm}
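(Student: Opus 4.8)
The plan is to realise $Z_\alpha$ as an $\ell_2$-sum $\big(\bigoplus_n Z_{\alpha,n}\big)_{\ell_2}$ whose $n$-th summand $Z_{\alpha,n}$ is a universal space for the class $\cA_{\alpha\cdot n}$, obtained from Theorem~\ref{thm:universal} applied to the ordinal $\alpha\cdot n$ (rather than to $\alpha\cdot\omega$). Because $(\alpha\cdot n)\cdot\omega\keq\alpha\cdot\omega$, each summand still lies in $\cC_{\omega^{\alpha\cdot\omega}}$, so the only price paid in passing to the $\ell_2$-sum is the single extra power of $\omega$ supplied by Proposition~\ref{prop:szlenk-ell2-sum}. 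Throughout we assume $\alpha\kgeq 1$; for $\alpha\keq 0$ the assertion is the universality, noted before Theorem~\ref{thm:universal-ca}, of the $\ell_2$-sum of a Banach--Mazur dense sequence in $\cC_0$, which lies in $\cC_\omega\keq\cC_{\omega^{0\cdot\omega+1}}$.

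First I would fix the universal constant $C$ of Corollary~\ref{cor:szlenk-lower-upper-est} and the universal constants $B,D$ of Proposition~\ref{prop:tsirelson-properties}, and set $K\keq K_{B,D,1,1}(C)$, the constant furnished by Theorem~\ref{thm:universal} (one takes $L\keq R\keq 1$, since $(e_i)$ is $1$-right-dominant and $(e^*_i)$ is $1$-left-dominant); thus $K$ is independent of $\alpha$ and of $n$. For each $n\kin\bn$ I would apply Theorem~\ref{thm:universal} with $U\keq T_{\alpha\cdot n}$ and $V\keq\di{T}{*}{\alpha\cdot n}$ — admissible by Proposition~\ref{prop:tsirelson-properties} — to obtain a space $Z_{\alpha,n}\kin\cA_{\alpha\cdot n}$ carrying a bimonotone FDD $(E^{(n)}_i)$ that satisfies subsequential $(\di{T}{*}{\alpha\cdot n},T_{\alpha\cdot n})$ estimates in $Z_{\alpha,n}$, and such that every member of $\cA_{\alpha\cdot n}(C)$ $K$-embeds into $Z_{\alpha,n}$. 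Applying Proposition~\ref{prop:upper-tsirelson-upper-szlenk} to $Z_{\alpha,n}$, and — via the duality argument from the proof of Theorem~\ref{thm:universal-ca}, which shows that $(E^{(n)*}_i)$ satisfies subsequential $(\di{T}{*}{\alpha\cdot n},T_{\alpha\cdot n})$ estimates in $Z_{\alpha,n}^*$ — to $Z_{\alpha,n}^*$, one obtains $\sz(Z_{\alpha,n})\kleq\omega^{(\alpha\cdot n)\cdot\omega}\keq\omega^{\alpha\cdot\omega}$ and $\sz(Z_{\alpha,n}^*)\kleq\omega^{\alpha\cdot\omega}$, \ie $Z_{\alpha,n}\kin\cC_{\omega^{\alpha\cdot\omega}}$.

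Next I would verify that every $X\kin\cC_{\omega^{\alpha\cdot\omega}}$ $K$-embeds into some $Z_{\alpha,n}$. Every finite-dimensional space embeds isometrically into $\ell_2\kin\cC_{\omega^\alpha}\ksubset\cC_{\omega^{\alpha\cdot\omega}}$, so it is enough to treat infinite-dimensional $X$; such an $X$ is reflexive with separable dual, so Corollary~\ref{cor:szlenk-lower-upper-est} produces an ordinal $\gamma_X\kleq\max\{\sz(X),\sz(X^*)\}\kleq\omega^{\alpha\cdot\omega}$ for which $X$ satisfies subsequential $C$-$\big(\di{T}{*}{\cF_\delta,\frac12},T_{\cF_\delta,\frac12}\big)$ tree estimates whenever $\delta\kgeq\gamma_X$. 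The crucial step — and, I expect, the main obstacle — is to see that $\gamma_X$ may be taken \emph{strictly} below the limit ordinal $\omega^{\alpha\cdot\omega}\keq\sup_n\omega^{\alpha\cdot n}$. This is automatic when $\max\{\sz(X),\sz(X^*)\}\kle\omega^{\alpha\cdot\omega}$. When instead $\max\{\sz(X),\sz(X^*)\}\keq\omega^{\alpha\cdot\omega}$ — which does occur, \eg for $X\keq T_\alpha$ by Proposition~\ref{prop:tsirelson-szlenk} — one must invoke the non-attainment clause of Theorem~\ref{thm:szlenk=weak}: the ordinals $\alpha'\kleq\sz(X)$ and $\beta'\kleq\sz(X^*)$ out of which $\gamma_X\keq\max\{\alpha',\beta'\}$ is built in the proof of Corollary~\ref{cor:szlenk-lower-upper-est} (through Corollary~\ref{cor:szlenk-upper-est} and Theorem~\ref{thm:main}) are in fact bounded by weak indices $\wi(\cF_\rho)$, which lie \emph{strictly} below $\sz(X)$ and $\sz(X^*)$ respectively since those suprema are not attained; hence $\gamma_X\kle\omega^{\alpha\cdot\omega}$ in this case as well. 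Consequently $\gamma_X\kleq\omega^{\alpha\cdot n}$ for some $n\kin\bn$, and, taking $\delta\keq\omega^{\alpha\cdot n}$ — for which $\cF_{\omega^{\alpha\cdot n}}\keq\cS_{\alpha\cdot n}$, so that $T_{\cF_\delta,\frac12}\keq T_{\alpha\cdot n}$ and $\di{T}{*}{\cF_\delta,\frac12}\keq\di{T}{*}{\alpha\cdot n}$ — one gets $X\kin\cA_{\alpha\cdot n}(C)$, so that $X$ $K$-embeds into $Z_{\alpha,n}$.

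Finally I would set $Z_\alpha\keq\big(\bigoplus_n Z_{\alpha,n}\big)_{\ell_2}$. This space is separable and reflexive; it carries an FDD, obtained by interleaving the FDDs $(E^{(n)}_i)_i$ ($n\kin\bn$); and each $Z_{\alpha,n}$ is a $1$-complemented isometric subspace of it, so by the preceding paragraph every $X\kin\cC_{\omega^{\alpha\cdot\omega}}$ $K$-embeds into $Z_\alpha$. Since $\sz(Z_{\alpha,n})\kleq\omega^{\alpha\cdot\omega}$ and $\sz(Z_{\alpha,n}^*)\kleq\omega^{\alpha\cdot\omega}$ for every $n$, Proposition~\ref{prop:szlenk-ell2-sum}, applied with exponent $\alpha\cdot\omega$ to $Z_\alpha$ and to $Z_\alpha^*\keq\big(\bigoplus_n Z_{\alpha,n}^*\big)_{\ell_2}$, gives $\max\{\sz(Z_\alpha),\sz(Z_\alpha^*)\}\kleq\omega^{\alpha\cdot\omega+1}$; thus $Z_\alpha\kin\cC_{\omega^{\alpha\cdot\omega+1}}$, completing the proof.
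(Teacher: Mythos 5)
Your proposal is correct and follows essentially the same route as the paper: the summands $Z_{\alpha,n}$ universal for $\cA_{\alpha\cdot n}(C)$ via Theorem~\ref{thm:universal} with $U\keq T_{\alpha\cdot n}$, the Szlenk bounds via Proposition~\ref{prop:upper-tsirelson-upper-szlenk} and duality, the placement $X\kin\cA_{\alpha\cdot n}(C)$ via Corollary~\ref{cor:szlenk-lower-upper-est}, and the $\ell_2$-sum handled by Proposition~\ref{prop:szlenk-ell2-sum}. The ``crucial step'' you single out is already built into the paper, since Theorem~\ref{thm:main} (and hence Corollary~\ref{cor:szlenk-lower-upper-est}) produces an ordinal \emph{strictly} below the Szlenk index, precisely by the non-attainment clause of Theorem~\ref{thm:szlenk=weak} that you invoke.
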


\begin{proof}
  For $\alpha\keq 0$ we have already done this just before stating
  Theorem~\ref{thm:universal-ca}. Now assume that
  $0\kle\alpha\kle\omega_1$, and let $C,K$ be the constants defined
  in the proof of
  Theorem~\ref{thm:universal-ca}. Let
  $Z_{\alpha,n}\kin \cA_{\alpha\cdot n}$ be the universal space given
  by Theorem~\ref{thm:universal} with $U\keq T_{\alpha\cdot n}$ and
  $V\keq U^*$. As in the proof of Theorem~\ref{thm:universal-ca} we
  deduce that $\max\{ \sz(Z_{\alpha,n}), \sz(\di{Z}{*}{\alpha,n})\}
  \kleq \omega^{\alpha\cdot n\cdot\omega}\keq
  \omega^{\alpha\cdot\omega}$, \ie that
  $Z_{\alpha,n}\kin\cC_{\omega^{\alpha\cdot\omega}}$. Now let
  $Z_\alpha\keq \Big(\bigoplus_{n=1}^\infty
  Z_{\alpha,n}\Big)_{\ell_2}$
  be the $\ell_2$-direct sum of the sequence
  $\big(Z_{\alpha,n}\big)_{n=1}^\infty$. By
  Proposition~\ref{prop:szlenk-ell2-sum} we have $Z_\alpha\kin
  \cC_{\omega^{\alpha\cdot\omega+1}}$.

  Finally, let $X\kin \cC_{\omega^{\alpha\cdot\omega}}$ be an
  infinite-dimensional space. By
  Corollary~\ref{cor:szlenk-lower-upper-est} there exists $n\kin\bn$
  such that $X\kin \cA_{\alpha\cdot n}(C)$, and hence $X$ $K$-embeds
  into $Z_{\alpha,n}$ and into $Z_\alpha$.
\end{proof}

As indicated in the Introduction, Theorem~\ref{mainthm:embedding} and
Theorem~\ref{mainthm:universal} follow now from
Theorem~\ref{thm:universal-ca2} by applying the following result of
Johnson~\cite{Johnson:71}.

\begin{thm}[{\cite[Theorem A]{Johnson:71}}]
  \label{thm:johnson}
  Let $(G_i)$ be a sequence of finite-dimensional Banach spaces so
  that
  \begin{mylist}{(ii)}
  \item
    if $E$ is a finite-dimensional Banach space and $\ve\kge 0$, then
    there is an $i\kin\bn$ so that $d(E,G_i)\keq\inf\{
    \norm{T}\kcdot\norm{T^{-1}}:\,T\colon E\to G_i\text{ is an
      isomorphism}\}\kle 1\kplus\ve$,
  \item
    for each $i\kin\bn$ there is an infinite $J\ksubset \bn$ so that
    $G_i$ and $G_j$ are isometric for all $j\kin J$.
  \end{mylist}
  Let $C_2\keq(\bigoplus_{i=1}^\infty G_i)_{\ell_2}$ and let $X$ be
  any separable space which has the $\lambda$-metric approximation
  property for some $\lambda\ge 1$.  Then $X\oplus C_2$ has a basis.
\end{thm}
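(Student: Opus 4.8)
The plan is to combine two ingredients: the $\lambda$-metric approximation property of $X$ will be turned into an ``FDD-like skeleton'' inside $X$, and the space $C_2$ — which by (i) and (ii) is an $\ell_2$-sum of a family of finite-dimensional spaces that is dense \emph{with infinite multiplicity} among all finite-dimensional spaces — will be used both to absorb an auxiliary $\ell_2$-sum of finite-dimensional spaces and to repair the two obstructions that separate an FDD from an honest Schauder basis.

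\textbf{A skeleton from the approximation property.} First I would pick finite-rank operators $S_n\colon X\to X$ with $\sup_n\norm{S_n}\kleq\lambda$ and $S_nx\to x$ for every $x\kin X$; a standard subsequence-and-perturbation argument then produces a ``skipped FDD'' structure on $X$. Equivalently, by the Johnson--Rosenthal--Zippin characterization of the bounded approximation property, $X$ is isomorphic — with constants depending only on $\lambda$ — to a complemented subspace of a Banach space $W$ with an FDD.

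\textbf{Reduction to ``$Z\oplus C_2$ with $Z$ having an FDD''.} Using (i) and (ii) one can match each piece of $W$'s FDD with a $(1\kplus\ve_n)$-isometric $G_i$ on distinct, previously unused indices — this is exactly where infinite multiplicity is needed — and a back-and-forth matching of the constituent pieces up to arbitrarily small perturbations then shows that $C_2\cong C_2\oplus V$ for \emph{every} $\ell_2$-sum $V$ of a sequence of finite-dimensional spaces; in particular $C_2\cong C_2\oplus C_2\cong C_2\oplus W$. Since $X$ is complemented in $W$, a Pe{\l}czy\'nski decomposition reduces the theorem to showing that $Z\oplus C_2$ has a basis whenever $Z$ has an FDD $(Z_n)$.

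\textbf{From an FDD to a basis, and the main obstacle.} Here I would split off from $C_2$ an $\ell_2$-sum $\big(\bigoplus_n G_{j_n}\big)_{\ell_2}$ with the $j_n$ distinct, $d(Z_n,G_{j_n})\kleq1\kplus\delta_n$ and $\sum_n\delta_n\kle\infty$, absorb the remainder of $C_2$ by the previous step, and so reduce to $Z\oplus\big(\bigoplus_n G_{j_n}\big)_{\ell_2}$, a space with the FDD $(Z_n\oplus G_{j_n})_n$ whose two block summands are almost isometric; one then builds a basis by interleaving bases of the successive blocks. The delicate point — and the reason the $G_i$ must be available up to \emph{arbitrarily small} distortion rather than up to a fixed constant — is that the obvious ``block-by-block staircase'' basis has basis constant comparable to that of the chosen basis of $Z_n$, which cannot be controlled; indeed, absent such almost-isometric matching, Szarek's example shows that an FDD need not refine to a basis at all. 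So the real work is a cross-block choice of basis vectors using the near-isometries $Z_n\to G_{j_n}$ that cancels these constants and leaves only $\prod_n(1\kplus\delta_n)\kle\infty$ in the basis constant — this is the technical heart of Johnson's argument, and where I expect the main difficulty to lie.
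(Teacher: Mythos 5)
First, a point of comparison: the paper does not prove this statement at all --- it is quoted verbatim as Theorem~A of Johnson's 1971 paper \emph{Factoring compact operators} and used as a black box --- so your proposal can only be measured against Johnson's original argument, not against anything in this paper.

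Measured that way, your outline has the right ingredients at the two ends, but the middle step is wrong, and it is exactly where the content lies. The absorption $C_2\cong C_2\oplus V$ for $V$ an $\ell_2$-sum of finite-dimensional spaces is argued correctly from (i) and (ii); however, the claim ``$C_2\cong C_2\oplus W$'' for $W$ a space with an FDD is not a particular case of it ($W$ is not the $\ell_2$-sum of its FDD blocks) and is false in general: $C_2$ is reflexive, being an $\ell_2$-sum of finite-dimensional spaces, while $W$ can be any space with a basis. Without it your reduction collapses: even granting the target lemma that $Z\oplus C_2$ has a basis whenever $Z$ has an FDD, applying it to $Z=W$ only gives a basis of $W\oplus C_2$, and passing back to the complemented subspace $X\oplus C_2$ loses the basis again --- ``a complemented subspace of a space with a basis acquires a basis after adding $C_2$'' is precisely the theorem being proved, so the proposed Pe\l czy\'nski decomposition is circular. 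The correct intermediate step, and the way Johnson and Johnson--Rosenthal--Zippin proceed, uses the finite-rank operators $S_n$ themselves rather than the black box ``complemented in an FDD space'': after passing to a subsequence one can make $\norm{S_{n+1}S_n-S_n}$ summable (since $S_n\to I$ uniformly on the relatively compact sets $S_m(B_X)$), and one then produces an FDD not of some uncontrolled superspace $W$ but of $X\oplus V$ with $V$ the $\ell_2$-sum of the finite-dimensional ranges $S_n(X)$ --- a space that $C_2$ genuinely absorbs. Finally, the step you yourself label ``the technical heart'' is left entirely unproved; the needed tool is the Johnson--Rosenthal--Zippin lemma that for finite-dimensional $E\subseteq F$ the space $F\oplus E$ has a basis whose basis constant is below a universal bound (with control of the initial span), which is what lets the external near-isometric copies $G_{j_n}$ neutralize the possibly unbounded basis constants of the blocks. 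As it stands, the proposal is a plausible road map with one false bridge and the central lemma missing, not a proof.
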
 

Note that the $\lambda$-metric approximation property is also known as
the $\lambda$-bounded approximation property.

\begin{proof}[Proof of Theorems~\ref{mainthm:embedding}
    and~\ref{mainthm:universal}]
  Clearly, spaces $X$ with an FDD have the $\lambda$-metric
  approximation property for some $\lambda\kgeq 1$, meaning that for
  any compact set $K\ksubset X$ and $\ve\kge 0$ there is a finite rank
  operator $T$ with $\norm{T(x)-x}\kle \ve$ for all $x\kin K$. Let
  $C_2$ be the space defined in Theorem~\ref{thm:johnson}, and let
  $Z_\alpha$ and $Z_{\alpha,n},\ n\kin\bn$, be the spaces from
  Theorem~\ref{thm:universal-ca2}. Then $Z_\alpha\oplus C_2$ and
  $Z_{\alpha,n}\oplus C_2$ have Schauder bases and it follows from
  Propositions~\ref{prop:szlenk-sum} and~\ref{prop:szlenk-ell2-sum}
  that $\sz(Z_\alpha\oplus
  C_2)\keq\sz(Z_\alpha)\keq\omega^{\alpha\cdot\omega+1}$ and
  $\sz(Z_{\alpha,n}\oplus C_2)\keq\omega^{\alpha\cdot\omega}$.
\end{proof}

In the remainder of this section we give a proof of our main result,
Theorem~\ref{thm:main}, which is at the heart of our embedding and
universality results.

\begin{proof}[Proof of Theorem~\ref{thm:main}]
  Fix a constant $D$ with $4\kle D\kle C$, and choose $\rho\kin (0,1)$
  such that $4\kplus 12\rho D\kle D$. Set
  \[
  \cF\keq \Big\{ (x_i)\kin S_X^{<\omega}:\,\bignorm{\sum a_ix_i}\kgeq
  2\rho\sum a_i \text{ for all }(a_i)\ksubset [0,\infty) \Big\}\ .
  \]
  Note that $\cF$ is a hereditary tree on $S_X^{<\omega}$. Next fix a
  sequence $\veb\keq(\ve_i)\ksubset\big(0,\frac12\big)$ such that
  \[
  \cF^Z_{10\veb} \subset \Big\{ (z_i)\kin
  S_Z^{<\omega}:\,\bignorm{\sum a_iz_i}\kgeq \rho\sum a_i \text{ for
  all }(a_i)\ksubset \br^+ \Big\}\ .
  \]
  Now consider the hereditary block tree $\cG\keq
  \Sigma(E,Z)\cap\cF^Z_{\veb}$ of $(E_i)$ in $Z$ and its
  compression $\cGt$. Let $\alpha$ be the Cantor-Bendixson
  index of $\cGt$. By Proposition~\ref{prop:fat-index} and by
  Theorem~\ref{thm:szlenk=weak} we have
  \[
  \wi\big( \cF^Z_{2\veb} \big) \leq \wi \big(\cF^X_{10\veb}\big) <
  \sz(X)\ .
  \]
  Since $\cG^{E,Z}_{\veb}\ksubset \cF^Z_{2\veb}$, we have $\bli \big(
  \cG^{E,Z}_{\veb} \big) \kleq \wi \big( \cF^Z_{2\veb} \big)$. Since
  $\sz(X)$ is a limit ordinal, it follows by
  Proposition~\ref{prop:compression} that
  \[
  \alpha = \cbi(\cGt) < \sz(X)\ .
  \]
  We now apply Theorem~\ref{thm:thin-ramsey} to obtain an infinite
  subset $M\keq \{m_1, m_2,\dots\}$ of $\bn$ such that
  \begin{equation}
    \label{eq:main;G}
    \MAX (\cF_\alpha)\cap \fin{M}\cap\cGt =\emptyset\ .
  \end{equation}
  To see this, give each element $A$ of the thin family
  $\MAX(\cF_\alpha)$ colour \emph{red }if $A\kin\cGt$, and colour
  \emph{blue }otherwise, and obtain
  $M\keq\{m_1,m_2,\dots\}\kin\infin{\bn}$ such that
  $\MAX(\cF_\alpha)\cap\fin{M}$ is monochromatic. Now the map
  $i\mapsto m_i\colon\bn\to M$ induces a homeomorphism
  $\fin{\bn}\to\fin{M}$ that maps $\cF_\alpha$ onto
  $\cF_\alpha\cap\fin{M}$ (as $\cF_\alpha$ is spreading). Since the
  Cantor-Bendixson index is a topological invariant, it follows that
  $\cbi(\cF_\alpha\cap\fin{M})\keq \alpha\kplus 1$. Hence
  $\MAX(\cF_\alpha)\cap\fin{M}$ cannot be monochromatic \emph{red},
  and thus~\eqref{eq:main;G} follows. Observe that if
  $F\kin\cGt\cap\fin{M}$, then $F\kin\cF_\alpha$.
 
  Without loss of generality we may assume that $m_1\kge 1$. We set
  $m_0\keq 1$ and $G_i\keq \bigoplus _{j=m_{i-1}}^{m_i-1} E_j$ for all
  $i\kin\bn$. Finally, we choose $\deltab\keq(\delta_i)\ksubset(0,1),\
  \delta_i\downarrow 0$, such that
  \begin{align*}
    4\sum _{i=1}^\infty\delta_i &< \min (\rho,C\kminus
    D), &&\text{and}\\
    4\sum_{j\geq i}\delta_j &< \ve_i &&\text{for all }i\kin\bn\ .
  \end{align*}
  
  We will now show that for these choices of $\alpha, \deltab$ and
  $(G_i)$ the conclusion of the theorem holds.

  Let $(x_i)\ksubset S_X$ be a $\deltab$-block sequence of $(G_n)$
  with $\norm{x_i\kminus P^G_{(s_{i-1},s_i]}x_i}\kle\delta_i$ for all
  $i\kin\bn$, $1\kleq s_0\kle s_1\kle s_2\kle\dots$. Set
  \[
  z_i=\frac{P^G_{(s_{i-1},s_i]}x_i}{\norm{P^G_{(s_{i-1},s_i]}x_i}}
  \qquad\text{for all }i\kin\bn\ .
  \]
  Note that $\norm{x_i\kminus z_i}\kle 2\delta _i$ for all
  $i\kin\bn$. Replacing each $z_i$ by a small perturbation of
  itself, if necessary, we can assume that $\min\supp_G(z_i)\keq
  s_{i-1}\kplus 1$ and $\min\supp_E(z_i)\keq m_{s_{i-1}}$ for all
  $i\kin\bn$. We are going to show that for any $(a_i)\kin\coo$ we
  have
  \begin{equation}
    \label{eq:main;norm-bound}
  \Bignorm{\sum a_iz_i}\leq D \Bignorm{\sum a_i e_{m_{s_{i-1}}}}\ .
  \end{equation}
  It then follows easily from the choice of $\deltab$ that $(x_i)$ is
  $C$-dominated by $(e_{m_{s_{i-1}}})$. The proof
  of~\eqref{eq:main;norm-bound} proceeds by induction on the size of
  the support of $(a_i)$. If this is one, then the statement is
  clear. In general, we begin by choosing $z^*\kin B_{Z^*}$ such that
  \[
  \Bignorm{\sum a_iz_i}=\sum a_iz^*(z_i)\ .
  \]
  We then consider the set
  \[
  I=\{ i\kin\bn:\,\abs{z^*(z_i)}\kgeq 3\rho\}\ ,
  \]
  which splits into $I^+\keq \{ i\kin\bn:\,z^*(z_i)\kgeq 3\rho\}$ and
  $I^-\keq I\ksetminus I^+$. For a finite set $F\ksubset\bn$ we shall
  write $\ms(F)$ for the set $\{ m_{s_{i-1}}:\,i\kin F\}$. We claim
  that $\ms(I^+)$ and $\ms(I^-)$ belong to $\cF_\alpha$. Indeed, by
  the choice of $\deltab$, for any $(b_i)_{i\in I^+}\ksubset\br^+$ we
  have
  \[
  \Bignorm{\sum_{i\in I^+} b_ix_i} \geq \sum_{i\in I^+} b_iz^*(z_i) -
  \sum_{i\in I^+} b_i\kcdot 2\delta_i \geq 2\rho\sum_{i\in I^+} b_i\ .
  \]
  This shows that $(x_i)_{i\in I^+}$ belongs to $\cF$. It follows that
  $(z_i)_{i\in I^+}\kin \cG$, and $\ms(I^+)\kin \cGt\cap
  \fin{M}\ksubset \cF_\alpha$, as required. A similar argument,
  using $-z^*$ instead of $z^*$, shows that $\ms(I^-)\kin\cF_\alpha$.

  We next partition $\supp(a_i)\setminus I$ into sets
  $J_1\kle\dots\kle J_\ell$, where $\ell\kin\bn$ and we have
  \begin{equation}
    \label{eq:main;small-bit}
    \begin{array}{r@{\ }l}
    \ds 3\rho< &\bignorm{z^*|_{\spn\{z_i:\,i\in J_k\}}} \leq 6\rho \qquad
    \text{for $1\kleq k\kle \ell$, and}\\[1ex]
    & \ds\bignorm{z^*|_{\spn\{z_i:\,i\in J_\ell\}}} \leq 6\rho\ .
    \end{array}  
  \end{equation}
  This is clearly possible by the definition of $I$ and by the
  bimonotonicity of $(E_i)$. Set $F\keq \{\min J_k:\, k\keq
  1,\dots,\ell\kminus 1\}$. We claim that $\ms(F)\kin
  \cF_\alpha\ksetminus \MAX(\cF_\alpha)$, from which it follows that
  $\ms(\Ft)\kin\cF_\alpha$, where $\Ft\keq F\cup\{\min J_\ell\}$. To
  prove the claim first choose for each $k\keq 1,\dots,\ell\kminus 1$
  a vector $u_k\keq \sum_{i\in J_k} c_iz_i\kin S_Z$ such that $\sum
  c_iz^*(z_i)\kge 3\rho$. We can assume without loss of generality
  that $c_{\min J_k}\kneq 0$, \ie that $\min\supp_E(u_k)\keq
  m_{s_{\min J_k-1}}$. Set $\vt_k\keq \sum _{i\in J_k} c_ix_i$ and
  $v_k\keq \frac{\vt_k}{\norm{\vt_k}}$ for each $k\keq
  1,\dots,\ell\kminus 1$, and note that
  \[
  \norm{v_k-u_k}\leq 2\norm{\vt_k-u_k}\leq 2\sum _{i\in J_k}
  \abs{c_i}\kcdot 2\delta_i\leq 4\sum _{i\geq k}\delta_i\ .
  \]
  It follows that for any $(b_k)_{k=1}^{\ell-1}\ksubset\br^+$ we have
  \[
  \Bignorm{\sum_{k=1}^{\ell-1} b_kv_k} \geq
  \sum_{k=1}^{\ell-1} b_k z^*(u_k) - \sum_{k=1}^{\ell-1}
  b_k\kcdot \norm{v_k-u_k} \geq 2\rho \sum_{k=1}^{\ell-1} b_k\ .
  \]
  We deduce that $(v_k)\kin\cF$, $(u_k)\kin\cG$ and
  $\ms(F)\kin\cGt\cap\fin{M}\ksubset\cF_\alpha\ksetminus
  \MAX(\cF_\alpha)$, as claimed.

  The following sequence of inequalities now completes the proof
  of~\eqref{eq:main;norm-bound}.
  \begin{align*}
    \Bignorm{\sum a_iz_i} &= \sum a_iz^*(z_i) \leq \sum_{i\in
    I^+}\abs{a_i} + \sum_{i\in I^-}\abs{a_i} + \sum_{k=1}^\ell
    6\rho\kcdot\Bignorm{\sum_{i\in J_k} a_iz_i}\\[1ex]
    &\leq 2\Bignorm{\sum_{i\in I^+}
      a_ie_{m_{s_{i-1}}}}_{T_{\cF_\alpha,\frac12}} +
    2\Bignorm{\sum_{i\in I^-}
      a_ie_{m_{s_{i-1}}}}_{T_{\cF_\alpha,\frac12}}\\
    \intertext{\hfill $\ds+ 6\rho\kcdot D \sum_{k=1}^\ell
      \Bignorm{\sum_{i\in J_k}
	a_ie_{m_{s_{i-1}}}}_{T_{\cF_\alpha,\frac12}}$}\\
    &\leq (4+12\rho D)\kcdot \bignorm{\sum a_i
    e_{m_{s_{i-1}}}}_{T_{\cF_\alpha,\frac12}}\leq D\bignorm{\sum a_i
      e_{m_{s_{i-1}}}}_{T_{\cF_\alpha,\frac12}}\ .
  \end{align*}
  It is the third line where we apply the induction hypothesis. Note
  that by~\eqref{eq:main;small-bit} (and since $12\rho\kle 1$), each
  $J_k$ has size strictly smaller than that of the support of
  $(a_i)$.
\end{proof}

\section{Further remarks}

In~\cite{DF} the following universality result is proved.

\begin{thm}[\cite{DF}]
  \label{thm:dodos-ferenczi}
  For every countable ordinal $\xi$ there is a space $Y_\xi$ with
  separable dual such that every Banach space $X$ with
  $\sz(X)\kleq\xi$ embeds into $Y_\xi$.
\end{thm}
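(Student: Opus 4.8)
The plan is to reduce, via the structural results of the previous sections, to a universality problem for a class singled out by an upper Tsirelson-tree estimate. First, any $X$ with $\sz(X)\kleq\xi$ has separable dual by Theorem~\ref{thm:szlenk-properties}(i), so by the last part of Theorem~\ref{thm:szlenk=weak} either $X$ is finite-dimensional or $\sz(X)\keq\omega^\beta$ for some $\beta\kgeq 1$. Hence, letting $\alpha$ be the largest ordinal with $\omega^\alpha\kleq\xi$, the condition $\sz(X)\kleq\xi$ is equivalent to $\sz(X)\kleq\omega^\alpha$, and it suffices to construct $Y_{\omega^\alpha}$. When $\alpha\keq0$ the relevant class consists of finite-dimensional spaces and one takes the $\ell_2$-sum of a Banach--Mazur-dense sequence of them, exactly as for $\cC_0$ before Theorem~\ref{thm:universal-ca}; so assume $\alpha\kgeq1$.

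By Corollary~\ref{cor:szlenk-upper-est} there is a universal constant $C$ such that every $X$ with $\sz(X)\kleq\omega^\alpha$ satisfies subsequential $C$-$T_{\cF_{\omega^\alpha},\frac12}$-upper tree estimates (the corollary yields some $\alpha_X\kleq\sz(X)\kleq\omega^\alpha$, and is applied with exponent $\omega^\alpha\kgeq\alpha_X$); since $\cS_\alpha\keq\cF_{\omega^\alpha}$, this means $X$ satisfies $C$-$T_\alpha$-upper tree estimates. The basis of $T_\alpha$ is $1$-unconditional, $1$-right-dominant and block-stable (Proposition~\ref{prop:tsirelson-properties}), so one may now invoke the one-sided, ``upper-estimates-only'' analogue of Theorem~\ref{thm:universal}: the tree-of-FDDs construction of~\cite{OSZ2}, applied to the class $\cB$ of all separable Banach spaces with separable dual that satisfy subsequential $C$-$T_\alpha$-upper tree estimates, produces a single space $Y$ with an FDD satisfying subsequential $T_\alpha$-upper estimates into which every member of $\cB$ $K$-embeds for a universal $K$. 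The lower estimate and reflexivity that appear in Theorem~\ref{thm:universal} are used only for the lower half of its conclusion and for the reflexivity of the universal space; the weakly null even tree arguments behind the upper half need only that the weak topology on the ball be metrizable, which holds since the duals in $\cB$ are separable, just as in the proof of Proposition~\ref{prop:fat-index}. Finally, since the FDD of $Y$ satisfies subsequential $T_\alpha$-upper estimates, Proposition~\ref{prop:upper-tsirelson-upper-szlenk} gives $\sz(Y)\kleq\omega^{\alpha\cdot\omega}\kle\omega_1$, so $Y^*$ is separable by Theorem~\ref{thm:szlenk-properties}(i). Every $X$ with $\sz(X)\kleq\xi$ lies in $\cB$ by the first two sentences of this paragraph, so it embeds into $Y$; take $Y_\xi\keq Y$.

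The step I expect to be the main obstacle is the ``one-sided universal space'' just invoked: one must check that the amalgamation and coordinatization in~\cite{OSZ2} that produce the universal FDD really go through with only an upper tree estimate and only a separable dual (rather than reflexivity) as hypothesis. If one prefers to avoid re-examining that construction, the same conclusion can be reached through the descriptive-set-theoretic route of Dodos and Ferenczi: the class $\{X:\sz(X)\kleq\xi\}$ is analytic in the Effros--Borel structure on subspaces of $C[0,1]$, the passage from $X\kin\cB$ to a Zippin-type shrinking FDD refinement can be made Borel, and amalgamating the resulting analytic family of FDD spaces into an $\ell_2$-Baire sum yields $Y_\xi$; there the delicate point becomes showing that the uniform upper $T_\alpha$ estimate on the branch subspaces forces $\sz(Y_\xi)$ to stay countable, which is a tree-indexed elaboration of the bookkeeping in the proofs of Propositions~\ref{prop:szlenk-ell2-sum} and~\ref{prop:upper-tsirelson-upper-szlenk}. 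Either way, everything apart from this single technical point is either one of the reductions above or a direct citation of the structural results established in this paper and in~\cite{OSZ2}.
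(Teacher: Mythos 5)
Your proposal does not follow the paper's route at all, and it contains an acknowledged but genuine gap at its centre. First, note that the paper does not prove Theorem~\ref{thm:dodos-ferenczi}: it is quoted from~\cite{DF}, and the justification indicated in the final section is a two-line deduction — the class $\cS\cD$ of spaces with separable dual is strongly bounded (the main result of~\cite{DF}), and $\{X\kin\cS\cD:\,\sz(X)\kleq\xi\}$ is analytic (even Borel, by~\cite{Bossard:02}), so a space $Y_\xi\kin\cS\cD$ containing all members of this class exists. Your reductions at the start are fine: $\sz(X)\kleq\xi$ forces $X^*$ separable, Theorem~\ref{thm:szlenk=weak} lets you replace $\xi$ by $\omega^\alpha$, and Corollary~\ref{cor:szlenk-upper-est} gives subsequential $C$-$T_\alpha$-upper tree estimates with a universal constant $C$, uniformly over the class.

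The gap is the step you yourself flag: the ``one-sided, upper-estimates-only analogue of Theorem~\ref{thm:universal}'' for spaces that are merely Asplund. Theorem~\ref{thm:universal} is stated, and proved in~\cite{OSZ2}, only for the class $\cA_{V,U}$ of \emph{reflexive} spaces satisfying \emph{two-sided} tree estimates, and nothing in this paper (nor in the results it cites) asserts a separable-dual, upper-only version. Your claim that reflexivity and the lower estimate enter only into ``the lower half of its conclusion'' is precisely the unproved assertion: in the construction behind Theorem~\ref{thm:universal}, reflexivity is what allows one to pass between weakly null trees, block trees of shrinking \emph{and} boundedly complete FDDs, and the dual estimates of Proposition~\ref{prop:tree-est-duality}; for spaces with only a separable dual, producing a single universal space whose dual remains separable is exactly the difficulty that~\cite{DF} overcomes by descriptive set theory, and which is not a routine modification of~\cite{OSZ2}. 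Note also that what you would obtain, if the step went through, is strictly stronger than the theorem (a universal space with $\sz(Y_\xi)\kleq\omega^{\alpha\cdot\omega}$), whereas the paper explicitly remarks that the Dodos--Ferenczi result gives no bound on $\sz(Y_\xi)$ — a further sign that the missing step is at least as hard as the statement being proved. Your alternative descriptive-set-theoretic sketch has the same character: the Borelness of the Zippin-type refinement and the key point that the amalgamated Baire/$\ell_2$ sum still has separable dual (equivalently, countable Szlenk index) are deferred to unexecuted ``bookkeeping''. As it stands, the proposal reduces the theorem to unproved statements; the intended proof is simply the citation: strong boundedness of $\cS\cD$ from~\cite{DF} combined with Bossard's result that $\{X:\,\sz(X)\kleq\xi\}$ is Borel.
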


This result of P.~Dodos and V.~Ferenczi is similar to our universality
results, but the methods used are completely different. Note that
unlike Theorems~\ref{thm:universal-ca} and~\ref{thm:universal-ca2},
the above result does not give information on the Szlenk index of the
universal space $Y_\xi$. The reason for this is that the use of
descriptive set theory in proving results like
Theorem~\ref{thm:dodos-ferenczi} yields existence proofs, whereas our
approach is more constructive.

In this final section we describe the setting in which descriptive set
theory can be used to study universality problems for certain classes
of separable Banach spaces. We shall also explain what is missing if
one tries to use this approach to prove the main results of our paper.

Recall that every separable Banach space is a subspace of $C[0,1]$,
the space of continuous functions on the Cantor set. The set SB of all
closed subspaces of $C[0,1]$ is given the Effros-Borel structure,
which is the $\sigma$-algebra generated by the sets $\{
F\kin\mathrm{SB}:\, F\cap U\kneq\emptyset\}$, where $U$ ranges over
all open subsets of $C[0,1]$. This allows one to study classes of
Banach spaces according to their descriptive complexity and apply
results of
descriptive set theory. This has been first formalized by
B.~Bossard~\cite{Bossard:02}, and then taken up by S.~Argyros and
P.~Dodos~\cite{AD} to study universality problems. One of the central
notions introduced in~\cite{AD} is the following.

\begin{defn}
  A class $\cC$ of separable Banach space in SB is said to be
  \emph{strongly bounded }if for every analytic subset $\cA$ of $\cC$
  there exists $Y\kin\cC$ that contains isomorphic copies of every
  $X\kin\cA$.
\end{defn}

The main result of~\cite{DF} is that the classes $\cS\cR$ of
separable, reflexive spaces and $\cS\cD$ of spaces with separable dual
are strongly bounded. Since $\{ X\kin\cS\cD:\,\sz(X)\kleq\xi\}$ is
analytic (even Borel, this was proved in~\cite{Bossard:02}),
Theorem~\ref{thm:dodos-ferenczi} follows. However, it was not known
whether the classes $\cC_\alpha$ from Pe\l czy\'nski's question were
analytic or not, and so the main theorem from~\cite{DF} could not be
applied. From our results we can now prove the following.

\begin{thm}
  \label{thm:ca-analytic}
  For every countable ordinal $\alpha$ the class $\cC_\alpha$ is
  analytic in the Effros-Borel structure of SB.
\end{thm}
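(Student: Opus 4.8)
The plan is to reduce the statement to the universality theorem (Theorem~\ref{thm:universal-ca2}) and two external facts from descriptive set theory: (Bossard~\cite{Bossard:02}) for a fixed separable space $Y$ the set $\{X\in\mathrm{SB}:X\text{ embeds into }Y\}$ is analytic, and $\{X\in\mathrm{SB}:\sz(X)\le\xi\}$ is Borel for every countable $\xi$; and (Dodos~\cite{Dodos}) if $\cA\subseteq\cS\cD$ is analytic, then the class $\cA^{*}:=\{X\in\mathrm{SB}:X\cong Y^{*}\text{ for some }Y\in\cA\}$ of duals of members of $\cA$ is again analytic. First I would record two reductions. By Theorem~\ref{thm:szlenk=weak} the Szlenk index of an infinite-dimensional space with separable dual is always a power of $\omega$; hence, writing $\omega^{\eta}$ for the largest power of $\omega$ with $\omega^{\eta}\le\alpha$, the condition $\sz(X)\le\alpha$ is equivalent to $\sz(X)\le\omega^{\eta}$, and likewise with $X^{*}$ in place of $X$ (a member of $\cC_\alpha$ has $\sz(X^{*})\le\alpha<\omega_{1}$, so $X^{*}$ contains no copy of $\ell_{1}$ and Theorem~\ref{thm:szlenk=weak} applies to it as well). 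Thus $\cC_\alpha=\cC_{\omega^{\eta}}$. When $\alpha<\omega$ the class $\cC_\alpha$ is the (Borel) class of finite-dimensional spaces, so I may assume $\alpha\ge\omega$, i.e. $\eta\ge1$.

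Next I would treat the exponent $\omega^{\eta\cdot\omega}$. By Theorem~\ref{thm:universal-ca2} there is a sequence $(Z_{\eta,n})_{n}$ of spaces in $\cC_{\omega^{\eta\cdot\omega}}$ such that every member of $\cC_{\omega^{\eta\cdot\omega}}$ embeds into some $Z_{\eta,n}$. I claim that in fact
\[
\cC_{\omega^{\eta\cdot\omega}}=\bigcup_{n=1}^{\infty}\{X\in\mathrm{SB}:X\text{ embeds into }Z_{\eta,n}\}\ .
\]
The inclusion ``$\subseteq$'' is the universality statement (a finite-dimensional space embeds into any $Z_{\eta,n}$). For ``$\supseteq$'' I would observe that $\cC_{\omega^{\eta\cdot\omega}}$ is closed under passing to closed subspaces: a subspace of a reflexive space is reflexive; $\sz(\cdot)$ is monotone under subspaces by Theorem~\ref{thm:szlenk-properties}(ii); and $\sz(\cdot)$ is monotone under quotients, since for a quotient map $q\colon X\to Y$ the adjoint $q^{*}$ maps $B_{Y^{*}}$ isometrically and $w^{*}$-homeomorphically onto the $w^{*}$-compact set $q^{*}(B_{Y^{*}})\subseteq B_{X^{*}}$, while the Szlenk derivatives of a $w^{*}$-compact set can only increase when the ambient $w^{*}$-compact set is enlarged --- applying this to a subspace $Y$ of $X$ gives $\sz(Y^{*})\le\sz(X^{*})$. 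Therefore $\cC_{\omega^{\eta\cdot\omega}}$ is a countable union of analytic sets, hence analytic.

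Finally I would descend from $\omega^{\eta\cdot\omega}$ to $\omega^{\eta}$. Put $\cA_{1}:=\cC_{\omega^{\eta\cdot\omega}}\cap\{X\in\mathrm{SB}:\sz(X)\le\omega^{\eta}\}$; this is the intersection of an analytic set with a Borel set, hence analytic, and its members are separable reflexive, so $\cA_{1}\subseteq\cS\cD$. By Dodos's theorem, $\cA_{1}^{*}$ is analytic. It remains to check the identity $\cC_{\omega^{\eta}}=\cA_{1}\cap\cA_{1}^{*}$. A space $X$ lies in $\cA_{1}$ exactly when $X$ is reflexive with $\sz(X)\le\omega^{\eta}$ and $\sz(X^{*})\le\omega^{\eta\cdot\omega}$; and since $X^{*}$ is reflexive if and only if $X$ is (and then $X^{**}=X$), one has $X\in\cA_{1}^{*}$ if and only if $X^{*}\in\cA_{1}$, i.e. exactly when $X$ is reflexive with $\sz(X^{*})\le\omega^{\eta}$ and $\sz(X)\le\omega^{\eta\cdot\omega}$. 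Intersecting these two descriptions, and using $\omega^{\eta}\le\omega^{\eta\cdot\omega}$, leaves precisely the reflexive spaces $X$ with $\max\{\sz(X),\sz(X^{*})\}\le\omega^{\eta}$, that is, $\cC_{\omega^{\eta}}$. Hence $\cC_\alpha=\cC_{\omega^{\eta}}=\cA_{1}\cap\cA_{1}^{*}$ is analytic.

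The main obstacle, such as it is, will be to pin down the precise form of Dodos's dualization theorem and to confirm that it applies to $\cA_{1}$; the reason the argument first passes through $\cC_{\omega^{\eta\cdot\omega}}$ is exactly that this class supplies reflexivity and membership in $\cS\cD$ for free, before the sharper Szlenk bound $\sz(X)\le\omega^{\eta}$ is imposed, so that the input to Dodos's theorem is a genuine analytic subclass of $\cS\cD$. A secondary point is the clean verification that the Szlenk index is monotone under quotients, needed for the subspace-closure of $\cC_{\omega^{\eta\cdot\omega}}$; everything else reduces to routine operations on analytic and Borel subsets of $\mathrm{SB}$.
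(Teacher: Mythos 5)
Your proposal is correct, and its skeleton is the same as the paper's: express $\cC_{\omega^{\eta\cdot\omega}}$ as the countable union $\bigcup_n\{X:\,X\hookrightarrow Z_{\eta,n}\}$ of (analytic) embeddability classes coming from Theorem~\ref{thm:universal-ca2}, and then handle a general countable $\alpha$ by combining this with Bossard's and Dodos's descriptive set-theoretic results. You diverge in two local steps, both legitimately. First, for the inclusion $\bigcup_n\{X:\,X\hookrightarrow Z_{\eta,n}\}\subseteq\cC_{\omega^{\eta\cdot\omega}}$ the paper argues that a subspace of $Z_{\eta,n}$ inherits subsequential $(\di{T}{*}{\eta\cdot n},T_{\eta\cdot n})$ tree estimates, dualizes these, and invokes Proposition~\ref{prop:upper-tsirelson-upper-szlenk} to bound $\sz(X)$ and $\sz(X^*)$; you instead observe that membership in $\cC_{\omega^{\eta\cdot\omega}}$ is hereditary under isomorphic embedding, using Theorem~\ref{thm:szlenk-properties}(ii) for $\sz(X)$ and the standard fact that the Szlenk index does not increase under quotients (so $\sz(Y^*)\leq\sz(Z^*)$ for $Y\subseteq Z$); your sketch of that fact via the adjoint of the quotient map and the monotonicity of Szlenk derivations under set inclusion is correct, and this route is more elementary, bypassing the Tsirelson-space machinery entirely. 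Second, for general $\alpha$ the paper quotes Dodos's result in the packaged form that $\cS_\alpha=\{X:\,\max\{\sz(X),\sz(X^*)\}\leq\alpha\}$ is analytic and writes $\cC_\alpha=\cS_\alpha\cap\cC_{\omega^{\alpha\cdot\omega}}$, whereas you first reduce $\cC_\alpha=\cC_{\omega^\eta}$ (the same observation the paper makes before Theorem~\ref{thm:universal-ca}) and then assemble $\cC_{\omega^\eta}=\cA_1\cap\cA_1^*$ from the underlying dualization theorem of~\cite{Dodos} together with Bossard's Borelness of $\{X:\,\sz(X)\leq\xi\}$; this is equivalent in substance (the form of Dodos's theorem you invoke is indeed the main result of the cited preprint, of which the paper's $\cS_\alpha$ statement is a corollary), and it has the mild virtue of making explicit that the hypothesis $\cA_1\subseteq\cS\cD$ needed for Dodos's theorem is supplied by the reflexivity built into the $\cC_{\omega^{\eta\cdot\omega}}$ factor.
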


\begin{proof}
  Fix a countable ordinal $\alpha$. We begin by showing that the class
  $\cC_{\omega^{\alpha\cdot\omega}}$ is analytic. By
  Theorem~\ref{thm:universal-ca2} if
  $X\kin\cC_{\omega^{\alpha\cdot\omega}}$, then there exists
  $n\kin\bn$ such that $X$ isomorphically embeds into $Z_{\alpha,n}$
  which we denote by $X\khookrightarrow Z_{\alpha,n}$. Conversely,
  assume that $X\khookrightarrow Z_{\alpha,n}$. Since $Z_{\alpha,n}$
  has an FDD satisfying subsequential $(\di{T}{*}{\alpha\cdot
  n},T_{\alpha\cdot n})$ estimates, it follows easily that $X$
  satisfies subsequential $(\di{T}{*}{\alpha\cdot n},T_{\alpha\cdot
  n})$-tree estimates. By duality the same holds for $X^*$, and hence
  $X^*$ also embeds into $Z_{\alpha,n}$. From
  Proposition~\ref{prop:upper-tsirelson-upper-szlenk} we now obtain
  \[
  \max\{ \sz(X),\sz(X^*)\} \leq \sz(Z_{\alpha\cdot n}) \leq
  \omega^{\alpha\cdot n}\ ,
  \]
  and so $X\kin\cC_{\omega^{\alpha\cdot\omega}}$.
  
  It is well known and easy to show that for any $Y\kin\mathrm{SB}$
  the set $\{X\kin\mathrm{SB}:\,X\khookrightarrow Y\}$ is analytic. It
  follows that
  \[
  \cC_{\omega^{\alpha\cdot\omega}} = \bigcup _{n\in\bn} \{
  X\kin\mathrm{SB}:\, X\khookrightarrow Z_{\alpha,n}\}
  \]
  is analytic, as claimed.
  
  To prove the general case, we use a recent result of
  P.~Dodos~\cite{Dodos} which states that
  \[
  \cS_\alpha = \{ X\kin\mathrm{SB}:\,\max\{\sz(X),\sz(X^*)\}\kleq\alpha
  \}
  \]
  is analytic.
  Since $\cC_\alpha\keq \cS_\alpha\cap
  \cC_{\omega^{\alpha\cdot\omega}}$, it follows immediately that
  $\cC_\alpha$ is also analytic.
\end{proof}

\begin{rem}
  As mentioned in the Introduction,it was C.~Rosendal who pointed out
  to us that the analyticity of $\cC_{\omega^{\alpha\cdot\omega}}$
  follows from our results. Later P.~Dodos informed us that this fact
  together with his result implies the general case.
\end{rem}

\def\cprime{$'$}

\vspace{4ex}

\noindent
\parbox[t]{0.5\textwidth}{%
  E. Odell\\
Department of Mathematics,\\
The University of Texas,\\
1 University Station C1200,\\
Austin, TX 78712, USA\\
\texttt{odell@math.utexas.edu}}
\parbox[t]{0.5\textwidth}{%
Th. Schlumprecht\\
Department of Mathematics,\\
Texas A\&M University,\\
College Station, TX 78712\\
\texttt{schlump@math.tamu.edu}}

\vspace{3ex}

\noindent
\parbox[t]{\textwidth}{%
A. Zs\'ak\\
School of Mathematical Sciences\\
University of Nottingham\\
University Park\\
Nottingham NG7 2RD, England\\
\texttt{andras.zsak@maths.nottingham.ac.uk}}

\end{document}